\newtheorem{theorem}{Theorem}[section]
\newtheorem{lemma}[theorem]{Lemma}
\theoremstyle{definition}
\newtheorem{example}[theorem]{Example}
\theoremstyle{remark}
\newtheorem{remark}[theorem]{Remark}
\numberwithin{equation}{section}
\let\bs\boldsymbol
\newcommand{\K}{\mathbb{K}}
\newcommand{\R}{\mathbb{R}}
\newcommand{\N}{\mathbb N}
\newcommand{\bD}{\mathbf D}
\newcommand{\eps}{\varepsilon}
\newcommand{\lraup}{\relbar\joinrel\rightharpoonup}
\newcommand{\llangle}{\langle\!\langle}
\newcommand{\rrangle}{\rangle\!\rangle}
\newcommand{\un}{\bs u_{1n}}
\newcommand{\unn}{\bs u_{2n}}
\begin{document}
	
	\title[]{\large Variational and Quasi-variational solutions 
		to thick flows}
	
	\author[]{Jos\'e Francisco Rodrigues}
	\address{CEMsUL -- Departamento de Matem\'atica, Faculdade de Ci\^encias, Universidade de Lisboa
		P-1749-016 Lisboa, Portugal}
	\email{jfrodrigues@ciencias.ulisboa.pt}
	
	\author[]{Lisa Santos} 
	\address{CMAT, Escola de Ci\^encias, Universidade do Minho, Campus de Gualtar, 4710-057 Braga, Portugal}
	\email{lisa@math.uminho.pt}
	
	\vspace{6mm}
	
	\begin{abstract}
		We formulate the flow of thick fluids as evolution variational and quasi-variational inequalities, with a variable threshold on the absolute value of the deformation rate tensor. In the variational case, we show the existence and uniqueness of strong and weak solutions in the viscous case and also the existence of strong and weak solutions in the inviscid case. These problems correspond to solve, respectively, the Navier-Stokes and the Euler equations with an additional generalised Lagrange multiplier associated with the threshold on the deformation rate tensor. Applying the continuous dependence of strong and weak solutions to the variational inequalities for the Navier-Stokes with constraints on the derivatives, and on their respective generalised Lagrange multipliers, we can solve the case of the variable threshold depending on the solution itself that correspond to quasi-variational problems.
		
		\vspace{2mm}
		
		$$ \text{Dedicated to Vsevolod Alekseevich Solonnikov, {\em in memoriam}}$$
	\end{abstract}
	
	\maketitle

	\keywords{Euler equations, Variational and quasi-variational inequalities, Strong and weak solutions, Lagrange multipliers }
	
	\section{Introduction}
	
	Several critical state physical problems lead to mathematical models, for instance in elastoplasticity, certain type-II superconductors, sandpiles growth, etc., with constraints of gradient type. Those models correspond to the class of unilateral problems in mathematical physics that can be formulated as variational inequalities (see, the recent survey \cite{RodriguesSantos2019} and its bibliography). In non-Newtonian flows, the limit case of shear thickening fluids, called thick fluids in \cite{Rodrigues2013}, where references can be found, the velocity vector field $\bs u=\bs u(x,t)$ is subjected to a constraint on the shear rate, i.e., its $\bD\bs u= \frac{1}{2}(\nabla \bs u+\nabla^T\bs u)$ satisfies the following bound 
	\begin{equation}
		\label{PSI}
		|\bD\bs u|\le\psi,
	\end{equation} 
	for a positive and bounded function  $\psi=\psi(x,t)$. The analysis and a numerical scheme for the stationary case of Stokes flow with $\psi=1$ was considered in \cite{DS2012}.  The more general case of a given function  $\psi(x,t)$ was considered for the classical Navier-Stokes equations in \cite{Rodrigues2013} and for a non-Newtonian variant in  \cite{MirandaRodrigues2014}. When the shear rate is strictly below the threshold $\psi$, the equations for the fluid flow are those classical equations. If the shear rate is attained, i.e., when $|\bD\bs u|=\psi$, the viscosity coefficient in general has a discontinuous change, corresponding to the existence of a Lagrange multiplier associated to the constraint \eqref{PSI} in the respective evolution variational inequality with a time dependent convex set of gradient type \cite{AzevedoRodriguesSantos2020}. In certain physical situations the threshold may implicitly depend on the solution itself, i.e., $\psi=\psi[\bs u]$ depends on $\bs u$ through some functional $\Psi$. Hence, the mathematical problem becomes an evolutionary quasi-variational inequality (QVI). The QVI case restricted to the Stokes flow of thick non-Newtonian fluids falls in the general framework of \cite{MirandaRodriguesSantos2020}, which Example 3.2.4 provides weak solutions by a Schauder fixed point argument. Also in the Stokes case, under certain restrictions, as in the framework of a contraction argument of Section 3.5 of \cite{RodriguesSantos2019}, it is possible to get unique strong quasi-variational solutions. In this work, we extend and develop those results to the general situation of convective flows of viscous thick fluids, as well as in the limit case of inviscid thick flows. We consider strong and weak solutions and we extend the existence of generalised Lagrange multipliers \cite{AzevedoRodriguesSantos2020}.
	
	In a bounded domain $\Omega$ of $\R^d$, $d\geq2$, with Lipschitz boundary $\partial\Omega$, the velocity field $\bs u=\bs u_\nu(x,t)$ of the incompressible thick fluid, with the viscosity $\nu>0$ or inviscid with $\nu=0$, divides, in general, the domain $Q_T=\Omega\times(0,T)$, for $T>0$, into two subregions  
	$$ \big\{(x,t): |\bD\bs u|<\psi[\bs u] \big\} \quad \text{ and } \quad \big\{(x,t): |\bD\bs u|=\psi[\bs u] \big\},$$
	satisfying formally the system
	\begin{align}
		\begin{cases}
			\label{IncompressibleDilatantFluidEq}
			&\partial _t\bs u -\nabla\cdot\big((\nu+\lambda_\nu[\bs u] )\bD\bs u-\bs u \otimes\bs u\big)+\nabla\pi  =\bs f,\\
			&\nabla\cdot\bs u  =0, \\
			&\lambda_\nu[\bs u]=0\ \text{ in }\big\{|\bD\bs u|<\psi[\bs u]\big\} \quad \text{ and } \quad\lambda_\nu[\bs u]\geq 0\text{ in }\big\{|\bD\bs u|=\psi[\bs u]\big\},
		\end{cases}
	\end{align}
	with initial condition $\bs u(x,0)=\bs u^0(x)$ in $\Omega$. 
	
	Here $\pi=\pi(x,t)$  denotes the pressure of the fluid and $\lambda_\nu[\bs u]=\lambda_\nu(x,t)$ is a generalised Lagrange multiplier. In the first subregion, were the shear rate $|\bD\bs u|$ is strictly below the threshold $\psi$, the velocity $\bs u$ solves the classical Navier-Stokes equation when $\nu>0$ and the Euler equation when $\nu=0$.
	
	Under general conditions, the solution of \eqref{IncompressibleDilatantFluidEq}
	is also a weak solution to the following  quasi-variational inequality, i.e. $\bs u=\bs u_\nu\in \mathscr V_2\cap C\big([0,T];\bs L^2(\Omega)\big)\cap \bs L^4(Q_T)$ is such that, for each $\nu\ge0$,
	\begin{equation} 
		\label{QIVf_i}
		\begin{cases}
			\bs u\in\ \mathscr K_{\psi[\bs u]},\\
			\displaystyle	\int_0^T\langle\partial_{t}\bs v,\bs v-\bs u\rangle
			+\nu\int_{Q_T}\bD\bs u:\bD(\bs v-\bs u)
			-\int_{Q_T}(\bs u\otimes\bs u):\nabla(\bs v-\bs u)\\
			\hfill{\displaystyle	\ge\int_{Q_T}\bs f\cdot(\bs v-\bs u)-\tfrac12\int_\Omega |\bs v(0)-\bs u^0|^2,\qquad \forall\,\bs v\in\mathscr K_{\psi[\bs u]}\cap\mathscr W_2 .}
		\end{cases}
	\end{equation}
	
	We denote vector functions and vector spaces of vector functions by bold symbols, in particular, for the usual Sobolev spaces $\bs W^{1,p}_0(\Omega)=W^{1,p}_0(\Omega)^d$, for $1\le p\le\infty$, and, whenever there exists no confusion, the norms in $L^p$ or $\bs L^p$ in $\Omega$ and in $Q_T=\Omega\times(0,T)$ will be simply denoted by $\|\,\cdot\,\|_{_p}$. We set, for $1\le p\le\infty,$
	\begin{equation*}\label{spaces}
		\mathbb H=\big\{\bs v\in\bs L^2(\Omega): \nabla\cdot \bs v=0\big\}, \qquad \mathbb V_p=\big\{\bs v\in\bs W^{1,p}_0(\Omega):\nabla\cdot \bs v=0\big\},
	\end{equation*}
	\begin{equation*}
		\mathscr V_p=\, L^p\big(0,T;\mathbb V_p\big), \qquad \mathscr V_\infty^\cap=L^\infty\big(0,T;\bigcap_{1<p<\infty}\mathbb V_p\big),
	\end{equation*}
	with $\langle\,\cdot\,,\,\cdot\,\rangle$ denoting the duality pairing between $\mathbb V_2'$ and  $\mathbb V_2$ and
	\begin{equation}\label{w2}
		\mathscr W_2=\big\{\bs v\in \mathscr V_2:\partial_t\bs v\in\mathscr V_2'\big\}\subset C\big([0,T];\bs L^2(\Omega)\big). 
	\end{equation}
	\begin{equation}\label{ConvexSetWeak}
		\mathscr K_\psi=\big\{\bs v\in \mathscr V_2:|\bD\bs v|\le\psi\text{ a.e. } (x,t)\in Q_T\big\}.
	\end{equation}
	
	When the strictly positive threshold $\psi$ has a bounded time derivative, in certain cases it is also possible to guarantee that the solution has a time derivative $\partial_{t}\bs u$ in $L^2(Q_T)$ and, therefore, it is also a strong solution to the quasi-variational inequality: to find 
	$\bs u\in \mathscr V_2\cap H^1\big(0,T;\mathbb H\big)\cap \bs L^4(Q_T)$, such that
	\begin{equation} \label{IVi}
		\begin{cases}
			\bs u =\bs u(t)\in\K_{\psi[\bs u](t)}\ \text{for}\ t\in(0,T),\ \bs u(0)=\bs u^0,\\
			\displaystyle\int_{\Omega}\partial_{t}\bs u\cdot(\bs w-\bs u)(t)
			+\nu\!\int_{\Omega}\bD\bs u:\bD(\bs w-\bs u)(t)
			-\displaystyle\int_{\Omega}(\bs u\otimes\bs u):\nabla(\bs w-\bs u)(t)\\
			\hfill{\displaystyle	\ge\int_{\Omega}\bs f\cdot(\bs w-\bs u)(t),\qquad\forall\,\bs w\in\K_{\psi[\bs u](t)},\ \text{a.e.}\ t\in(0,T)},
		\end{cases}
	\end{equation} 
	where, for a.e. $t\in(0,T)$, we define now the closed convex set for the threshold $\psi=\psi(x,t)$ by
	\begin{equation} \label{ConvexSet-t}
		\K_{\psi(t)}=\big\{\bs w\in\mathbb V_2:|\bD\bs w(x)|\leq\psi(x,t)\ \text{ a.e.} \ x\in \Omega\big\}.
	\end{equation}
	
	In this work, by solving these quasi-variational inequalities we obtain weak and strong solutions to thick flows. We start, in Section 2, by recalling and developing the results on strong  solutions to the variational inequality from \cite{Rodrigues2013}, i.e., when in \eqref{IVi} the smoother threshold is prescribed with a bounded time derivative and does not depend on the solution, in order to obtain explicit continuous dependence estimates for the case $\nu>0$. These estimates may be used to prove the existence of weak solutions and, with the fixed points arguments, to solve the quasi-variational inequalities, in particular, in defining explicit conditions on the data for the contraction case.
	
	The existence of weak solutions to the variational inequality with the additional convection term in both cases, $\nu>0$ and $\nu=0$, when the threshold $\psi$ is not differentiable in time, can be easily approximated by strong solutions with a suitable regularisation of the threshold. However, the proof of its uniqueness in the case $\nu>0$, which is also a new result and is presented in Section 3, requires an extra effort with respect to \cite{Kenmochi2013}, \cite{FukaoKenmochi2013} and \cite{MirandaRodriguesSantos2020}, on previous results on uniqueness of weak solutions to variational inequalities with time dependent convex sets. As a consequence, the explicit continuous dependence results with respect to the threshold are still valid for weak variational solutions to \eqref{QIVf_i} when $\Psi[\bs u]\equiv\psi$, with $\nu>0$, extending \cite{MirandaRodriguesSantos2020} to the case with the additional convection term when the threshold $\psi$ is only continuous in time and independent of the solution.
	
	In Section 4 we develop the results on multipliers for the evolution variational inequality \eqref{QIVf_i} with $\Psi[\bs u]\equiv\psi$, extending the results of \cite{AzevedoRodriguesSantos2020} to weak solutions with thresholds $\psi$ only continuous in time and with the additional convective term for variational solutions to thick flows in both situations, the viscous case $\nu>0$ and the inviscid case with $\nu=0$, which can be regarded as an asymptotic limit when $\nu\rightarrow0$. We observe that there is no singular perturbation in this asymptotic limit since the constraints imply the solutions are always in the space $\mathscr V_2=\, L^2\big(0,T;\mathbb V_2\big)$.
	
	Finally for the nondegenerate case $\nu>0$, in Section 5, we present some new results on the existence of weak solutions to the evolution quasi-variational inequality \eqref{QIVf_i} under a general assumption on the compactness of the functional relation $\Psi$ between the solution $\bs u$ and the threshold  $\psi[\bs u]$. We present two types of examples for $\Psi$, one with a general integral operators and another involving a linear parabolic equation with convection given by $\bs u$. Under a Lipchitz assumption on the functional $\Psi$, the explicit continuous dependence estimates on the variational solutions with respect to the variation of the threshold allow the application of a contraction argument to show the existence of unique weak and strong quasi-variational solutions, for small data or for small times with arbitrary data. Although these examples are borrowed from Section 3 of \cite{MirandaRodriguesSantos2020} and from paragraphs 3.5 and 3.6 of \cite{RodriguesSantos2019}, the results here are more general, because the additional nonlinear convective term $\bs u\otimes\bs u$.
	
	\section{Strong solutions to variational inequalities}
	
	We recall that, by Sobolev and Korn's inequalities, there exist $k_p$ and $K_p$ positive constants, depending only on $\Omega$ and $1<p<\infty$, such that
	\begin{equation}\label{PK}
		\|\bs v\|_{_q}\le k_p\|\nabla\bs v\|_{_p}\le K_p\|\bD\bs v\|_{_p} \qquad\forall\bs v\in\bs W^{1,p}_0(\Omega),
	\end{equation}
	where $q\in[1,p^*]$ being $p^*$ the critical Sobolev exponent, i.e.,  $p^*=dp/(d-p)$ if $p<d$, $p^*<\infty $ if $p=d$ and $p^*=\infty$ if $p>d$ (see, for instance, \cite{MNRR1996}).
	
	Let $\psi:Q_T\longrightarrow\R^+$ be a given function satisfying
	\begin{equation} \label{*}
		0<\psi_*\leq\psi(x,t)\leq \psi^* \quad\text{a.e. } (x,t)\in Q_T
	\end{equation}
	and
	\begin{equation} \label{psi}
		\psi\in W^{1,\infty}\big(0,T;L^\infty(\Omega)\big)
	\end{equation}
	or
	\begin{equation} \label{psiWeak}
		\psi\in C\big([0,T];L^\infty(\Omega)\big).
	\end{equation}
	In all this work we also assume that \begin{equation} \label{assumptions}
		\bs f\in\bs L^2(Q_{T})\quad\text{and}\quad \bs u^0\in\K_{\psi(0)},
	\end{equation}
	where the nonempty closed convex set $\K_{\psi(0)}$ is given by \eqref{ConvexSet-t} at $t=0$. We observe that, by \eqref{PK}, $\K_{\psi(t)}\subset\mathbb V_p$, for all $p<\infty$ and  for all $t\in[0,T]$, but not for $p=\infty$. Similarly, $\mathscr K_\psi\subset L^\infty\big(0,T;\mathbb V_p\big)$, for all $p<\infty$, i.e. $\mathscr K_\psi\subset \mathscr V_\infty^\cap $ but $\mathscr K_\psi\not\subset\mathscr V_\infty$.
	
	First we present a result on the existence of strong solution  for $\nu\ge0$.
	\begin{theorem}\label{1}
		Assuming \eqref{*}, \eqref{psi}, \eqref{assumptions} and for each $\nu\ge0$, there exists a strong solution $\bs u\in  H^1\big(0,T;\mathbb H\big)\cap \mathscr V_\infty^\cap$ of the evolution variational inequality
		\begin{equation} \label{IV}
			\begin{cases}
				\bs u =\bs u(t)\in\K_{\psi(t)}\ \text{for}\ t\in(0,T),\ \bs u(0)=\bs u^0,\\
				\displaystyle\int_{\Omega}\partial_{t}\bs u(t)\cdot\big(\bs w-\bs u(t)\big)
				+\nu\!\int_{\Omega}\bD\bs u:\bD\big(\bs w-\bs u(t)\big)-\displaystyle\int_{\Omega}\big(\bs u(t)\otimes\bs u(t)\big):\nabla\big(\bs w-\bs u(t)\big)\\
				\hspace{5cm}	\displaystyle	\ge\int_{\Omega}\bs f(t)\cdot\big(\bs w-\bs u(t)\big),\forall\,\bs w\in\K_{\psi(t)},\ \text{a.e.}\ t\in(0,T).
			\end{cases}
		\end{equation} 
		Besides, the solution is unique when $\nu>0$.
	\end{theorem}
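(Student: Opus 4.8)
The plan is to obtain the strong solution by a penalization/regularization scheme followed by passage to the limit, handling the viscous and inviscid cases together where possible. First I would replace the constraint $|\bD\bs u|\le\psi$ by a penalty term in the equation, introducing a parameter $\eps>0$ and solving the regularized problem
\begin{equation*}
	\partial_t\bs u_\eps-\nabla\cdot\Big(\big(\nu+\tfrac1\eps(|\bD\bs u_\eps|-\psi)^+\big)\bD\bs u_\eps-\bs u_\eps\otimes\bs u_\eps\Big)+\nabla\pi_\eps=\bs f,
\end{equation*}
in the divergence-free setting, with $\bs u_\eps(0)=\bs u^0$. The penalized nonlinear parabolic system admits a solution by a Galerkin method: I would project onto a basis of $\mathbb V_2$, solve the resulting ODE system (the penalty and convection nonlinearities are locally Lipschitz, giving local existence), and extend globally via a priori estimates. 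The case $\nu=0$ should be included by keeping the penalty monotone and coercive on its own; the point emphasized in the introduction is that the constraint itself forces $\bs u\in\mathscr V_2$, so there is no singular perturbation and one can treat $\nu\ge0$ uniformly.

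The key step is to derive a priori estimates uniform in $\eps$. Testing the Galerkin system with $\bs u_\eps$ gives, after using $\int_\Omega(\bs u_\eps\otimes\bs u_\eps):\nabla\bs u_\eps=0$ (the convective term vanishes on divergence-free fields), control of $\|\bs u_\eps\|_{L^\infty(0,T;\bs L^2)}$, of $\nu\|\bD\bs u_\eps\|_2^2$, and crucially a bound on the penalty energy $\tfrac1\eps\int_{Q_T}\big((|\bD\bs u_\eps|-\psi)^+\big)^2$, yielding $|\bD\bs u_\eps|\le\psi+C\sqrt\eps$ and hence, via \eqref{PK}, bounds in every $\mathscr V_p$, $p<\infty$, so that $\bs u_\eps$ stays essentially in $\mathscr V_\infty^\cap$. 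To get the time derivative in $H^1(0,T;\mathbb H)$ I would differentiate the regularized equation in time, or equivalently test with $\partial_t\bs u_\eps$, using hypothesis \eqref{psi} that $\partial_t\psi\in L^\infty$ to absorb the terms coming from the moving constraint; this is where $\psi\in W^{1,\infty}(0,T;L^\infty(\Omega))$ is essential and gives the bound on $\|\partial_t\bs u_\eps\|_{L^2(Q_T)}$. With these estimates I would extract a subsequence converging weakly-$*$ in the relevant spaces and, by Aubin--Lions compactness, strongly in $\bs L^2(Q_T)$ and in $\bs L^4(Q_T)$; the penalty bound forces the weak limit $\bs u$ into $\mathscr K_\psi$, i.e.\ $|\bD\bs u|\le\psi$. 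Passing to the limit in the penalized inequality $\int_\Omega\tfrac1\eps(|\bD\bs u_\eps|-\psi)^+\bD\bs u_\eps:\bD(\bs w-\bs u_\eps)\ge0$ for $\bs w\in\K_{\psi(t)}$ (the penalty term has the right sign by monotonicity) recovers the variational inequality \eqref{IV}.

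The main obstacle I expect is the passage to the limit in the two nonlinear terms under only weak convergence of $\bD\bs u_\eps$. For the convection term $\bs u_\eps\otimes\bs u_\eps$ the strong $\bs L^4(Q_T)$ convergence from Aubin--Lions suffices to pass to the limit in $\int_{Q_T}(\bs u_\eps\otimes\bs u_\eps):\nabla(\bs w-\bs u_\eps)$, but the diffusion and penalty terms require a monotonicity (Minty-type) argument, since the strong convergence of $\bD\bs u_\eps$ is not available \emph{a priori}. The standard device is to keep $\bD\bs u_\eps$ only in the test direction $\bs w-\bs u$ and exploit lower semicontinuity together with the sign of the penalty, rather than identifying the penalty limit explicitly; this is what produces an inequality rather than an equation and delivers the Lagrange-multiplier structure implicitly.

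For uniqueness when $\nu>0$, I would take two solutions $\bs u_1,\bs u_2$, test the inequality for $\bs u_1$ with $\bs w=\bs u_2(t)$ and vice versa (admissible since both lie in the same $\K_{\psi(t)}$, the threshold being prescribed and independent of the solution), and add. The diffusion terms combine into $-\nu\|\bD(\bs u_1-\bs u_2)\|_2^2$, giving strict dissipation; the troublesome term is the difference of convective contributions $\int_\Omega(\bs u_1\otimes\bs u_1-\bs u_2\otimes\bs u_2):\nabla(\bs u_1-\bs u_2)$, which I would estimate by interpolation and Young's inequality, absorbing part into the $\nu$-coercive term and leaving a Gronwall-integrable remainder in $\|\bs u_1-\bs u_2\|_2^2$. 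Here the viscosity $\nu>0$ is indispensable to control the convection, which explains why uniqueness fails (or is not claimed) in the inviscid case $\nu=0$; concluding by Gronwall's inequality with the matching initial data $\bs u_1(0)=\bs u_2(0)=\bs u^0$ then gives $\bs u_1=\bs u_2$.
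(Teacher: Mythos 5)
Your overall skeleton (penalize the constraint, get $\eps$-uniform estimates including an $L^2$ bound on $\partial_t\bs u_\eps$ by testing with the time derivative and using \eqref{psi}, pass to the limit using the sign of the penalty, prove uniqueness for $\nu>0$ by cross-testing and Gr\"onwall with the convection absorbed by viscosity) is the same as the paper's, and your uniqueness argument is essentially identical to the paper's continuous-dependence proof (Theorem \ref{cd}). However, there is a genuine gap in the core of the existence argument. You claim that the penalty-energy bound $\tfrac1\eps\int_{Q_T}\big((|\bD\bs u_\eps|-\psi)^+\big)^2\le C$ yields the \emph{pointwise} bound $|\bD\bs u_\eps|\le\psi+C\sqrt\eps$, and from it uniform bounds in every $\mathscr V_p$. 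This is false: an integral bound on the excess never gives an almost-everywhere bound (and what testing with $\bs u_\eps$ actually controls is $\tfrac1\eps\int(|\bD\bs u_\eps|-\psi)^+|\bD\bs u_\eps|^2$). Without those high-order gradient bounds, your compactness step collapses in general dimension: from $L^\infty(0,T;\bs L^2)\cap L^2(0,T;\mathbb V_2)$ plus $\partial_t\bs u_\eps\in\bs L^2(Q_T)$, Aubin--Lions gives strong convergence in $\bs L^2(Q_T)$, but the embedding only reaches $L^{2(d+2)/d}(Q_T)$, which is below $L^4$ for $d\ge3$; so the strong $\bs L^4(Q_T)$ convergence you invoke for the convective term is unjustified precisely when $d\ge 3$. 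This is the reason the paper does not use the naive penalty: it adds the extra term $\eps|\bD\bs u_\eps|^{q-2}\bD\bs u_\eps$ with $q>\max\{d+1,4\}$, and the delicate point of its proof is the $\eps$-uniform estimate $\|\bD\bs u_\eps\|_{q-1}\le C$, obtained by showing the set $A_\eps=\{|\bD\bs u_\eps|^2-\psi^2\ge\tfrac1\eps\}$ has exponentially small measure; this gives compactness in $L^{q-1}\big(0,T;\bs C(\overline\Omega)\big)$ by Simon's theorem, which handles $\bs u_\eps\otimes\bs u_\eps$ in every dimension since $q-1\ge4$ and $q-1>d$.

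The second gap is the inviscid case. Treating $\nu=0$ "uniformly" inside the penalization makes matters worse, not better: with $\nu=0$ you lose even the $L^2$ gradient bound, and the penalty alone only yields an $L^3$-type gradient estimate, so the compactness problem above becomes more severe. The paper's route is different and cleaner: first solve the variational inequality for every $\nu>0$; these solutions satisfy the constraint exactly, hence $|\bD\bs u_\nu|\le\psi^*$ \emph{pointwise} uniformly in $\nu$, and the estimate \eqref{dt} on $\partial_t\bs u_\nu$ is also uniform in $\nu$; then one lets $\nu\to0$, using Simon compactness in $\bs C(\overline Q_T)$ and lower semicontinuity (as in \eqref{cl}) to keep the limit in the convex set. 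You would need to restructure your argument along these lines (or otherwise supply dimension-independent gradient bounds for the penalized problem) for the proof to go through as stated for all $d\ge2$ and for $\nu=0$.
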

	\begin{proof} We treat first the case $\nu>0$.
		
		We approximate the variational inequality using a regularization and penalization, letting  $0<\eps<1$. We start by fixing $q>\max\{d+1,4\}$ such that $\frac{q-1}2\in\N\setminus\{1\}$ and we define the scalar function $ k_\eps$  as follows: $ k_\eps=0$ when $s\le 0$, $ k_\eps=e^\frac{s}\eps-1$ when $0< s<\frac1\eps$ and $ k_\eps=e^\frac{1}{\eps^2}-1$ when $s\ge\frac1\eps$. We consider the approximating problem
		\begin{equation}\label{app}
			\begin{cases}
				\displaystyle\int_{\Omega}\partial_t\bs u_\eps(t)\cdot\bs\varphi+\int_\Omega  \Big(k_\eps\big(|\bD\bs u_\eps(t)|^2-\psi^2(t)\big)+\nu\Big)\bD\bs u_\eps(t):\bD\bs\varphi\\
				\hfill{\hspace{2cm}\displaystyle-\int_\Omega\big(\bs u_\eps(t)\otimes\bs u_\eps(t)\big):\nabla\bs\varphi+\eps\int_\Omega|\bD\bs u_\eps(t)|^{q-2}\bD\bs u_\eps(t):\bD\bs\varphi}\\
				\hfill{\displaystyle=\int_\Omega\bs f(t)\cdot\bs\varphi,\quad\forall\bs\varphi\in\mathbb V_2},\\
				\bs u_\eps(0)=\bs u^0\quad\text{ on }\Omega.
			\end{cases}
		\end{equation}
		The necessity of introducing the term $\eps|\bD\bs u_\eps(t)|^q\bD\bs u_\eps(t)$ in the definition of the approximating problem is related with the truncation of the real function $ k_\eps$, for $s\ge\frac1\eps$. This truncation implies that we lose the control of $|\bD\bs u_\eps|$ in the region $\{|\bD\bs u_\eps|^2>\psi^2+\frac1\eps\}$.
		
		Applying a known result (see \cite[Theorem 1.2, p. 162]{Lions1969}), problem \eqref{app} has a unique solution. We intend to obtain sufficient {\em a priori} estimates independent of $\eps$ that allow us, for each fixed $\nu>0$, to pass to the limit when $\eps\rightarrow0$. 
		
		Taking $\bs u_\eps(t)$ as test function in \eqref{app}, setting $\widehat k_\eps= k_\eps(|\bD\bs u_\eps(t)|^2-\psi^2(t))$ and $Q_t=\Omega\times(0,t)$, integrating in time  we have
		$$\tfrac12\int_\Omega|\bs u_\eps(t)|^2+\int_{Q_t}(\widehat k_\eps+\nu)|\bD\bs u_\eps|^2+\eps\int_{Q_t}|\bD\bs u_\eps|^q=\int_{Q_t}\bs f\cdot\bs u_\eps+\tfrac12\int_\Omega|\bs u^0|^2,$$
		from where we immediately obtain the estimates (being $C$ different positive constants independent of $\eps$ and $\nu$)
		\begin{equation}\label{est1}
			\|\bs u_\eps\|_{L^\infty(0,T;\bs L^2(\Omega))}\le C,\qquad \|\bD\bs u_\eps\|^q_{_q}\le\tfrac{C}{\eps}
		\end{equation}
		and 
		\begin{equation}\label{est2}
			\|\widehat k_\eps|\bD\bs u_\eps|^2\|_{_1}\le C,\qquad\|\widehat k_\eps\|_{_1}\le C,
		\end{equation}
		being the second estimate of \eqref{est2} a consequence of the first one, by noting that, as $\widehat k_\eps(\psi^2-|\bD\bs u_\eps|^2)\le0$, then
		\begin{align*}
			\int_{Q_T}\widehat k_\eps&\le\tfrac1{(\psi_*)^2}\int_{Q_T}\widehat k_\eps\psi^2=\tfrac1{(\psi_*)^2}\int_{Q_T}\widehat k_\eps(\psi^2-|\bD\bs u_\eps|^2)+\tfrac1{(\psi_*)^2}\int_{Q_T}\widehat k_\eps|\bD\bs u_\eps|^2\\
			&\le\tfrac1{(\psi_*)^2}\int_{Q_T}\widehat k_\eps|\bD\bs u_\eps|^2\le C.
		\end{align*}
		
		Setting $A_\eps=\{(x,t)\in Q_T:|\bD \bs u_\eps(x,t)|^2-\psi^2(x,t)\ge\frac1\eps\}$ and using \eqref{est2}, we get
		$$(e^\frac1{\eps^2}-1)|A_\eps|=\int_{A_\eps}\widehat k_\eps\le\int_{Q_T}\widehat k_\eps\le C,$$
		concluding that 
		\begin{equation}\label{maeps}
			|A_\eps|\le C/(e^\frac{1}{\eps^2}-1).
		\end{equation}
		But then, setting $r=\frac{q-1}2$,
		\begin{align*}
			\int_{Q_T}|\bD\bs u_\eps|^{q-1}&=\int_{Q_T\setminus A_\eps}|\bD\bs u_\eps|^{q-1}+\int_{A_\eps}|\bD\bs u_\eps|^{q-1}\\
			&\le 2^{r-1}\Big(\int_{Q_T\setminus A_\eps}\big(|\bD\bs u_\eps|^2-\psi^2\big)^{\frac{q-1}2}+\int_{Q_T\setminus A_\eps}\psi^{q-1}\Big)+\left(\int_{A_\eps}|\bD\bs u_\eps|^q\right)^{\frac{1}{q'}}|A_\eps|^\frac1q\\
			&\le 2^{r-1}\Big(r!\int_{Q_T}\widehat k_\eps+\|\psi\|^{q-1}_{_{q-1}}	\Big)+C\left(\tfrac{1}{\eps}\right)^\frac1{q'}/(e^{\frac{1}{\eps^{2}}}-1)^q,
		\end{align*}
		as, for $s\ge 0$, $e^s-1\ge \frac{s^r}{r!}$, and $C$ depends on the constants given in \eqref{est1} and \eqref{maeps} and then is independent of $\eps$ and $\nu$. So we have the additional estimate
		\begin{equation}\label{est3}
			\|\bD\bs u_\eps\|_{_{q-1}}\le C.
		\end{equation}

		Setting $m_\eps(s)=\displaystyle\int_0^sk_\eps(\tau)d\tau$, we can formally use $\partial_t\bs u_{\eps}$ as test function in \eqref{app}, integrating over $(0,t)$,
		\begin{align}\label{meps}
			\int_{Q_t}|\partial_t\bs u_\eps|^2+&\tfrac12\int_{Q_t}\partial_tm_{\eps}\big(|\bD\bs u_\eps(t)|^2-\psi^2(t)\big)+\tfrac\nu2\int_{Q_t}\partial_t|\bD\bs u_\eps(t)|^2\\
			\nonumber&    +\tfrac\eps{q}\int_{Q_t}\partial_t|\bD\bs u_\eps|^q
			=-\int_{Q_t}\widehat k_\eps\,\psi\,\partial_t\psi+\int_{Q_t}\big(\bs u_\eps\cdot\nabla\big)\bs u_\eps\cdot\partial_t\bs u_\eps+	\int_{Q_t}\bs f\cdot\partial_t\bs u_\eps\\
			\nonumber&\le \|\widehat k_\eps\|_{L^1(Q_t)}\|\psi\|^2_{W^{1,\infty}(0,T;L^\infty(\Omega))}+\Big(\|\bs u_\eps\|_{\frac{2(q-1)}{q-3}}\|\bD\bs u_\eps\|_{_{q-1}}+\|\bs f\|_{_2}\Big)\|\partial_t\bs u_\eps\|_{_2}
		\end{align}
		
		Recall that $m_\eps(s)=s$ if $s\le 0$ and $m_\eps(s)>0$ if $s>0$.  Therefore,
		\begin{align*}
			\int_{Q_t}\partial_tm_{\eps}\big(|\bD\bs u_\eps(t)|^2-\psi^2(t)\big)&=\int_\Omega m_{\eps}\big(|\bD\bs u_\eps(t)|^2-\psi^2(t)\big)-\int_\Omega m_{\eps}(|\bD\bs u^0|^2-\psi^2(0))\\
			&\ge\int_{\{|\bD\bs u_\eps(t)|^2-\psi^2(t)<0\} }\big(|\bD\bs u_\eps(t)|^2-\psi^2(t)\big)+\int_\Omega\big(\psi^2(0)-|\bD\bs u^0|^2\big)\\
			&\ge-\|\psi\|^2_{_\infty}-\|\bD\bs u^0\|^2_{_2}.
		\end{align*}
		Then, after applying Young's inequality to the last term of the inequality \eqref{meps}, we get
		\begin{multline}
			\label{dt}
			\int_{Q_T}|\partial_t\bs u_\eps|^2\le\tfrac{2\eps}{q}\|\bD\bs u^0\|^q_{_q}+ 2\|\widehat k_\eps\|_{_1}\|\psi\|^2_{W^{1,\infty}(0,T;L^\infty(\Omega))}\\
			+\Big(\|\bs u_\eps\|_{\frac{2(q-1)}{q-3}}\|\bD\bs u_\eps\|_{_{q-1}}+\|\bs f\|_{_2}\Big)^2+\big(\|\psi\|^2_{_\infty}+(1+\nu)\|\bD\bs u^0\|^2_{_2}\big)\le C,
		\end{multline}
		where $C$ is a constant independent of $\eps$ and $\nu<N$, using the estimates \eqref{est2} and \eqref{est3} and recalling that, as $q-1>d$, then $\bs u_\eps$ is bounded in $\bs L^\infty(Q_T)$ independently of $\eps$ and $\nu$.
		
		Observe that the set $\{\bs u_\eps:0<\eps<1\}$ is bounded in
		$$\mathcal W=\big\{\bs v\in L^{q-1}\big(0,T;\bs W^{1,q-1}(\Omega)\big): \partial_t\bs v\in \bs L^2(Q_T)\big\}$$
		and $\mathcal W$ is compactly included in $L^{q-1}\big(0,T;\bs C(\overline\Omega)\big)$. In fact, for $0<h<T$,
		\begin{align*}
			\|\bs u_\eps(t+h)-\bs u_\eps(t)\|^2_{\bs L^2(\Omega)}&=\int_\Omega\Big|\int_t^{t+h}\partial_t\bs u_\eps(\tau)d\tau\Big|^2 dx\le\int_\Omega h\int_t^{t+h}|\partial_t\bs u_\eps(\tau)|^2d\tau dx\\
			&\le h\|\partial_t\bs u_\eps\|_{_2}^2\le Ch.
		\end{align*}
		Then, setting $\tau_h\bs v=\bs v(t+h)$, we have
		$$\|\tau_h\bs u_\eps-\bs u_\eps\|_{L^{q-1}(0,T-h;\bs L^2(\Omega))}\le \sqrt{Ch}T^\frac1{q-1}\underset{h\rightarrow0^+}\longrightarrow0$$
		and, setting $\bs X=\bs W^{1,q-1}(\Omega)$, $\bs B=\bs C(\overline\Omega)$ and $\bs Y=\bs L^2(\Omega)$, as $\bs X$ is compactly included in $\bs B$ and $\bs B$ is continuously included in $\bs Y$,  then, by a result of Simon (see \cite[Theorem 5]{Simon1987}), $\{\bs u_\eps:0<\eps<1\}$ is compactly included in $L^{q-1}\big(0,T;\bs C(\overline\Omega))\big)$.
		
		It is now easy to follow the steps in \cite[Theorem 3.1]{Rodrigues2013} and obtain existence of solution to the variational inequality \eqref{IV}, when $\nu>0$. Essentially, by the estimates obtained above, we have the following convergences of a subsequence of $\{\bs u_\eps\}_\eps$, still denoted by $\{\bs u_\eps\}_\eps$:
		\begin{align}\label{convconv}
			&\partial_t \bs u_\eps\underset{\eps\rightarrow0}\lraup\partial_t\bs u\ \text{ in }\bs L^2(Q_T)\text{-weak},\quad\bD\bs u_\eps\underset{\eps\rightarrow0}\lraup\bD\bs u\ \text{ in }\bs L^{q-1}(Q_T)\text{-weak},\\
			\nonumber	&\bs u_\eps\underset{\eps\rightarrow0}\longrightarrow\bs u\ \text{ in }L^{q-1}\big(0,T;\bs C(\overline\Omega))\big).
		\end{align}
		
		Given $\bs w\in\K_{\psi(t)}$, and using $\bs w-\bs u_\eps$ as test function in the approximating problem \eqref{app}, the only difference with the reasoning done in \cite[Theorem 3.1]{Rodrigues2013} is what happens with the additional term $\eps|\bD \bs u_\eps|^{q-2}\bD\bs u_\eps$. But the estimate \eqref{est3} immediately  implies its strong convergence to zero in $\bs L^{q-1}(Q_T)$. 	We note that the last convergence in \eqref{convconv} is enough to pass to the limit when $\eps\rightarrow0$ in the term $\displaystyle\int_{Q_T}\bs u_\eps\otimes\bs u_\eps:\nabla(\bs w-\bs u_\eps)$, since $q-1\ge 4$.  
		The proof of existence of solution of the variational inequality \eqref{IV} is concluded observing that the proof, done in \cite{Rodrigues2013}, of $|\bD\bs u|\le\psi$ a.e. in $Q_T$ remains unchanged and therefore also $\bs u \in \mathscr V_\infty^\cap$. The proof of the uniqueness of the solution is also similar (see Theorem \ref{cd} below). 
		
		Now we treat the case of existence for $\nu=0$.
		
		Along the rest of the proof, $\bs u_\nu$ denotes the unique solution of \eqref{IV} with viscosity parameter $\nu>0$. Now, to avoid confusion, we denote $\bs u_\eps$ by $\bs u_{\nu\eps}$.
		We know from \eqref{dt} that $\|\partial_t\bs u_{\nu\eps}\|_{\bs L^2(Q_T)}\le C$, and so the same is true for $\|\partial_t\bs u_{\nu}\|_{\bs L^2(Q_T)}$. Besides, $|\bD\bs u_\nu|\le\psi^*$ and these estimates are sufficient to pass to the limit, for a subsequence of $\{\bs u_\nu\}_\nu$, when $\nu\rightarrow0$, obtaining a function $\bs u$ such that
		\begin{align}\label{limnu}
			&\partial_t\bs u_\nu\underset{\nu\rightarrow0}\lraup\partial_t\bs u \text{ in }\bs L^2(Q_T)\text{-weak},\qquad \bD\bs u_\nu\underset{\nu\rightarrow0}\lraup\bD\bs u \text{ in }\bs L^\infty(Q_T)\text{-weak*},\\
			\nonumber&\bs u_\nu\underset{\nu\rightarrow0}\longrightarrow\bs u \text{ in }\bs C(\overline Q_T).
		\end{align}
		The last convergence is also a consequence of  \cite[Theorem 5]{Simon1987}, now using the result with $p=\infty$.
		
		Note that, if $\omega$ is any measurable subset of $Q_T$, we have
		\begin{equation}\label{cl}
			\int_\omega|\bD\bs u|\le\varliminf_{\nu\rightarrow0}\int_\omega|\bD\bs u_\nu|\le\int_\omega\psi
		\end{equation}
		concluding that $|\bD\bs u|\le\psi$ a.e. in $Q_T$.
		
		It is now straightforward to see that, using the convergences in \eqref{limnu}, we can let $\nu\rightarrow0$ in the variational inequality \eqref{IV} integrated over $(s,t)$, with $0<s<t<T$, obtaining
		\begin{equation*}
			\int_s^{t}\int_{\Omega}\partial_{t}\bs u\cdot(\bs w-\bs u)
			-\int_s^{t}\int_{\Omega}(\bs u\otimes\bs u):\nabla(\bs w-\bs u)\ge\int_s^{t}\int_{\Omega}\bs f\cdot(\bs w-\bs u),
		\end{equation*}
		being $\bs w\in L^2\big(0,T;\mathbb V_2\big)$ such that $\bs w(\tau)\in\K_{\psi(\tau)}$, for a.e. $\tau\in(s,t)$. As $s$ and $t$ are arbitrary, we conclude  that $\bs u$ solves the variational inequality \eqref{IV} with $\nu=0$.
	\end{proof}
	
	In this work, when $\nu$ is fixed, we denote a solution $\bs u$ of the strong variational inequality \eqref{IV} with data $\bs f$, $\bs u^0$, $\psi$, by $S(\bs f,\bs u^0,\psi)$.
	
	We start with some {\em a priori} estimates.
	
	\begin{lemma}\label{Lapriori}
		Let $\nu\ge0$ and $\bs u=\bs u_\nu=S(\bs f,\bs u^0,\psi)$ be the strong solution of the variational inequality \eqref{IV}, under the assumptions \eqref{*}, \eqref{psi} and \eqref{assumptions}. Then
		\begin{equation}\label{apriori}
			\|\bs u\|_{_2}\le \sqrt T\|\bs u\|_{L^\infty(0,T;\bs L^2(\Omega))}\le  \sqrt Te^{\frac{T}2}\big(\|\bs f\|_{_2}^2+\|\bs u^0\|_{_2}^2\big)^\frac12.
		\end{equation}
		and
		\begin{equation}\label{infty}
			\|\bs u\|_{_\infty}\le K_p\psi^* |\Omega|^{1/p}= M,
		\end{equation}
		being $p>d$ and $K_p$ the constant in \eqref{PK}.
	\end{lemma}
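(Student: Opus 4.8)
The plan is to read off both bounds directly from the variational inequality \eqref{IV}, using that $\bs w=\bs 0$ is an admissible test function and that membership in $\K_{\psi(t)}$ forces a pointwise bound on $\bD\bs u$. The first inequality in \eqref{apriori} is immediate, since
\begin{equation*}
\|\bs u\|_{_2}^2=\int_0^T\|\bs u(t)\|_{\bs L^2(\Omega)}^2\,dt\le T\sup_{t\in(0,T)}\|\bs u(t)\|_{\bs L^2(\Omega)}^2,
\end{equation*}
and taking square roots gives $\|\bs u\|_{_2}\le\sqrt T\,\|\bs u\|_{L^\infty(0,T;\bs L^2(\Omega))}$. For the second inequality I would choose $\bs w=\bs 0$ in \eqref{IV}, which is admissible because $\psi\ge\psi_*>0$ by \eqref{*} guarantees $\bs 0\in\K_{\psi(t)}$; after changing signs this yields, for a.e.\ $t$,
\begin{equation*}
\int_\Omega\partial_t\bs u\cdot\bs u+\nu\int_\Omega|\bD\bs u|^2-\int_\Omega(\bs u\otimes\bs u):\nabla\bs u\le\int_\Omega\bs f\cdot\bs u.
\end{equation*}

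Next I would simplify the left-hand side. Since $\bs u(t)\in\K_{\psi(t)}\subset\mathbb V_2$, the standard identity $\int_\Omega(\bs u\otimes\bs u):\nabla\bs u=\tfrac12\int_\Omega\bs u\cdot\nabla|\bs u|^2=-\tfrac12\int_\Omega(\nabla\cdot\bs u)|\bs u|^2=0$ removes the convective term; the regularity $\bs u\in H^1(0,T;\mathbb H)$ from Theorem \ref{1} legitimizes $\int_\Omega\partial_t\bs u\cdot\bs u=\tfrac12\frac{d}{dt}\|\bs u(t)\|_{\bs L^2(\Omega)}^2$; and the viscous term is nonnegative, so it may simply be discarded, which is exactly what keeps the estimate uniform down to $\nu=0$. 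Bounding the right-hand side by Young's inequality leads to the differential inequality $\frac{d}{dt}\|\bs u(t)\|_{\bs L^2(\Omega)}^2\le\|\bs u(t)\|_{\bs L^2(\Omega)}^2+\|\bs f(t)\|_{\bs L^2(\Omega)}^2$. Grönwall's lemma with initial value $\|\bs u^0\|_{\bs L^2(\Omega)}^2$ then gives, for every $t\le T$, $\|\bs u(t)\|_{\bs L^2(\Omega)}^2\le e^{T}\big(\|\bs u^0\|_{_2}^2+\|\bs f\|_{_2}^2\big)$, and taking the supremum in $t$ and a square root produces the factor $e^{T/2}$ in \eqref{apriori}.

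For \eqref{infty} I would use only the constraint together with the Sobolev--Korn inequality \eqref{PK}. Membership $\bs u(t)\in\K_{\psi(t)}$ gives $|\bD\bs u(x,t)|\le\psi(x,t)\le\psi^*$ a.e., hence $\|\bD\bs u(t)\|_{\bs L^p(\Omega)}\le\psi^*|\Omega|^{1/p}$ for every $p$. Fixing any $p>d$, the critical exponent is $p^*=\infty$, so \eqref{PK} applies with $q=\infty$ and yields $\|\bs u(t)\|_{\bs L^\infty(\Omega)}\le K_p\|\bD\bs u(t)\|_{\bs L^p(\Omega)}\le K_p\psi^*|\Omega|^{1/p}=M$; taking the supremum over $t$ gives $\|\bs u\|_{_\infty}\le M$.

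Both computations are routine, and I do not anticipate a genuine obstacle. The only points requiring care are the admissibility of $\bs w=\bs 0$ (which is precisely where the lower bound in \eqref{*} enters) and the justification of the time-derivative identity and the vanishing of the convective term, both of which follow from the regularity $\bs u\in H^1(0,T;\mathbb H)\cap\mathscr V_\infty^\cap$ asserted in Theorem \ref{1}. The robustness of the first estimate in $\nu$ stems from discarding the nonnegative viscous contribution rather than exploiting it, so the same bound holds verbatim in the inviscid limit $\nu=0$.
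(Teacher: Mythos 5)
Your proof is correct and follows essentially the same route as the paper: testing \eqref{IV} with $\bs w=\bs 0$, noting the convective term vanishes for divergence-free fields in $\bs W^{1,p}_0$, applying Young and Grönwall (you use the differential form, the paper the integral form — an immaterial difference), and deducing \eqref{infty} from the constraint via \eqref{PK} with $p>d$. Your explicit justifications of the admissibility of $\bs 0$, the cancellation $\int_\Omega(\bs u\otimes\bs u):\nabla\bs u=0$, and the identity $\int_\Omega\partial_t\bs u\cdot\bs u=\tfrac12\tfrac{d}{dt}\|\bs u(t)\|^2_{_2}$ are details the paper leaves implicit.
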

	\begin{proof} 
		Using $\bf 0$ as test function in \eqref{IV}  and denoting $Q_t=\Omega\times(0,t)$, we obtain the estimate
		\begin{align}\label{apriori_extra}
			\tfrac12\int_\Omega|\bs u(t)|^2+\nu\int_{Q_t}|\bD\bs u|^2&\le\int_{Q_t}\bs f\cdot\bs u+\tfrac12\int_\Omega|\bs u^0|^2\\
			\nonumber	&\le \tfrac12\int_{Q_t}|\bs f|^2+\tfrac12\int_{Q_t}|\bs u|^2+\tfrac12\int_\Omega|\bs u^0|^2
		\end{align}
		and, applying Gr\"onwall's inequality ($y(t)\le y_0+\displaystyle\alpha \int_0^ty(s)ds$ with $\alpha=1$), we conclude \eqref{apriori}.
		
		As $\bs u\in L^\infty\big(0,T;\mathbb V_p(\Omega)\big)$ for all $1<p<\infty$, using the inequality \eqref{PK} with $p>d$, we have 
		\begin{equation}\label{u_linfty}
			\|\bs u(t)\|_{_\infty}\le K_p\|\bD\bs u(t)\|_{_p}\le K_p\|\psi(t)\|_{_p}\le K_p \psi^* |\Omega|^{1/p}
		\end{equation}
		for a.e. $t\in (0,T)$, by the assumption \eqref{*}, and the conclusion follows.
	\end{proof}
	
	\begin{remark}\label{lambda1} Taking into account the assumption \eqref{*}, the Poincaré inequality $\|\bs v\|_{_2}\le \sqrt{1/\kappa}\|\nabla\bs v\|_{_2}$, with $\kappa=\kappa(\Omega)>0$, and the equality $\|\nabla\bs v\|_{_2}=\sqrt{2} \|D\bs v\|_{_2}$, which are valid for all $\bs v\in \mathbb V_2(\Omega)$, we also have the following estimate for the solution $\bs u_\nu$ of \eqref{IV}, independently of $\bs f$ and $\bs u^0$,
		\begin{equation}\label{apriori2}
			\|\bs u\|_{_2}\le \sqrt{1/\kappa}\|\nabla\bs u\|_{_2}= \sqrt{2/\kappa}\|D\bs u\|_{_2}\le \psi^* \sqrt {2|\Omega|T/\kappa}.
		\end{equation}
	\end{remark}
	Strong continuous dependence results hold for the Navier-Stokes case $\nu>0$. For given data $(\bs f_i,\bs u^0_i,\psi_i)$, for $i=1,2$, both satisfying the assumptions \eqref{*} and \eqref{assumptions}, and using the estimates \eqref{apriori} and \eqref{infty} for the respective strong solutions, we  set the constants
	\begin{small}
		\begin{equation}\label{constants}
			\begin{cases}
				C_1=\tfrac1{\psi_*}\Big(2M^2|\Omega|+2\nu dT|\Omega|\big(\psi^*\big)^2 +M\sqrt {T|\Omega|} \big( \|\bs f_1\|_{_2} +\|\bs f_2\|_{_2}\big)\Big),\vspace{2mm}\\
				C_2=2dM\sqrt {2}|\Omega|T
				\tfrac{\psi^*}{\psi_*}\vspace{2mm}\quad C_3=dM^2/2,\\
				C_*=2(C_1+C_2),\quad C_\nu=1+2C_3/\nu.
			\end{cases}
		\end{equation}
	\end{small}
	
	\begin{theorem}\label{cd}
		For $i=1,2$ and $\nu>0$, let $\bs u_i=S(\bs f_i,\bs u^0_i,\psi_i)$ denote the unique solutions to the strong variational inequality \eqref{IV}. Under the assumptions \eqref{*}, \eqref{psi} and \eqref{assumptions}, the strong solutions $\bs u_i$ of \eqref{IV} satisfy, 
		\begin{equation} \label{Ff}
			\|\bs u_1-\bs u_2\|^2_{L^\infty(0,T;\bs L^2(\Omega))} \le e^{C_\nu T}\Big(\|\bs f_1-\bs f_2\|^2_{_2}+\|\bs u^0_1-\bs u^0_2\|^2_{_2}+C_*\|\psi_1-\psi_2\|_{_\infty}\Big)
		\end{equation}
		and,
		\begin{multline}\label{FF}
			\|\bs u_1-\bs u_2\|^2_{L^\infty(0,T;\bs L^2(\Omega))}+\nu	\|\bD(\bs u_1-\bs u_2)\|^2_{_2}\\
			\leq \big(1+C_\nu 
			T e^{C_\nu T}\big)\Big(\|\bs f_1-\bs f_2\|^2_{_2}
			+\|\bs u^0_1-\bs u^0_2\|^2_{_2}
			+C_*\|\psi_1-\psi_2\|_{_\infty}\Big).
		\end{multline}
	\end{theorem}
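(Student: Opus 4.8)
The plan is to exploit the strict positivity and boundedness \eqref{*} of the thresholds to build, out of each solution, an admissible competitor for the convex set of the other solution, with an error that is linear in $\|\psi_1-\psi_2\|_{_\infty}$. Write $\bs z=\bs u_1-\bs u_2$ and set
$$\theta=\frac{\psi_*}{\psi_*+\|\psi_1-\psi_2\|_{_\infty}}\in(0,1],\qquad \delta=1-\theta\le\frac{\|\psi_1-\psi_2\|_{_\infty}}{\psi_*}.$$
Since $|\bD\bs u_2|\le\psi_2\le\psi_1+\|\psi_1-\psi_2\|_{_\infty}$ and $\theta\|\psi_1-\psi_2\|_{_\infty}=(1-\theta)\psi_*\le(1-\theta)\psi_1$, one checks $\theta|\bD\bs u_2|\le\theta\psi_2\le\psi_1$, so that $\theta\bs u_2\in\K_{\psi_1(t)}$, and symmetrically $\theta\bs u_1\in\K_{\psi_2(t)}$. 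These are genuine divergence-free fields in $\mathbb V_2$, differing from $\bs u_2$ and $\bs u_1$ by $-\delta\bs u_2$ and $-\delta\bs u_1$, i.e. by $O(\|\psi_1-\psi_2\|_{_\infty})$.

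I would then test the variational inequality \eqref{IV} for $\bs u_1$ with $\bs w=\theta\bs u_2$ and the one for $\bs u_2$ with $\bs w=\theta\bs u_1$, and add them. Writing $\theta\bs u_2-\bs u_1=(\bs u_2-\bs u_1)-\delta\bs u_2$ and $\theta\bs u_1-\bs u_2=(\bs u_1-\bs u_2)-\delta\bs u_1$, the principal parts combine into the familiar quantities: the time terms give $-\tfrac12\tfrac{d}{dt}\|\bs z\|_{_2}^2$, the viscous terms give $-\nu\|\bD\bs z\|_{_2}^2$, the forcing gives the principal pairing $\int_\Omega(\bs f_1-\bs f_2)\cdot\bs z$, while every factor $\delta$ produces a remainder of one of the forms $\delta\,\tfrac{d}{dt}\!\int_\Omega\bs u_1\cdot\bs u_2$ (time), $-2\nu\delta\!\int_\Omega\bD\bs u_1\!:\!\bD\bs u_2$ (viscous), or $\delta$-multiples of the convective and forcing pairings. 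Here it is essential that the strong solutions satisfy $\partial_t\bs u_i\in\bs L^2(Q_T)$, so that all these pairings are genuine and the time terms are honest total derivatives.

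The delicate point, and the reason $\nu>0$ is needed exactly as in the uniqueness proof, is the principal convective contribution $\int_\Omega(\bs u_1\otimes\bs u_1-\bs u_2\otimes\bs u_2):\nabla\bs z$. Writing $\bs u_1\otimes\bs u_1-\bs u_2\otimes\bs u_2=\bs u_1\otimes\bs z+\bs z\otimes\bs u_2$ and noting that $\int_\Omega(\bs u_1\otimes\bs z):\nabla\bs z=\tfrac12\int_\Omega\bs u_1\cdot\nabla|\bs z|^2=0$ because $\nabla\cdot\bs u_1=0$ and $\bs z$ vanishes on $\partial\Omega$, this reduces to $\int_\Omega(\bs z\otimes\bs u_2):\nabla\bs z$, which I would bound by $\|\bs u_2\|_{_\infty}\|\bs z\|_{_2}\|\nabla\bs z\|_{_2}\le\sqrt2\,M\|\bs z\|_{_2}\|\bD\bs z\|_{_2}$ via \eqref{infty} and the identity $\|\nabla\bs z\|_{_2}=\sqrt2\|\bD\bs z\|_{_2}$, and then split by Young's inequality as $\tfrac\nu2\|\bD\bs z\|_{_2}^2+\tfrac{M^2}\nu\|\bs z\|_{_2}^2$. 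Half of the dissipation is thus absorbed on the left and the surviving term feeds the Gr\"onwall coefficient $C_\nu$ (this is where the $d$-dependent constant $C_3$ of \eqref{constants} originates). All the $\delta$-remainders are then estimated, after integration over $(0,t)$, purely by the a priori bounds of Lemma \ref{Lapriori}: the time remainder by the endpoint values $\delta\,|\!\int_\Omega\bs u_1\cdot\bs u_2|\le\delta M^2|\Omega|$, the viscous one by $\delta\int_{Q_t}|\bD\bs u_1||\bD\bs u_2|\le\delta(\psi^*)^2|\Omega|T$, and the convective and forcing ones likewise through $M$, $\psi^*$, $|\Omega|$, $T$ and $\|\bs f_i\|_{_2}$; since $\delta\le\|\psi_1-\psi_2\|_{_\infty}/\psi_*$, their sum is controlled by $(C_1+C_2)\|\psi_1-\psi_2\|_{_\infty}$ with the constants \eqref{constants}.

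Collecting everything and doubling yields, for all $t\in(0,T)$,
$$\|\bs z(t)\|_{_2}^2+\nu\int_0^t\|\bD\bs z\|_{_2}^2\le\|\bs z(0)\|_{_2}^2+C_\nu\!\int_0^t\|\bs z\|_{_2}^2+\|\bs f_1-\bs f_2\|_{_2}^2+C_*\|\psi_1-\psi_2\|_{_\infty}.$$
Dropping the dissipation and applying Gr\"onwall's inequality gives \eqref{Ff}. For \eqref{FF} I would instead retain the dissipation term and insert the bound \eqref{Ff} into $\int_0^t\|\bs z\|_{_2}^2\le Te^{C_\nu T}(\cdots)$, which converts the Gr\"onwall factor into $1+C_\nu Te^{C_\nu T}$ and keeps $\nu\|\bD\bs z\|_{_2}^2$ on the left. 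The genuine obstacle is the simultaneous achievement of two things available only for $\nu>0$: the construction of admissible competitors with sharp linear dependence on $\|\psi_1-\psi_2\|_{_\infty}$, and the absorption of the convective term into the viscous dissipation.
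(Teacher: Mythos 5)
Your proof is correct and follows essentially the same route as the paper's: the same rescaled competitors $\rho\,\bs u_j\in\K_{\psi_i(t)}$ with $\rho=\psi_*/(\psi_*+\|\psi_1-\psi_2\|_{_\infty})$, the same cross-testing and splitting into principal terms plus remainders of order $\|\psi_1-\psi_2\|_{_\infty}$, the same absorption of the convective term into half the viscous dissipation via Young's inequality feeding the Gr\"onwall coefficient $C_\nu$, and the same derivation of \eqref{FF} by keeping the dissipation and reinserting \eqref{Ff}. The only cosmetic difference is that you cancel one of the two cross convective terms using divergence-freeness (the paper simply bounds both), which at most improves the constant $C_3$ and does not change the argument.
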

	\begin{proof}
		To prove \eqref{Ff} we will follow the calculations in \cite[Theorem 3.2]{Rodrigues2013}, detailing when necessary. 
		
		Setting, for $i,j=1,2$ and $j\neq i$, $\rho=\frac{\psi_*}{\psi_*+\beta}$, where $\psi_*$ is defined in \eqref{constants}, $\beta=\|\psi_1-\psi_2\|_{_\infty}$ and $\bs w_i(t)=\rho(t)\bs u_j(t)$, we observe that $\bs w_i(t)\in\K_{\psi_i(t)}$. Setting $\bs v=\bs u_1-\bs u_2$, we have
		\begin{equation}\label{vt}
			\tfrac12\tfrac{d\ }{dt}	\int_\Omega|\bs v(t)|^2+\nu\int_\Omega|\bD\bs v(t)|^2\\
			\le\tfrac12\int_\Omega|\bs f_1(t)-\bs f_2(t)|^2+\tfrac12\int_\Omega|\bs v(t)|^2+\Theta(t)+\Upsilon(t),
		\end{equation}
		where
		\begin{align*}
			\Theta(t)&=\int_\Omega\Big(\partial_t\bs u_1\cdot(\bs w_1-\bs u_2)+\nu\bD\bs u_1:\bD(\bs w_1-\bs u_2)
			+\bs f_1\cdot(\bs u_2-\bs w_1)+\partial_t\bs u_2\cdot(\bs w_2-\bs u_1)\\
			\nonumber&\quad+\nu\bD\bs u_2:\bD(\bs w_2-\bs u_1)+\bs f_2\cdot(\bs u_1-\bs w_2)\Big)(t)\\
			\nonumber&=(\rho-1)\int_\Omega\Big(\partial_t\big(\bs u_1\cdot\bs u_2\big)+\nu\bD\bs u_1:\bD\bs u_2+\nu\bD\bs u_2:\bD\bs u_1-\big(\bs f_1\cdot\bs u_2+\bs f_2\cdot\bs u_1\big)\Big)(t)
		\end{align*}
		and
		\begin{align*}
			\Upsilon(t)&=\int_\Omega\Big((\bs u_1\otimes\bs u_1):\nabla(\bs w_1-\bs u_1)+(\bs u_2\otimes\bs u_2):\nabla(\bs w_2-\bs u_2)\Big)\\
			&=\int_\Omega\big(\bs u_1\otimes \bs v+\bs v\otimes\bs u_2\big):\nabla\bs v
			+(\rho-1)\int_\Omega\Big((\bs u_1\otimes\bs u_1):\nabla\bs u_2+
			(\bs u_2\otimes\bs u_2):\nabla\bs u_1\Big)(t).
		\end{align*}
		
		Recalling that $|\rho-1|\le \frac{\beta}{\psi_*}=\frac{\|\psi_1-\psi_2\|_{_\infty}}{\psi_*}$ and using \eqref{infty}, we have the following estimate
		\begin{align*}
			\int_0^t\Theta(\tau) d\tau&\le \Big(2\|\bs u_1\|_{L^\infty(0,T;\bs L^2(\Omega))}\|\bs u_2\|_{L^\infty(0,T;\bs L^2(\Omega))}+2\nu d T|\Omega|	\|\bD\bs u_1\|_{_\infty}\|\bD\bs u_2\|_{_\infty}\\
			\nonumber&\quad+\|\bs f_1\|_{_2}\|\bs u_2\|_{_2}+\|\bs f_2\|_{_2}\|\bs u_1\|_{_2}\Big)\tfrac1{\psi_*}\|\psi_1-\psi_2\|_{_\infty}\\
			\nonumber	&\le\tfrac1{\psi_*}\Big(2M^2|\Omega|+2\nu dT|\Omega|\big(\psi^*\big)^2 +M\sqrt {T|\Omega|} \big( \|\bs f_1\|_{_2} +\|\bs f_2\|_{_2}\big)\Big)\|\psi_1-\psi_2\|_{_\infty}\\
			\nonumber	&=C_1\|\psi_1-\psi_2\|_{_\infty},
		\end{align*}
		with $C_1$ defined in \eqref{constants}.
		We also have
		\begin{align}\label{upsilon}
			\int_0^t\Upsilon(\tau)d\tau&\le\sqrt{d}M\Big(\int_{Q_t}|\bs v|^2\Big)^\frac12\Big(\int_{Q_t}|\bD\bs v|^2\Big)+2dM\sqrt {2}|\Omega|T
			\tfrac{\psi^*}{\psi_*}\|\psi_1-\psi_2\|_{_\infty}\\
			\nonumber&\le \tfrac\nu2\int_{Q_t}|\bD\bs v|^2+\tfrac{C_3}\nu\int_{Q_t}|\bs v|^2+C_2\|\psi_1-\psi_2\|_{_\infty},
		\end{align}
		where $C_2$ and $C_3$ are also defined in \eqref{constants}.
		
		Recalling \eqref{vt} and the estimates above, we obtain
		\begin{multline}\label{l2}
			\tfrac12	\int_\Omega|\bs v(t)|^2+\nu\int_{Q_t}|\bD\bs v|^2
			\le\tfrac12\int_{Q_t}|\bs f_1-\bs f_2|^2+\tfrac12\int_\Omega|\bs u^0_1-\bs u^0_2|^2\\
			+\big(\tfrac12+\tfrac{C_3}\nu\big)\int_{Q_t}|\bs v|^2+\tfrac\nu2\int_{Q_t}|\bD\bs v|^2+\big(C_1+C_2)\|\psi_1-\psi_2\|_{L^\infty(Q_t)}.
		\end{multline}
		Applying Gr\"onwall's inequality, we obtain for all $t\in(0,T)$,
		\begin{equation}\label{l3}
			\int_\Omega |\bs u_1-\bs u_2|^2(t) \le e^{C_\nu T}\Big(\|\bs f_1-\bs f_2\|^2_{_2}+\|\bs u^0_1-\bs u^0_2\|_{_2}^2\\
			+C_*\|\psi_1-\psi_2\|_{_\infty}\Big),
		\end{equation}
		obtaining \eqref{Ff}. Integrating \eqref{l3} in $(0,T)$ and using it in \eqref{l2} with the definitions \eqref{constants} we easily conclude \eqref{FF}.
	\end{proof}
	
	\section{Weak solutions to variational inequalities}\label{sec 3}
	
	Replacing the assumption \eqref{psi} by the weaker assumption \eqref{psiWeak} on the threshold $\psi$, we can easily extend the result on the existence of weak solution of \cite[Theorem 2.13]{MirandaRodriguesSantos2020}, with the additional convective term $\bs u\otimes\bs u:\nabla(\bs v-\bs u)$, which presents suplementary difficulties.
	\begin{theorem}\label{Ew}
		Under the assumptions \eqref{*}, \eqref{psiWeak}, \eqref{assumptions} and $\nu\geq 0$, there exists a weak solution $\bs u=\bs u_\nu\in C\big([0,T];\bs L^p(\Omega)\big)\cap\mathscr V_\infty^\cap$, for all $1<p<\infty,$ to the evolution variational inequality 
		\begin{equation} \label{IVf}
			\begin{cases}
				\bs u\in\ \mathscr K_{\psi},\\
				\displaystyle	\int_0^T\langle\partial_{t}\bs v,\bs v-\bs u\rangle
				+\nu\int_{Q_T}\bD\bs u:\bD(\bs v-\bs u)
				-\int_{Q_T}(\bs u\otimes\bs u):\nabla(\bs v-\bs u)\\
				\hfill{\displaystyle	\ge\int_{Q_T}\bs f\cdot(\bs v-\bs u)-\tfrac12\int_\Omega |\bs v(0)-\bs u^0|^2,\quad \forall\,\bs v\in\mathscr K_{\psi}\cap\mathscr W_2,}
			\end{cases}
		\end{equation} 
		where $\langle\,\cdot\,,\,\cdot\,\rangle$ is the duality pairing between $\mathbb V_2'$ and $\mathbb V_2$.
	\end{theorem}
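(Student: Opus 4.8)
The plan is to obtain $\bs u$ as a limit of strong solutions associated with a time-regularisation of the threshold, the decisive point being a compactness estimate that survives the loss of control on $\partial_t\bs u$.

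First I would regularise $\psi$ in time. Extending $\psi$ by $\psi(0)$ for $t<0$ and mollifying in $t$ produces $\psi^{(n)}$; setting $\psi_n=\psi^{(n)}+\omega(1/n)$, where $\omega(h)=\sup_{|t-s|\le h}\|\psi(t)-\psi(s)\|_{_\infty}$ is the (uniform) modulus of continuity of $\psi$ given by \eqref{psiWeak}, produces thresholds $\psi_n$ satisfying \eqref{psi}, with $\psi_*\le\psi_n\le\psi^*+2\omega(1/n)$, $\psi_n\ge\psi$ and $\psi_n\to\psi$ in $C\big([0,T];L^\infty(\Omega)\big)$. Since $\psi_n(0)\ge\psi(0)$ one keeps $\bs u^0\in\K_{\psi_n(0)}$, so Theorem~\ref{1} yields strong solutions $\bs u_n=S(\bs f,\bs u^0,\psi_n)\in H^1\big(0,T;\mathbb H\big)\cap\mathscr V_\infty^\cap$ for every $\nu\ge0$, with $\partial_t\bs u_n\in\bs L^2(Q_T)$. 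By Lemma~\ref{Lapriori} these are bounded, uniformly in $n$, in $L^\infty\big(0,T;\bs L^2(\Omega)\big)$, in $L^\infty\big(0,T;\mathbb V_p\big)$ for every $p<\infty$ (since $|\bD\bs u_n|\le\psi_n$ is uniformly bounded), and in $\bs L^\infty(Q_T)$ by \eqref{infty}.

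The heart of the matter, and the only genuinely new difficulty compared with \cite[Theorem 2.13]{MirandaRodriguesSantos2020}, is compactness: because $\|\psi_n\|_{W^{1,\infty}(0,T;L^\infty(\Omega))}$ blows up, the estimate \eqref{dt} gives no uniform bound on $\partial_t\bs u_n$, so Aubin--Lions is not directly available, yet strong convergence of $\bs u_n$ is indispensable to pass to the limit in the quadratic convective term. I would therefore derive a uniform time-equicontinuity estimate directly from the variational inequality, in the spirit of Theorem~\ref{cd}. Fix $h>0$, set $\beta_h=\sup_{|\tau-s|\le h}\|\psi_n(\tau)-\psi_n(s)\|_{_\infty}\le\omega(h)$ (uniformly in $n$, as mollification does not increase the modulus of continuity) and $\rho_h=\psi_*/(\psi_*+\beta_h)$; for a.e.\ $t$ test the strong inequality \eqref{IV} for $\bs u_n$ at time $\tau\in(t,t+h)$ with $\bs w=\rho_h\bs u_n(t)$, which is admissible because $\rho_h\psi_n(t)\le\psi_n(\tau)$. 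Integrating in $\tau$ over $(t,t+h)$, and using that $\int_t^{t+h}\int_\Omega\partial_t\bs u_n(\tau)\cdot\big(\bs u_n(\tau)-\bs u_n(t)\big)\,d\tau=\tfrac12\|\bs u_n(t+h)-\bs u_n(t)\|_{_2}^2$, the left-hand side reproduces $-\tfrac12\|\bs u_n(t+h)-\bs u_n(t)\|_{_2}^2$ up to a term of order $\beta_h$, while the contributions of the right-hand side are bounded, using $|\rho_h-1|\le\beta_h/\psi_*$, the uniform bound on $|\bD\bs u_n|$ and $\|\bs u_n\|_{_\infty}\le M$, by a quantity of order $h+\beta_h$ uniformly in $n$ and in $t$ (the forcing being handled through $\int_t^{t+h}\|\bs f(\tau)\|_{_2}\,d\tau\le\sqrt h\,\|\bs f\|_{_2}$). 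This gives $\sup_t\|\bs u_n(t+h)-\bs u_n(t)\|_{_2}^2\le\eta(h)$ with $\eta(h)\to0$ as $h\to0$, uniformly in $n$. Together with the uniform bound in $L^\infty\big(0,T;\mathbb V_p\big)$ and the compact embedding $\mathbb V_p\hookrightarrow\hookrightarrow\bs L^2(\Omega)$, Simon's theorem \cite{Simon1987} yields a subsequence with $\bs u_n\to\bs u$ in $C\big([0,T];\bs L^2(\Omega)\big)$, hence, by the $\bs L^\infty(Q_T)$ bound and interpolation, in $C\big([0,T];\bs L^p(\Omega)\big)$ and in $\bs L^r(Q_T)$ for all finite $p,r$; in particular $\bs u_n\otimes\bs u_n\to\bs u\otimes\bs u$ in $\bs L^2(Q_T)$, while $\bD\bs u_n\lraup\bD\bs u$ in $\bs L^r(Q_T)$-weak for every $r<\infty$, so $\bs u\in\mathscr V_\infty^\cap$ and, arguing as in \eqref{cl} with $\psi_n\to\psi$, $|\bD\bs u|\le\psi$, i.e.\ $\bs u\in\mathscr K_\psi$.

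Finally I would pass to the limit in the weak form. For $\bs v\in\mathscr K_\psi\cap\mathscr W_2$ one has $\bs v(t)\in\K_{\psi(t)}\subset\K_{\psi_n(t)}$ (here $\psi_n\ge\psi$ is used again), so $\bs v(t)$ is admissible in \eqref{IV} for $\bs u_n$; integrating that inequality over $(0,T)$ and moving the time derivative onto $\bs v$ via $\int_0^T\langle\partial_t\bs u_n,\bs v-\bs u_n\rangle=\int_0^T\langle\partial_t\bs v,\bs v-\bs u_n\rangle-\tfrac12\|(\bs v-\bs u_n)(T)\|_{_2}^2+\tfrac12\|\bs v(0)-\bs u^0\|_{_2}^2$ and discarding the nonnegative endpoint term, one obtains \eqref{IVf} written for $\bs u_n$. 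The convective term passes to the limit by strong $\bs L^2(Q_T)$ convergence of $\bs u_n\otimes\bs u_n$ against the weakly convergent $\nabla(\bs v-\bs u_n)$, the forcing and the $\bD\bs u:\bD\bs v$ contributions pass by weak convergence, and the only delicate term, $-\nu\|\bD\bs u_n\|_{_2}^2$, is treated by weak lower semicontinuity: taking $\limsup_n$ of the left-hand side and $\lim_n$ of the right-hand side, the inequality $-\nu\liminf_n\|\bD\bs u_n\|_{_2}^2\le-\nu\|\bD\bs u\|_{_2}^2$ yields precisely \eqref{IVf}, with $\bs u\in\mathscr K_\psi\cap\mathscr V_\infty^\cap\cap C\big([0,T];\bs L^p(\Omega)\big)$. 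The same argument applies verbatim when $\nu=0$, the viscous term being simply absent. The main obstacle is the compactness of the preceding paragraph; once the uniform time-equicontinuity estimate is in hand, the remaining passages are routine.
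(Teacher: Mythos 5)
Your proof is correct, but it organises the argument differently from the paper, and the comparison is instructive. The paper splits the proof in two: for $\nu>0$ it approximates $\psi$ by smooth thresholds $\psi_n$ and invokes the continuous dependence estimate \eqref{FF} of Theorem \ref{cd} to show that the corresponding strong solutions form a \emph{Cauchy sequence} in $C\big([0,T];\bs L^2(\Omega)\big)\cap\mathscr V_2$, so the whole sequence converges strongly (gradients included) and no compactness is needed; only for $\nu=0$, where Theorem \ref{cd} is unavailable, does it fall back on the equicontinuity estimate obtained by testing \eqref{IV} at time $\tau$ with the rescaled competitor $\zeta_n\bs u_n(s)$. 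Your proposal runs that second, compactness-based argument \emph{uniformly for all} $\nu\ge 0$: your time-equicontinuity estimate with $\rho_h\bs u_n(t)$ is essentially the paper's $\nu=0$ computation, and your verification that each error term is $O(\omega(h)+h+\sqrt h)$ uniformly in $n$ is sound, as is the direction of the lower-semicontinuity inequality for $-\nu\|\bD\bs u_n\|_2^2$ in the final limit passage. What each route buys: the paper's Cauchy-sequence argument for $\nu>0$ avoids subsequence extraction, gives strong $\mathscr V_2$ convergence of the approximations, and produces a solution that is by construction a limit of approximations, hence automatically inherits the continuous dependence estimates \eqref{Ff}--\eqref{FF} (this is used in the remark following the theorem and in the fixed-point arguments of Section 5); your route yields only subsequential convergence with weak gradient convergence, so those properties of the constructed solution would have to be recovered a posteriori from the uniqueness theorem of Section 3. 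Conversely, your route is self-contained (it never uses Theorem \ref{cd}) and treats both cases at once. Two further points in your favour: your regularisation $\psi_n=\psi^{(n)}+\omega(1/n)\ge\psi$ guarantees $\bs u^0\in\K_{\psi_n(0)}$, so assumption \eqref{assumptions} genuinely holds for each approximating problem --- a compatibility the paper's generic approximation $\psi_n\to\psi$ silently presupposes --- and it makes every $\bs v\in\mathscr K_\psi\cap\mathscr W_2$ directly admissible for the approximating inequalities, dispensing with the paper's rescaled test functions $\bs v_n=\rho_n\bs v$.
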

	\begin{proof} Consider a sequence of constraints $\psi_n\in W^{1,\infty}\big(0,T;L^\infty(\Omega)\big)$ satisfying \eqref{*} and such that $\psi_n\underset{n}\rightarrow\psi$ in $C\big([0,T];L^\infty(\Omega)\big)$. 
		
		We treat first the case $\nu>0$.
		
		The corresponding sequence of unique solutions $\bs u_n=S(\nu,\bs f,\bs u^0,\psi_n)$ given by Theorem \ref{1}, corresponding to $\psi_n$ with the same $\bs f$ and $\bs u^0$, satisfy the estimate \eqref{FF} with $C_4=\sqrt{(1+C_\nu Te^{C_\nu T}))C_*}$
		\begin{equation}\label{FFn}
			\|\bs u_n-\bs u_m\|_{C([0,T];\bs L^2(\Omega))}+	\sqrt\nu\|\bD(\bs u_n-\bs u_m)\|_{_2}\leq \sqrt{2} C_4\|\psi_n-\psi_m\|^{1/2}_{_\infty}.
		\end{equation}
		
		Recalling \eqref{u_linfty}, we easily conclude that  $\bs u_n\in \mathscr K_{\psi_n} \cap C\big([0,T];\bs L^p(\Omega)\big)\cap \mathscr V_\infty^\cap$, for any $1<p<\infty$, and,
		consequently, there exists a $\bs u\in C\big([0,T];\bs L^p(\Omega)\big)\cap \mathscr V_\infty^\cap$ which is the limit of the Cauchy sequence $\bs u_n\underset{n}{\rightarrow}\bs u$ in $C\big([0,T];\bs L^p(\Omega)\big)\cap \mathscr V_p$, for any $1<p<\infty$. It is also clear that $\bs u\in \mathscr K_\psi$. 
		
		Define $\rho_n\underset{n}{\rightarrow}1$ by
		\begin{equation} \label{rho}
			\rho_n=\frac{\psi_*}{\psi_*+\eps_n},\ \text{ where } \eps_n=\|\psi_n-\psi\|_{_\infty}\underset{n}{\rightarrow} 0.
		\end{equation}
		
		For any $v\in\mathscr K_{\psi}\cap \mathscr W_2$, it is easy to see that $\bs v_n=\rho_n \bs v\in\mathscr K_{\psi_n}\cap \mathscr W_2$ and $\bs v_n\underset{n}{\rightarrow}\bs v$ in $C\big([0,T];\bs L^2(\Omega)\big)\cap \mathscr W_2$. As a strong solution is also a weak solution, $\bs u_n$ satisfies the evolution variational inequality 
		\begin{equation} \label{IVfn}
			\begin{cases}
				\bs u_n\in\ \mathscr K_{\psi_n},\\
				\displaystyle	\int_0^T\langle\partial_{t}\bs v_n,\bs v_n-\bs u_n\rangle
				+\nu\int_{Q_T}\bD\bs u_n:\bD(\bs v_n-\bs u_n)
				-\int_{Q_T}(\bs u_n\otimes\bs u_n):\nabla(\bs v_n-\bs u_n)\\
				\hfill{\displaystyle	\ge\int_{Q_T}\bs f\cdot(\bs v_n-\bs u_n)-\tfrac12\int_\Omega |\bs v_n(0)-\bs u^0|^2,\quad \forall\,\bs v_n\in\mathscr K_{\psi_n}\cap\mathscr W_2 .}
			\end{cases}
		\end{equation}
		Since it is clear that $\bs u_n\otimes\bs u_n\underset{n}{\rightarrow}\bs u\otimes\bs u$ in $\bs L^4(Q_T)$ and $\nabla v_n=\rho_n\nabla v\underset n\rightarrow\nabla v$ in $\bs L^2(Q_T)$, then we can pass to the limit in the inequality \eqref{IVfn}, concluding that $\bs u$ satisfies \eqref{IVf} and, therefore, is a weak solution to the evolution variational inequality. 
		
		Now we treat the case $\nu=0$.
		From the previous case, we know that for each $\nu>0$ we have a solution $\bs u_\nu$ of the variational inequality \eqref{IVf}.
		
		We select a strictly positive sequence $\{\nu_n\}_n$ that converges to zero and we consider the sequence of unique solutions $\bs u_n=S(\nu_n,\bs f,\bs u^0,\psi_n)$ of the strong variational inequality \eqref{IV}.
		
		Using an idea in the proof of Theorem 2.13 of \cite{MirandaRodriguesSantos2020}, we are going to show that the sequence $\{\bs u_n\}_n$ is relatively compact in $C\big([0,T];\bs L^p(\Omega)\big)$, $1< p<\infty$.
		
		Given $\eps>0$, there exists $\delta>0$ such that
		\begin{equation*}
			\eps_n=\sup_{|\tau-s|<\delta}\|\psi_n(\tau)-\psi_n(s)\|_{L^\infty(\Omega)}<\eps
		\end{equation*}
		for all $n$ large and all $\tau, s\in (0,T)$. Setting
		$$\zeta_n=\frac{\psi_*}{\psi_*+\eps_n},$$
		we have $\zeta_n\bs u_n\in\mathscr K_{\psi_n(\tau)}$, for all $\tau\in(s-\delta,s+\delta)$ and using $\zeta_n\bs u_n$ as a test function in \eqref{IV} for $t=\tau$, we get
		\begin{multline}\label{neta}
			\int_\Omega \partial_t\bs u_n(\tau)\cdot\big(\zeta_n\bs u_n(s)-\bs u_n(\tau)\big)+\nu_n\int_\Omega \bD\bs u_n(\tau):\bD\big(\zeta_n\bs u_n(s)-\bs u_n(\tau)\big)\\
			-\int_\Omega \big(\bs u_n(\tau)\otimes\bs u_n(\tau):\nabla\big(\zeta_n\bs u_n(s)-\bs u_n(\tau)\big)\ge\int_\Omega\bs f\cdot\big(\zeta_n\bs u_n(s)-\bs u_n(\tau)\big).
		\end{multline}
		
		Recall that $0<\zeta_n\le 1$ and, since $\bs u_n\in\mathscr K_{\psi_n}$, the sequence $\{\bs u_n\}_n$ is uniformly bounded in $L^\infty(Q_T)\cap L^\infty\big(0,T;\mathbb V_p\big),$ for any $p\in[1,\infty).$ Then, fixing $s$ and integrating in $\tau$ on $[s,t]$, we obtain
		\begin{align*}
			\tfrac12\int_\Omega|\bs u_n(t)-\bs u_n(s)|^2&=\tfrac12\int_s^t\tfrac{d\ }{d\tau}\int_\Omega|\bs u_n(\tau)-\bs u_n(s)|^2\\
			&=\int_s^t\int_\Omega\partial_\tau\bs u_n(\tau)\cdot\big((\bs u_n(\tau)-\zeta_n\bs u_n(s))+(\zeta_n-1)\bs u_n(s)\big)\\
			&\le(\zeta_n-1)\int_s^t\int_\Omega\partial_\tau\bs u_n(\tau)\cdot\bs u_n(s)+c_1|t-s|+c_2\int_s^t\|\bs f(\tau)\|_{\bs L^2(\Omega)}\\
			&\le (\zeta_n-1)\int_\Omega\big(\bs u_n(t)-\bs u_n(s)\big)\cdot\bs u_n(s)+c_1|t-s|+c_2|t-s|^\frac12\|\bs f\|_{\bs L^{2}(Q_T)}\\
			&\le c_3\eps+c_4|t-s|^\frac12,
		\end{align*}
		being the constants $c_3$ and $c_4$ consequently independent of $n$.
		
		Recalling \eqref{u_linfty}, given $p> 2$, we have
		$$\int_\Omega|\bs u_n(t)-\bs u_n(s)|^p=\int_\Omega|\bs u_n(t)-\bs u_n(s)|^{p-2}|\bs u_n(t)-\bs u_n(s)|^2\le (\psi^* \sqrt {2|\Omega|T/\kappa})^{p-2}\big(c_3\eps+c_4|t-s|^\frac12\big).$$
		Hence $\{\bs u_n\}_n$ is relatively compact in $C\big([0,T];\bs L^p(\Omega)\big)$, for any $p\in[1,\infty)$. So, fixing any $p>2$, there exists a subsequence, that we still denote by $\{\bs u_n\}_n$, that converges to a function $\bs u$ in $C\big([0,T];\bs L^p(\Omega)\big)$. Besides, as $\bs u_n\in\K_{\psi_n}$, then $|\bD\bs u_n|\le\psi^*$ and, also for a subsequence, we have $\{\bD\bs u_n\}_n$ converges to $\bD\bs u$ weakly in $\bs L^2(Q_T)$ and afterwards, using \eqref{PK}, we conclude that $\{\nabla\bs u_n\}_n$ also converges weakly in $\bs L^2(Q_T)$ to $\nabla\bs u$.
		
		Given $\bs v\in\mathscr K_\psi$ and $\rho_n$ defined in \eqref{rho},
		setting $\bs v_n=\rho_n\bs v$, as $\bs v\in\mathscr K_{\psi_n}\cap\mathscr V_2$ and  $\bs u_n$ also solves the weak variational inequality, then
		\begin{multline*}
			\int_0^T\langle\partial_{t}\bs v_n,\bs v_n-\bs u_n\rangle+\nu_n\int_{Q_T}\bD\bs u_n:\bD(\bs v_n-\bs u_n)\\
			-\int_{Q_T}(\bs u_n\otimes\bs u_n):\nabla(\bs v_n-\bs u_n)
			\ge\int_{Q_T}\bs f\cdot(\bs v_n-\bs u_n)-\tfrac12\int_\Omega |\bs v_n(0)-\bs u^0|^2.
		\end{multline*}
		By the observations above, we can pass to the limit when $n\rightarrow\infty$ in the above inequality, noting that the term $(\bs u_n\otimes\bs u_n):\nabla(\bs v_n-\bs u_n)$ converges to $(\bs u\otimes\bs u):\nabla(\bs v-\bs u)$ in $\bs L^1(Q_T)$, obtaining, as the term $\int_{Q_T}\bD\bs u_n:\bD(\bs v_n-\bs u_n)$ is bounded and $\nu_n\underset{n}{\rightarrow}0,$ that
		\begin{equation*}
			\int_0^T\langle\partial_{t}\bs v,\bs v-\bs u\rangle
			-\int_{Q_T}(\bs u\otimes\bs u):\nabla(\bs v-\bs u)
			\ge\int_{Q_T}\bs f\cdot(\bs v-\bs u)-\tfrac12\int_\Omega |\bs v(0)-\bs u^0|^2.
		\end{equation*}

		Recall that, arguing similarly as in \eqref{cl}, it is easy to verify that $\bs u\in\mathscr K_\psi$.
	\end{proof}  
	
	\begin{remark}
		Two weak solutions $\bs u_i=S(\nu,\bs f_i,\bs u^0_i,\psi_i)\in \mathscr K_{\psi_i}\cap C\big([0,T];\bs L^2(\Omega)\big)$, $i=1,2$, being Solutions Obtained as Limit of Approximations in $C\big([0,T];\bs L^2(\Omega)\big)\cap \mathscr V_2$, clearly also satisfy the continuous dependence estimates \eqref{Ff} and \eqref{FF}.
	\end{remark}
	
	We can prove the uniqueness and the continuous dependence estimates of weak solutions of the variational inequality \eqref{IVf} only in the nondegenerate case $\nu>0$, where we use the following result.
	
	\begin{lemma}\label{5.3}
		Let $\bs v\in C\big([0,T];\bs L^2(\Omega)\big)\cap \mathscr V_2\cap \mathscr K_\psi$, with $\psi \in C\big([0,T];L^\infty(\Omega)\big)$ and let  $\bs v(0)\in \mathbb K_{\psi(0)}$. Then there exist a regularizing sequence $\{\bs v_n\}_n$ and a sequence of scalar functions $\{\psi_n\}_n$ satisfying the following properties:
		\begin{enumerate}
			\item[i)] $\bs v_n\in C^1\big([0,T];\bs L^2(\Omega)\big)\cap H^1\big(0,T;\mathbb V_2\big)$;
			\item[ii)] $\bs v_n\underset{n}\rightarrow\bs v$ strongly in $C\big([0,T];\bs L^2(\Omega)\big)\cap \mathscr V_2$;
			\item[iii)] $\displaystyle\varlimsup_n\displaystyle\int_0^T\langle\partial_t\bs v_n,\bs v_n-\bs v\rangle\le0$;
			\item[iv)] $|\bD\bs v_n|\le\psi_n$, where $\psi_n\in C\big([0,T];L^\infty(\Omega)\big)$ and $\psi_n\underset{n}\rightarrow\psi$ in $C\big([0,T];L^\infty(\Omega)\big)$.
		\end{enumerate}
	\end{lemma}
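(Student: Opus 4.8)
The plan is to regularize $\bs v$ only in time, by the causal exponential average
\[
\bs v_n(t)=n\int_0^t e^{-n(t-s)}\bs v(s)\,ds+e^{-nt}\bs v(0),
\]
which is exactly the solution of the linear ODE $\partial_t\bs v_n=n(\bs v-\bs v_n)$ with $\bs v_n(0)=\bs v(0)$. I would then verify (i)--(iv) for this single choice, together with the scalar average $\psi_n(t)=n\int_0^t e^{-n(t-s)}\psi(s)\,ds+e^{-nt}\psi(0)$ of the threshold. The virtue of this particular regularization (as opposed to a symmetric time mollification) is that the monotone structure of its defining ODE delivers the sign condition (iii) for free.

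For (i), since $\bs v\in C\big([0,T];\bs L^2(\Omega)\big)$ the right-hand side $n(\bs v-\bs v_n)$ is continuous in $t$ with values in $\bs L^2(\Omega)$, whence $\bs v_n\in C^1\big([0,T];\bs L^2(\Omega)\big)$; and because $\bs v(0)\in\K_{\psi(0)}\subset\mathbb V_2$ and $\bs v\in\mathscr V_2$, both $\bs v_n$ and $\partial_t\bs v_n=n(\bs v-\bs v_n)$ lie in $L^2\big(0,T;\mathbb V_2\big)$, giving $\bs v_n\in H^1\big(0,T;\mathbb V_2\big)$. For (iii), note that $\partial_t\bs v_n\in\bs L^2(\Omega)$, so the duality pairing reduces to the $\bs L^2$ inner product and
\[
\int_0^T\langle\partial_t\bs v_n,\bs v_n-\bs v\rangle=-n\int_0^T\|\bs v_n(t)-\bs v(t)\|^2_{\bs L^2(\Omega)}\,dt\le 0
\]
for every $n$; in particular $\varlimsup_n\le 0$.

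For (ii), I would use the normalization $n\int_0^t e^{-n(t-s)}\,ds+e^{-nt}=1$ to write
\[
\bs v_n(t)-\bs v(t)=n\int_0^t e^{-n(t-s)}\big(\bs v(s)-\bs v(t)\big)\,ds+e^{-nt}\big(\bs v(0)-\bs v(t)\big),
\]
and deduce strong convergence in $C\big([0,T];\bs L^2(\Omega)\big)$ from the uniform continuity of $\bs v\colon[0,T]\to\bs L^2(\Omega)$, splitting the integral into the range $|s-t|<\delta$ and an exponentially small remainder; convergence in $\mathscr V_2$ follows from the standard fact that this averaging operator tends to the identity on $L^2\big(0,T;\mathbb V_2\big)$, while the boundary term $e^{-nt}\bs v(0)\to0$ there.

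Finally, for (iv), since $\bD$ is linear and commutes with the time integral, $\bD\bs v_n$ is the same average of $\bD\bs v$; using $|\bD\bs v(s,x)|\le\psi(s,x)$ a.e.\ in $Q_T$ together with $|\bD\bs v(0,x)|\le\psi(0,x)$ a.e.\ in $\Omega$ (this is where the hypothesis $\bs v(0)\in\K_{\psi(0)}$ is used), the integral triangle inequality yields $|\bD\bs v_n|\le\psi_n$ pointwise a.e., and $\psi_n\to\psi$ in $C\big([0,T];L^\infty(\Omega)\big)$ with $\psi_n\in C\big([0,T];L^\infty(\Omega)\big)$ exactly as in (ii). The only steps requiring genuine care are the measurability underpinning the pointwise bound in (iv)---handled by Fubini, so that for a.e.\ $x$ one has $|\bD\bs v(s,x)|\le\psi(s,x)$ for a.e.\ $s$ \emph{before} integrating---and the uniformity in $t$ near $t=0$ in (ii), where $e^{-nt}(\bs v(0)-\bs v(t))$ is controlled by continuity of $\bs v$ at $0$ rather than by exponential decay. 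Beyond choosing this regularization, there is no substantial obstacle.
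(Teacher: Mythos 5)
Your proposal is correct and coincides with the paper's own construction: your causal exponential average is exactly the solution of the paper's regularizing problem \eqref{regul}, i.e.\ $\bs v_n+\tfrac1n\partial_t\bs v_n=\bs v$ with $\bs v_n(0)=\bs v(0)$, and your $\psi_n$ is the corresponding average of $\psi$ given in \eqref{psin}; the paper merely cites Lions, Roub\'{\i}\v{c}ek and Miranda--Rodrigues--Santos for the verifications of i)--iv) that you carry out explicitly. One small point in your favour: the factor $n$ in front of the integral defining $\psi_n$ is indeed needed (without it $\psi_n\to0$ rather than $\psi$), and it is missing from the paper's displayed formula \eqref{psin}, evidently a typographical slip.
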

	\begin{proof} The functions $\bs v_n$ are solutions of the problem
		\begin{equation}\label{regul}\begin{cases}
				\bs v_n+\tfrac1n\partial_t\bs v_n=\bs v,\\
				\bs v_n(0)=\bs v(0).
			\end{cases}
		\end{equation}
		The proof of i), ii), iii) is well known (see \cite[p. 274]{Lions1969} or \cite[ p. 206]{Roubicek2013}).
		
		Details of the proof of iv) can be found in \cite{MirandaRodriguesSantos2020}, were is also given
		\begin{equation}\label{psin}
			\psi_n(x,t)=e^{-nt}\int_0^t\psi(x,s)e^{ns}ds+e^{-nt}\psi(x,0).
		\end{equation}
	\end{proof}
	
	\begin{theorem}
		Assuming \eqref{*}, \eqref{psiWeak}, \eqref{assumptions} and $\nu>0$, the weak solution $\bs u\in \mathscr K_\psi\cap C\big([0,T];\bs L^2(\Omega)\big)$ to the evolution variational inequality \eqref{IVf} is unique. 
		
		Moreover, two weak solutions $\bs u_i$ of \eqref{IVf} corresponding to different data $\bs f_i, \bs u^i_0$ and $\psi_i, i=1,2,$ also satisfy the continuous dependence estimates \eqref{Ff} and \eqref{FF}.
	\end{theorem}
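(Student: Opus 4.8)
The plan is to prove the uniqueness statement by a quantitative energy estimate for the difference of two solutions, and then to obtain the continuous dependence almost for free. Indeed, once uniqueness is available, I would argue as follows: by Theorem \ref{Ew} each datum $(\bs f_i,\bs u^0_i,\psi_i)$ possesses a weak solution constructed as a limit of strong solutions, and by the Remark following that construction such a solution already satisfies \eqref{Ff} and \eqref{FF}; uniqueness then forces the given weak solution $\bs u_i$ to coincide with this approximable one, so that \eqref{Ff} and \eqref{FF} transfer to $\bs u_i$. Thus the whole theorem reduces to the uniqueness of weak solutions for fixed data, i.e.\ to showing $\bs v:=\bs u_1-\bs u_2\equiv0$ when $\bs f_1=\bs f_2$, $\bs u^0_1=\bs u^0_2$ and $\psi_1=\psi_2$, together with the version of the estimate that keeps track of the data differences.

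For the energy estimate the obstruction is that a weak solution has no time derivative in $\bs L^2$, so $\bs u_1,\bs u_2$ cannot be used as test functions in \eqref{IVf}. I would therefore replace them by the time-regularizations $\bs u_{i,n}$ furnished by Lemma \ref{5.3}: these lie in $\mathscr W_2$, converge to $\bs u_i$ in $C\big([0,T];\bs L^2(\Omega)\big)\cap\mathscr V_2$, satisfy $|\bD\bs u_{i,n}|\le\psi_n$ with $\psi_n\to\psi$ in $C\big([0,T];L^\infty(\Omega)\big)$, and obey the one-sided inequality iii). Rescaling by $\rho_n=\psi_*/(\psi_*+\|\psi_n-\psi\|_{_\infty})\to1$ as in \eqref{rho} makes $\rho_n\bs u_{i,n}\in\mathscr K_\psi\cap\mathscr W_2$ admissible. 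Inserting these as test functions and processing the time-derivative and viscous contributions following the monotonicity machinery for variational inequalities with time-dependent convex sets of \cite{Kenmochi2013,FukaoKenmochi2013,MirandaRodriguesSantos2020} — the key point being that property iii) together with the vanishing of the regularized difference at $t=0$ gives the correct sign of the parabolic term in the limit $n\to\infty$ — I expect to arrive, for a.e.\ $t\in(0,T)$, at
\begin{equation*}
\tfrac12\|\bs v(t)\|_{_2}^2+\nu\int_{Q_t}|\bD\bs v|^2\le\tfrac12\|\bs v(0)\|_{_2}^2+\int_{Q_t}(\bs u_1\otimes\bs v+\bs v\otimes\bs u_2):\nabla\bs v,
\end{equation*}
with the understanding that, in the continuous-dependence version, the right-hand side carries in addition the $\|\bs f_1-\bs f_2\|_{_2}^2$ term and the $O(\|\psi_1-\psi_2\|_{_\infty})$ contributions already present in $\Theta$ and in the last line of \eqref{upsilon}.

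The genuinely new ingredient, and what I expect to be the principal obstacle, is the convective term on the right-hand side, which is not monotone and is absent from \cite{Kenmochi2013,FukaoKenmochi2013,MirandaRodriguesSantos2020}. Here I would use decisively that the gradient constraint confines both solutions to $\bs L^\infty(Q_T)$ with the explicit bound $\|\bs u_i\|_{_\infty}\le M$ of \eqref{infty}; together with Korn's inequality $\|\nabla\bs v\|_{_2}\le\sqrt2\,\|\bD\bs v\|_{_2}$ and Young's inequality this yields, exactly as in \eqref{upsilon},
\begin{equation*}
\Big|\int_\Omega(\bs u_1\otimes\bs v+\bs v\otimes\bs u_2):\nabla\bs v\Big|\le 2M\|\bs v\|_{_2}\|\nabla\bs v\|_{_2}\le\tfrac\nu2\|\bD\bs v\|_{_2}^2+\tfrac{C_3}\nu\|\bs v\|_{_2}^2,
\end{equation*}
so that one half of the viscous dissipation absorbs the convection — which is precisely where the hypothesis $\nu>0$ is indispensable, and why no such estimate is expected for $\nu=0$. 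After this absorption, Gr\"onwall's inequality applied to $t\mapsto\tfrac12\|\bs v(t)\|_{_2}^2$ gives $\bs v\equiv0$ in the uniqueness case and reproduces \eqref{Ff} and \eqref{FF} in general. The delicate remaining point, for $\psi$ only continuous in time, is the rigorous passage to the limit in the regularized parabolic term while controlling simultaneously the error from the rescaling $\rho_n$ and from the mismatch $\psi_n\neq\psi$; this is where property iii) of Lemma \ref{5.3} and the uniform bounds coming from $\mathscr K_{\psi_n}\subset\mathscr V_\infty^\cap$ must be exploited with care.
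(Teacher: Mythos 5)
Your reduction of the continuous--dependence part to uniqueness (approximate by strong solutions via Theorem \ref{Ew}, invoke the Remark that such limits satisfy \eqref{Ff}--\eqref{FF}, then identify the given weak solution with the approximable one) is exactly the paper's argument for that part, and your treatment of the convective term (the $L^\infty$ bound $M$ coming from the gradient constraint, Korn and Young, absorption into $\tfrac\nu2\|\bD\bs v\|_{_2}^2$, then Gr\"onwall) is also the right one. The gap is in the core step: the derivation of the energy inequality from the weak formulation \eqref{IVf}. Testing the inequality for $\bs u_1$ with $\rho_n\bs u_{2,n}$ and the inequality for $\bs u_2$ with $\rho_n\bs u_{1,n}$ (which is what \say{inserting these as test functions} must mean, since testing each inequality with its own regularization gives no information on the difference) and summing produces, in the time-derivative terms, besides $\int_0^T\langle\partial_t\bs w_n,\bs w_n\rangle=\tfrac12\|\bs w_n(T)\|_{_2}^2$ and the terms $\int_0^T\langle\partial_t\bs u_{in},\bs u_{in}-\bs u_i\rangle=-\tfrac1n\|\partial_t\bs u_{in}\|_{_2}^2$ that property iii) of Lemma \ref{5.3} controls, the mixed terms $-\tfrac2n\int_{Q_T}\partial_t\bs u_{1n}\cdot\partial_t\bs u_{2n}=-2n\int_{Q_T}(\bs u_1-\bs u_{1n})\cdot(\bs u_2-\bs u_{2n})$. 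These have no sign and no uniform bound: $n\|\bs u_i-\bs u_{in}\|^2_{\bs L^2(Q_T)}$ is controlled only when $\bs u_i$ is $\tfrac12$-H\"older continuous in time with values in $\bs L^2(\Omega)$, a regularity that weak solutions do not possess, and neither property iii) nor $\bs w_n(0)=\bs 0$ touches them. Moreover, even if this limit could be justified, the globally-in-time formulation \eqref{IVf} would at best yield information at $t=T$; the inequality \say{for a.e.\ $t$} that you claim, and that is indispensable for Gr\"onwall in the presence of the convection, requires a time localization that test functions of the form $\rho_n\bs u_{i,n}$ cannot provide.

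The paper resolves both difficulties simultaneously with a different device: the test functions are $\bs v_{in}=\rho_n\big(\bs u_{in}+(-1)^i\vartheta\bs w_n\big)$, where $\vartheta$ is the piecewise linear cutoff equal to a small constant $\theta$ on $[0,t]$ and vanishing after $t+\delta$; note that $\bs u_{in}+(-1)^i\vartheta\bs w_n$ is a convex combination of $\bs u_{1n}$ and $\bs u_{2n}$, hence admissible after rescaling. Thus each inequality is tested with a small, time-localized perturbation of its \emph{own} regularization: the dangerous quadratic terms then appear only in the combination $\tfrac1n\big(|\partial_t\bs u_{1n}|^2+|\partial_t\bs u_{2n}|^2-\vartheta|\partial_t\bs w_n|^2\big)\ge0$, which is nonnegative precisely because $\vartheta\le\tfrac12$ (so the smallness of $\theta$ is structural, not cosmetic), while the cutoff produces the estimate at an arbitrary time $t$. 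Passing to the limit $n\to\infty$, letting $\delta\to0^+$, dividing by $\theta$ and letting $\theta\to0^+$ yields $\|\bs u_1(t)-\bs u_2(t)\|^2_{_2}\le\tfrac{8M^2}\nu\|\bs u_1-\bs u_2\|^2_{\bs L^2(Q_t)}$, and Gr\"onwall concludes. Without this (or an equivalent) mechanism your plan stalls exactly at the step you yourself flag as \say{delicate}; what you describe as expected from the \say{monotonicity machinery} of the cited references is, as the paper emphasizes in its introduction, precisely what those references do not give once the convective term and merely continuous-in-time thresholds are present.
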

	\begin{proof} Suppose there exist two solutions, $\bs u_1,\bs u_2\in\mathscr K_\psi$, of the variational inequality\eqref{IVf}.
		
		Let $\{\bs u_{in}\}_n$, $i=1,2$, be the regularizing sequences of $\bs u_i$, both with initial condition $\bs u^0$ and observe that $\un$ and $\unn$ belong to $\mathscr K_{\psi_n}$, for $\psi_n$ defined in \eqref{psin}.
		
		Given $t\in(0,T)$ and $\theta\in(0,1)$, let $\delta\in(0,T-t)$ and set 
		$$\vartheta(s)=\theta\text{ if }s\in[0,t], \quad\vartheta(s)=\theta(1-\tfrac1\delta(s-t))\text{ if }s\in(t,t+\delta)\quad\text{ and}\quad\vartheta(s)=0\text{ if }s\in[t+\delta,T].$$
		
		Setting $\bs w=\bs u_1-\bs u_2$ and $\bs w_n=\un-\unn$, we observe that
		$$\bs w_n+\tfrac1n\partial_t\bs w_n=\bs w,\qquad\bs w_n(0)=0.$$
		
		Choosing $\psi_*$ a common positive lower bound of $\psi$ and $\psi_n$, we set
		\begin{equation*}
			\eps_n=\|\psi-\psi_n\|_{_\infty}\quad\text{and}\quad\rho_n=\frac{\psi_*}{\psi_*+\eps_n},
		\end{equation*}
		and we observe that $\rho_n\bs u_{in}\in\mathscr K_{\psi}$. Thus, we can easily verify that the functions
		\begin{equation*} 
			\bs v_{in}=\rho_n\big(\bs u_{in}+(-1)^i\vartheta\bs w_n\big)\in\mathscr K_\psi,\quad i=1,2.
		\end{equation*}
		So they can be used as test functions in the variational inequality \eqref{IVf}.
		
		Therefore, for $i=1,2$, we have
		\begin{multline*}
			\int_0^T\langle\partial_t \big(\rho_n(\bs u_{in}+(-1)^i\vartheta\bs w_n)\big),\bs u_i-\rho_n(\bs u_{in}+(-1)^i\vartheta\bs w_n)\rangle\\
			+\nu\int_{Q_T}\bD\bs u_i:\bD\big(\bs u_i-\rho_n(\bs u_{in}+(-1)^i\vartheta\bs w_n)\big)
			\le\int_{Q_T}(\bs u_i\otimes\bs u_i):\nabla\big(\bs u_i-\rho_n(\bs u_{in}+(-1)^i\vartheta\bs w_n)\big)\\
			+\int_{Q_T}\bs f\cdot\big(\bs u_i-\rho_n(\bs u_{in}+(-1)^i\vartheta\bs w_n)\big)+\tfrac12(1-\rho_n)^2\int_\Omega|\bs u^0|^2.
		\end{multline*}
		We sum both inequalities and write
		$$\bs u_i-\rho_n(\bs u_{in}+(-1)^i\vartheta\bs w_n)=(\bs u_i-\bs u_{in})+(1-\rho_n)\bs u_{in}+(-1)^{i+1}\vartheta\rho_n\bs w_n.$$
		
		\vspace{2mm}
		
		In order to estimate all the terms of the resulting inequality as $\bs u_{in}\underset{n}\rightarrow\bs u_i$, the terms in the left hand side from below and the other ones from above, we sum: 
		\begin{itemize}
			\item[--] all the parcels where appear the operator $\partial_t$, denoting this sum by  $A_n$;
			\item[--]  all the parcels where appear the operator $\bD$, denoting the sum by $B_n$;
			\item[--] all the parcels where appear the operator $\otimes$, denoting the sum by $C_n$;
			\item[--] all the parcels where appear the function $\bs f$, denoting  the sum by $D_n$;
		\end{itemize}
		so that
		\begin{equation}
			\label{abcd}
			A_n+B_n\le C_n+D_n+E_n,
		\end{equation}
		where
		\begin{equation}
			E_n=(1-\rho_n)^2\int_\Omega|\bs u^0|^2 \underset{n}\rightarrow 0.
		\end{equation}
		
		We start with the terms involving differentiation with respect to $t$.
		\begin{align*}
			A_n&=\rho_n\int_0^T\int_\Omega\tfrac1n\Big(|\partial_t\un|^2+|\partial_t\unn|^2-\tfrac\vartheta{n}|\partial_t\bs w_n^2|\Big)\\
			\nonumber	&\quad+\tfrac{\rho_n}2(1-\rho_n)\big(\|\un(T)\|_{_2}^2-\|\bs u^0\|_{_2}^2+\|\unn(T)\|_{_2}^2-\|\bs u^0\|_{_2}^2|\big)\\
			\nonumber	&\quad+\rho_n\int_0^T\int_\Omega-\vartheta'\big(\bs w_n\cdot\big(\bs w-\bs w_n\big)+(1-\rho_n)|\bs w_n|^2+2\vartheta\rho_n|\bs w_n|^2\big)\\
			\nonumber	&\quad+\rho_n\int_0^T\big(\rho_n\vartheta-(1-\rho_n)\vartheta-2\rho_n\vartheta^2\big)\langle\partial_t\bs w_n,\bs w_n\rangle,
		\end{align*}
		recalling that $\bs u_i-\bs u_{in}=\frac1n\partial_t\bs u_{in}$ and $\bs w-\bs w_n=\frac1n\partial_t\bs w_n$.
		
		Observe that
		$$\rho_n\int_0^T\int_\Omega\tfrac1n\Big(|\partial_t\un|^2+|\partial_t\unn|^2-\tfrac\vartheta{n}|\partial_t\bs w_n|^2\Big)\ge0$$
		as long as $0<\vartheta\le\frac12$. Moreover, as $\rho_n\underset{n}\rightarrow1$, we have
		$$\lim_n\Big(\tfrac{\rho_n}2(1-\rho_n)\big(\|\un(T)\|_{_2}^2-\|\bs u^0\|_{_2}^2+\|\unn(T)\|_{_2}^2-\|\bs u^0\|_{_2}^2\big)\Big)=0,$$
		$$\lim_n \rho_n\int_0^T\int_\Omega-\vartheta'\big(\bs w_n\cdot\big(\bs w-\bs w_n\big)+(1-\rho_n)|\bs w_n|^2=0\quad\text{and}\quad-2\rho_n^2\int_0^T\int_\Omega\vartheta'\vartheta|\bs w_n|^2\ge 0,$$
		since $\|\unn(T)\|_{_2}$, $\|\un(T)\|_{_2}$, $\|\bs w_n\|_{_2}$ are bounded independently of $n$ and $\bs w_n\underset{n}\rightarrow\bs w$ in $\bs L^2(Q_T)$. 
		
		Now we split the integral in $(0,T)$ with respect to $t$ of the remaining terms of $A_n$:
		
		(i) between $0$ and $t$, recalling that $\vartheta(s)=\theta$ for $s\in[0,t]$ and $\bs w_n(0)=\bs 0$,
		
		\begin{multline*}
			\tfrac{\rho_n}2\int_0^t\big((2\rho_n-1)\vartheta-2\rho_n\vartheta^2\big)\int_\Omega\partial_t|\bs w_n|^2(t)=\tfrac{\rho_n}2\big((2\rho_n-1)\theta
			\\
			-2\rho_n\theta^2\big)\|\bs w_n(t)\|_{_2}^2\underset{n}\rightarrow\theta(\tfrac12-\theta)\|\bs u_1(t)-\bs u_2(t)\|^2_{_2};
		\end{multline*}
		
		(ii) between $t$ and $t+\delta$, as $\vartheta'=-\tfrac\theta\delta$, $\vartheta(t+\delta)=0$ and $\vartheta(t)=\theta$,
		
		\begin{align*}
			\tfrac{\rho_n}2\int_t^{t+\delta}&\big((2\rho_n-1)\vartheta-2\rho_n\vartheta^2\big)\int_\Omega\partial_t|\bs w_n|^2=-\tfrac12\big((2\rho_n-1)\theta-2\rho_n\theta^2\big)\|\bs w_n(t)\|_{_2}^2\\
			&+\tfrac{\rho_n}2\int_t^{t+\delta}\big((2\rho_n-1)\tfrac\theta\delta-4\rho_n\vartheta(s)\tfrac\theta\delta\big)\|\bs w_n(s)\|_{_2}^2\\
			&\underset{n}\rightarrow -\theta(\tfrac12-\theta)\|\bs u_1(t)-\bs u_2(t)\|^2_{_2}+\tfrac\theta\delta\int_t^{t+\delta}\big(\tfrac12-2\vartheta(s)\big)\|\bs u_1(s)-\bs u_2(s)\|^2_{_2}\,ds;
		\end{align*}

		(iii) between $t+\delta$ and $T$, since $\vartheta(s)=0$ for $s>t+\delta$,
		\begin{equation*}
			\tfrac12\int_{t+\delta}^T\big((2\rho_n-1)\vartheta-2\rho_n\vartheta^2\big)\int_\Omega\partial_t|\bs w_n|^2=0.
		\end{equation*} 
		
		Gathering all the information about $A_n$, we conclude that
		\begin{equation*}
			\varliminf_n A_n\ge \tfrac\theta\delta\int_t^{t+\delta}\big(\tfrac12-2\vartheta(s)\big)\|\bs u_1(s)-\bs u_2(s)\|^2_{_2}\,ds
		\end{equation*}
		and, letting $\delta\rightarrow0^+$ afterwards, for $0<\vartheta\le\frac12$, we get for all $t\in(0,T)$:
		\begin{equation}\label{an}
			\varliminf_n A_n\ge \theta\big(\tfrac12-2\theta\big)\|\bs u_1(t)-\bs u_2(t)\|^2_{_2}.
		\end{equation}
		
		Now, we estimate $B_n$ also from below,
		\begin{align*}
			B_n&=\nu	\int_{Q_T}\bD\bs u_1:\Big(\bD(\bs u_1-\bs u_{1n})+(1-\rho_n)\bD\bs u_{1n}+\vartheta\rho_n\bD\bs w_n\Big)\\
			&\quad+	\nu\int_{Q_T}\bD\bs u_2:\Big(\bD(\bs u_2-\bs u_{2n})+(1-\rho_n)\bD\bs u_{2n} -\vartheta\rho_n\bD\bs w_n\Big)\\
			&\ge -\nu\Big(\|\bD\un\|_{_2}\|\bD(\bs u_1-\bs u_{1n})\|_{_2}+\|\bD\bs u_2\|_{_2}\|\bD(\bs u_2-\bs u_{2n})\|_{_2}\Big)\\
			&\quad-2(1-\rho_n)\nu(\psi^*)^2 +\rho_n\nu\int_{Q_T}\vartheta\bD(\bs u_1-\bs u_2):\bD\bs w_n.
		\end{align*}
		As, for $i=1,2$, $\|\bD \bs u_{in}\|_{_2}$ are bounded independently of $n$, ${\|\bD(\bs u_i-\bs u_{in})\|_{_2}}\underset{n}\rightarrow0$ and $\vartheta(s)=\theta$ for $0<s<t$, $\vartheta(s)=0$ for $s>t+\delta$, we have
		\begin{equation}\label{bn}
			\lim_n B_n\ge \nu\int_0^{t+\delta}\vartheta(s)\|\bD(\bs u_1(s)-\bs u_2(s))\|^2_{_2}ds\ge\theta\nu\int_0^t\|\bD(\bs u_1(s)-\bs u_2(s))\|^2_{_2}ds.
		\end{equation}
		
		We estimate $C_n$ from above,
		\begin{align*}
			C_n&=	\int_{Q_T}(\bs u_1\otimes\bs u_1):\nabla\Big((\bs u_1-\bs u_{1n})+(1-\rho_n)\bs u_{1n}+\vartheta\rho_n\bs w_n)\Big)\\
			&\quad+\int_{Q_T}(\bs u_2\otimes\bs u_2):\nabla\Big((\bs u_2-\bs u_{2n})+(1-\rho_n)\bs u_{2n}-\vartheta\rho_n\bs w_n)\Big)\\
			&\le d^2 M^2\sqrt{2}\Big(\|\bD(\bs u_1-\bs u_{1n})\|_{_2}+\|\bD(\bs u_2-\bs u_{2n})\|_{_2}\Big)+2(1-\rho_n)d^2M^2 \psi^*\sqrt{2|Q_T|}\\
			&\quad + \int_{Q_T}\vartheta\rho_n\Big((\bs u_1\otimes\bs u_1)-(\bs u_2\otimes\bs u_2)\Big):\nabla\bs w_n
		\end{align*}
		obtaining, as $\|\bD(\bs u_i-\bs u_{in})\|_{_2}\underset{n}\rightarrow0$ for $i=1,2$, $\rho_n\underset{n}\rightarrow1$ and $\vartheta(s)=0$ for $s>t+\delta$,
		\begin{align*}
			\lim_n C_n&\le	\int_0^{t+\delta}\int_\Omega\vartheta\Big((\bs u_1\otimes\bs u_1)-(\bs u_2\otimes\bs u_2)\Big):\nabla(\bs u_1-\bs u_2)\\
			&\le \theta\int_0^t\int_\Omega\Big((\bs u_1\otimes\bs u_1)-(\bs u_2\otimes\bs u_2)\Big):\nabla(\bs u_1-\bs u_2)+4M^2\sqrt 2\psi^*\delta\\
			&\le2\sqrt{2}M\theta\|\bs u_1-\bs u_2\|_{\bs L^2(Q_t)}\|\bD(\bs u_1-\bs u_2)\|_{\bs L^2(Q_t)}+4M^2\sqrt 2\psi^*\delta\\
			&\le\tfrac{\theta\nu}{2}\|\bD(\bs u_1-\bs u_2)\|_{\bs L^2(Q_t)}^2+\tfrac{4M^2\theta}\nu\|\bs u_1-\bs u_2\|^2_{\bs L^2(Q_t)}+4M^2\sqrt 2\psi^*\delta.
		\end{align*}
		and letting $\delta\rightarrow0^+$,
		\begin{equation}\label{cn}
			\lim_n C_n\le \tfrac{\theta\nu}{2}\|\bD(\bs u_1-\bs u_2)\|_{\bs L^2(Q_t)}^2+\tfrac{4M^2\theta}\nu\|\bs u_1-\bs u_2\|^2_{\bs L^2(Q_t)}.
		\end{equation}
		
		Finally,
		\begin{equation}\label{dn}
			\lim_n D_n=\lim_n \int_{Q_T}\bs f\cdot\big((\bs u_1-\bs u_{1n})+(1-\rho_n)\bs u_{1n}+(\bs u_2-\bs u_{2n})+(1-\rho_n)\bs u_{2n}\big)=0.	
		\end{equation}
		
		Therefore, taking the limit in \eqref{abcd}, using \eqref{an}, \eqref{bn}, \eqref{cn} we obtain, for all $t\in(0,T)$,
		\begin{multline*}
			\theta	\big(\tfrac12-2\theta\big)\|\bs u_1(t)-\bs u_2(t)\|^2_{_2}+\theta\nu\|\bD(\bs u_1-\bs u_2)\|^2_{\bs L^2(Q_t)}\\
			\le  \tfrac{\theta\nu}2\|\bD(\bs u_1-\bs u_2)\|_{\bs L^2(Q_t)}^2+\tfrac{4M^2\theta}\nu\|\bs u_1-\bs u_2\|^2_{\bs L^2(Q_t)}.
		\end{multline*}
		
		Dividing by $\theta$ and letting $\theta\rightarrow0$, we obtain
		\begin{equation*}
			\|\bs u_1(t)-\bs u_2(t)\|^2_{_2}\le\tfrac{ 8M^2}\nu\|\bs u_1-\bs u_2\|^2_{\bs L^2(Q_t)}
		\end{equation*}
		and, setting $y(t)=\|\bs u_1(t)-\bs u_2(t)\|^2_{_2}$, by using the Gr\"onwall's inequality, we obtain
		$$y(t)\le y(0)e^{\frac{8M^2}\nu t}=0$$
		concluding $\bs u_1=\bs u_2$ a.e. and therefore the uniqueness of weak solutions.
		
		To prove the continuous dependence estimates for $\nu>0$ fixed, we approximate the $\psi_i$, in $C\big([0,T];L^\infty(\Omega)\big)$, by sequences of functions $\psi_i^n\in W^{1,\infty}\big(0,T;L^\infty(\Omega)\big)$ as in Theorem \ref{Ew} and we set $\bs u_i^n=S(\bs f_i,\bs u^0_i,\psi_i^n)$, the unique solutions of the strong variational inequality \eqref{IV} with the respective data, as we have done in Lemma \ref{Lapriori}. Then, by Theorem \ref{cd}, we obtain
		\begin{equation}\label{ene}
			\|\bs u_1^n-\bs u_2^n\|_{L^\infty\big(0,T;\bs L^2(\Omega)\big)}\\
			\le  e^{C_\nu T}\big(\|\bs u^0_1-\bs u^0_2\|^2_{_2}+\|\bs f_1-\bs f_2\|^2_{_2}+C_*\|\psi_1^n-\psi_2^n\|_{_\infty}\big),
		\end{equation}
		were  $C_*$ and $C_\nu$ are defined in \eqref{constants} and are independent of $n$.
		
		Each sequence $\{\bs u_i^n\}$ is a Cauchy sequence in $C\big([0,T];\bs L^2(\Omega)\big)$ converging to $\bs u_i=S(\bs f_i,\bs u_i^0,\psi_i)$, the unique solution of the respective weak variational inequality \eqref{IVf} for $i=1,2$. Letting $n\rightarrow\infty$ in \eqref{ene}, we conclude the continuous dependence estimate \eqref{Ff} and, in a similar way, \eqref{FF} also holds for weak solutions.
	\end{proof}
	
	\section{Existence of generalised Lagrange multipliers}
	
	The strong Lagrange multiplier problem, associated to the strong solutions of the variational inequality for $\nu\ge0$ consists of finding  $(\lambda,\bs u)=(\lambda_\nu,\bs u_\nu)\in L^\infty(Q_T)'\times\big(\mathscr V_2\cap H^1\big(0,T;\bs L^2(\Omega)\big)\big)$ such that
	\begin{equation}\label{mlforte}
		\begin{cases}
			\displaystyle\int_{Q_T}\partial_t\bs u\cdot\bs v+\!\bs\llangle\lambda\bD\bs u,\bD\bs v\bs\rrangle+\nu\int_{Q_T}\bD\bs u:\bD\bs v\\
			\hfill{\displaystyle-\int_{Q_T}(\bs u\otimes\bs u):\nabla\bs v=\int_{Q_T}\bs f\cdot\bs v,\quad\forall\bs v\in \mathscr V_\infty}\\
			\bs u(0)=\bs u^0,\\
			|\bD\bs u|\le\psi\text{ in }Q_T,\quad\lambda\ge0\  \text{ and }\ \lambda\big(|\bD\bs u|-\psi\big)=0\  \text{ in }L^\infty(Q_T)'.
		\end{cases}
	\end{equation}
	Here $\bs\llangle\,\cdot\,,\,\cdot\,\bs\rrangle$ denotes the duality pairing between the Banach spaces $\bs L^\infty(Q_T)'$ and $\bs L^\infty(Q_T)$ and, given $\mu\in L^\infty(Q_T)'$ and $\bs\Xi\in \bs L^\infty(Q_T)$, $\mu\bs\Xi\in \bs L^\infty(Q_T)'$ is defined as follows:
	$$\bs\llangle\mu\,\bs\Xi,\bs\Phi\bs\rrangle=\langle\mu,\bs\Xi\cdot\bs\Phi\bs\rangle,\qquad\forall\Phi\in \bs L^\infty(Q_T),$$
	where $\langle\,\cdot\,,\,\cdot\,\rangle$, in the right hand side, is the duality pairing between $L^\infty(Q_T)'$ and $L^\infty(Q_T)$.

	We also define the weak Lagrange multiplier problem, associated to the weak solution of the variational inequality:    to find $(\lambda,\bs u)=(\lambda_\nu,\bs u_\nu)\in L^\infty(Q_T)'\times  \mathscr V_2$ such that
	\begin{equation}\label{mlfraco}
		\begin{cases}
			\displaystyle-\int_{Q_T}\bs u\cdot\partial_t\bs v+\bs\llangle\lambda\bD\bs u,\bD\bs v\bs\rrangle+\nu\int_{Q_T}\bD\bs u:\bD\bs v
			-\int_{Q_T}(\bs u\otimes\bs u):\nabla\bs v\\
			\hfill{\displaystyle=\int_{Q_T}\bs f\cdot\bs v+\int_\Omega\bs u^0\cdot\bs v(0),\quad\forall\bs v\in \mathscr V_\infty\cap\mathscr W_2,\  \bs v(T)=\bs 0,}
			\\			|\bD\bs u|\le\psi\text{ in } Q_T,\quad\lambda\ge0\ \text{ and }\ \lambda\big(|\bD\bs u|-\psi\big)=0\ \text{ in }L^\infty(Q_T)',
		\end{cases}
	\end{equation}
	where $\mathscr W_2$ is defined in \eqref{w2}.
	
	\begin{theorem}
		\label{tmlforte}
		Assume \eqref{*}, \eqref{psi} and \eqref{assumptions}. Then, for $\nu\ge0$, problem \eqref{mlforte} has a solution $(\lambda,\bs u)=(\lambda_\nu,\bs u_\nu)\in L^\infty(Q_T)'\times\big(H^1\big(0,T;\bs L^2(\Omega)\big)\cap\mathscr V_\infty^\cap\big)$. In addition, $\bs u$ is a strong solution of the variational inequality \eqref{IV}, which is unique in the case $\nu>0$.
	\end{theorem}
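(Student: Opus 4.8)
The plan is to read the Lagrange multiplier off the penalty term of the approximating problem \eqref{app} used in the proof of Theorem \ref{1}. For fixed $\nu>0$ let $\bs u_\eps$ solve \eqref{app} and set $\lambda_\eps=\widehat k_\eps=k_\eps\big(|\bD\bs u_\eps|^2-\psi^2\big)\ge0$, so that, after integration in $t$, $\bs u_\eps$ satisfies
\begin{equation*}
\int_{Q_T}\partial_t\bs u_\eps\cdot\bs v+\int_{Q_T}(\lambda_\eps+\nu)\bD\bs u_\eps:\bD\bs v-\int_{Q_T}(\bs u_\eps\otimes\bs u_\eps):\nabla\bs v+\eps\int_{Q_T}|\bD\bs u_\eps|^{q-2}\bD\bs u_\eps:\bD\bs v=\int_{Q_T}\bs f\cdot\bs v
\end{equation*}
for every $\bs v\in\mathscr V_\infty$. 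By the second estimate in \eqref{est2}, $\lambda_\eps$ is bounded in $L^1(Q_T)$, hence in $L^\infty(Q_T)'$; being nonnegative it has a weak-$*$ cluster point $\lambda\ge0$ in $L^\infty(Q_T)'$, and since $|\bD\bs u|\le\psi^*$ gives $\bD\bs u\in\bs L^\infty(Q_T)$, the element $\lambda\bD\bs u\in\bs L^\infty(Q_T)'$ is well defined. The regularity $\bs u\in H^1\big(0,T;\bs L^2(\Omega)\big)\cap\mathscr V_\infty^\cap$ and the final uniqueness claim for $\nu>0$ are inherited directly from Theorems \ref{1} and \ref{cd}.

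A decisive preliminary step is to upgrade the weak convergence $\bD\bs u_\eps\lraup\bD\bs u$ of Theorem \ref{1} to strong convergence in $\bs L^2(Q_T)$. Testing the penalised equation with $\bs u_\eps-\bs u\in\mathbb V_2$, I would use that on $\{\lambda_\eps>0\}$ one has $|\bD\bs u_\eps|>\psi\ge|\bD\bs u|$, whence
\begin{equation*}
\lambda_\eps\,\bD\bs u_\eps:\bD(\bs u_\eps-\bs u)=\lambda_\eps\big(|\bD\bs u_\eps|^2-\bD\bs u_\eps:\bD\bs u\big)\ge\lambda_\eps\,|\bD\bs u_\eps|\big(|\bD\bs u_\eps|-\psi\big)\ge0 .
\end{equation*}
The force and convective terms vanish in the limit by the strong convergence $\bs u_\eps\to\bs u$ in $L^{q-1}\big(0,T;\bs C(\overline\Omega)\big)$, the term in $\partial_t\bs u_\eps$ by a strong–weak pairing, and the $\eps$-term by \eqref{est3}; together with weak lower semicontinuity this forces simultaneously $\int_{Q_T}\lambda_\eps\,\bD\bs u_\eps:\bD(\bs u_\eps-\bs u)\to0$ and $\nu\int_{Q_T}|\bD\bs u_\eps|^2\to\nu\int_{Q_T}|\bD\bs u|^2$, i.e. $\bD\bs u_\eps\to\bD\bs u$ strongly in $\bs L^2(Q_T)$ (and, by interpolation with \eqref{est3}, in every $\bs L^p(Q_T)$, $p<q-1$).

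With these ingredients I would pass to the limit in the penalised equation: the term in $\partial_t\bs u_\eps$, the viscous part $\nu\,\bD\bs u_\eps:\bD\bs v$, the convective term $(\bs u_\eps\otimes\bs u_\eps):\nabla\bs v$ and the force converge to their counterparts in \eqref{mlforte} — the convective term being the genuinely new point with respect to \cite{AzevedoRodriguesSantos2020}, settled by the uniform convergence of $\bs u_\eps$ — and the $\eps$-term disappears by \eqref{est3}, so that it remains to identify $\lim_\eps\int_{Q_T}\lambda_\eps\,\bD\bs u_\eps:\bD\bs v=\bs\llangle\lambda\bD\bs u,\bD\bs v\bs\rrangle$. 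This identification, together with the complementarity $\lambda\big(|\bD\bs u|-\psi\big)=0$, is the \emph{main obstacle}. Since $\bs\llangle\lambda\bD\bs u,\bD\bs v\bs\rrangle=\langle\lambda,\bD\bs u:\bD\bs v\rangle=\lim_\eps\int_{Q_T}\lambda_\eps\,(\bD\bs u:\bD\bs v)$ by definition of the weak-$*$ limit, both conditions reduce to controlling $\int_{Q_T}\lambda_\eps\,(\bD\bs u_\eps-\bD\bs u):\bs\Phi$ for $\bs\Phi\in\bs L^\infty(Q_T)$ and $\int_{Q_T}\lambda_\eps\big(|\bD\bs u_\eps|-\psi\big)$: for the latter the integrand is $\le0$ on $\{\lambda_\eps>0\}$, giving $\langle\lambda,\psi-|\bD\bs u|\rangle\le0$, while $\lambda\ge0$ and $\psi-|\bD\bs u|\ge0$ yield the reverse inequality and hence \eqref{mlforte}$_3$; for the former I would exploit the strong convergence of $\bD\bs u_\eps$ together with the nonnegative, concentrating structure of $\lambda_\eps$, following and adapting the multiplier passage of \cite{AzevedoRodriguesSantos2020}. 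This is the step needing the most care, precisely because $\lambda_\eps$ is only bounded in $L^1(Q_T)$ and may develop a singular part, so that the product $\lambda_\eps\bD\bs u_\eps$ must be handled in the weak-$*$ topology of $\bs L^\infty(Q_T)'$ rather than by a naive Hölder estimate.

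For the degenerate case $\nu=0$ I would not penalise directly but pass to the limit $\nu\to0$ in the pairs $(\lambda_\nu,\bs u_\nu)$ just constructed. The $L^1(Q_T)$-bound on $\lambda_\nu$ coming from \eqref{est2} is independent of $\nu$, so $\{\lambda_\nu\}$ stays bounded in $L^\infty(Q_T)'$ and admits a weak-$*$ cluster point $\lambda\ge0$, while $\bs u_\nu\to\bs u$ and $\bD\bs u_\nu\lraup\bD\bs u$ exactly as in the $\nu=0$ part of Theorem \ref{1}; since the viscous term $\nu\int_{Q_T}\bD\bs u_\nu:\bD\bs v$ vanishes in the limit, the pair $(\lambda,\bs u)$ solves \eqref{mlforte} with $\nu=0$. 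As the sign and complementarity conditions are stable under both the $\eps\to0$ and the $\nu\to0$ passages, this closes the argument.
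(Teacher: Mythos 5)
Your overall skeleton (penalise via \eqref{app}, extract weak-$*$ limits of $\widehat k_\eps$ in $L^\infty(Q_T)'$, then send $\nu\to0$ for the inviscid case) is the paper's, and your sign computation $\lambda_\eps\,\bD\bs u_\eps:\bD(\bs u_\eps-\bs u)\ge0$ together with the resulting strong $\bs L^2(Q_T)$ convergence of $\bD\bs u_\eps$ is correct. But there is a genuine gap at precisely the point you flag as the ``main obstacle'': the identification $\lim_\eps\int_{Q_T}\lambda_\eps\,\bD\bs u_\eps:\bD\bs v=\bs\llangle\lambda\bD\bs u,\bD\bs v\bs\rrangle$ and the complementarity condition are the \emph{entire content} of this theorem beyond Theorem \ref{1}, and you leave them to ``following and adapting'' \cite{AzevedoRodriguesSantos2020}. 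Worse, the tool you invest in cannot close that gap: strong convergence of $\bD\bs u_\eps$ in $\bs L^2(Q_T)$ (i.e.\ with respect to Lebesgue measure) gives no control of $\int_{Q_T}\lambda_\eps\,(\bD\bs u_\eps-\bD\bs u):\bs\Phi$, because $\lambda_\eps$ is only bounded in $L^1(Q_T)$ and may concentrate on sets of vanishing measure on which $|\bD(\bs u_\eps-\bs u)|$ remains of order one (take $\lambda_\eps\sim\eps^{-1}\chi_{E_\eps}$ with $|E_\eps|\sim\eps$: then $\|\bD(\bs u_\eps-\bs u)\|_{_2}\to0$ is compatible with $\int\lambda_\eps|\bD(\bs u_\eps-\bs u)|\not\to0$). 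What is actually needed is convergence \emph{weighted by} $\lambda_\eps$, namely $\langle\lambda_\eps,|\bD(\bs u_\eps-\bs u)|^2\rangle\to0$, and this is what the paper's mechanism produces: introduce the second weak-$*$ cluster point $\bs\Xi$ of the $\bs L^1$-bounded products $\lambda_\eps\bD\bs u_\eps$, prove the energy identity $\langle\lambda,\psi^2\rangle=\bs\llangle\bs\Xi,\bD\bs u\bs\rrangle$ (see \eqref{lambdaLambda}; here the constraint inequality $\langle\lambda_\eps,\psi^2\rangle\le\langle\lambda_\eps,|\bD\bs u_\eps|^2\rangle$ enters), deduce $\langle\lambda_\eps,|\bD(\bs u_\eps-\bs u)|^2\rangle\to0$ from the expansion of the square, and only then apply the Cauchy--Schwarz inequality \emph{relative to the positive functional} $\lambda_\eps$ to get $\bs\llangle\lambda_\eps\bD(\bs u_\eps-\bs u),\bs\xi\bs\rrangle\to0$, i.e.\ $\bs\Xi=\lambda\bD\bs u$; the complementarity then falls out of the same identities. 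Ironically, your computation $\int_{Q_T}\lambda_\eps\,\bD\bs u_\eps:\bD(\bs u_\eps-\bs u)\to0$ is a good substitute for part of this argument (it yields $\lim_\eps\langle\lambda_\eps,|\bD\bs u_\eps|^2\rangle=\bs\llangle\bs\Xi,\bD\bs u\bs\rrangle$ without testing the limit equation with $\bs u$), but without $\bs\Xi$ and the weighted Cauchy--Schwarz step the proof does not close.

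The same gap recurs, unacknowledged, in your $\nu\to0$ passage: the assertion that ``the sign and complementarity conditions are stable under both passages'' is exactly what must be proved, and it is where the paper spends most of its proof — establishing $\|\partial_t\bs u_\nu\|_{_2}\le C$, $\bs u_\nu\to\bs u$ in $C(\overline Q_T)$ by Simon's compactness, $\langle\lambda_\nu,|\bD(\bs u_\nu-\bs u)|^2\rangle\to0$ via the exact complementarity $\lambda_\nu|\bD\bs u_\nu|^2=\lambda_\nu\psi^2$ available at level $\nu$, and again the $\lambda_\nu$-weighted Cauchy--Schwarz to get $\bs\Xi=\lambda\bD\bs u$. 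Two smaller points: in your complementarity discussion the sign is slipped (on $\{\lambda_\eps>0\}$ one has $|\bD\bs u_\eps|>\psi$, so $\lambda_\eps(|\bD\bs u_\eps|-\psi)\ge0$, equivalently $\lambda_\eps(\psi-|\bD\bs u_\eps|)\le0$); and when testing with $\bs u_\eps-\bs u$ the $\eps$-term does not vanish by \eqref{est3} — its part $\eps\int|\bD\bs u_\eps|^q$ is merely bounded — but it is bounded below by $-o(1)$, which is what your inequality actually needs. Finally, note that the paper proves the ``in addition'' claim (that $\bs u$ solves \eqref{IV}) directly from the multiplier problem via the inequality \eqref{ineq} and a localisation in time, rather than by identifying $\bs u$ with the solution of Theorem \ref{1}; for $\nu=0$, where uniqueness is unavailable, your inheritance argument does not apply and some such direct argument is required.
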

	\begin{proof}
		We prove first the existence of solution of problem \eqref{mlforte} when $\nu>0$, following the proof of Theorem 2.1 of \cite{AzevedoRodriguesSantos2020}.
		
		We consider the family of approximating problems \eqref{app} and we recall that the solution $\bs u_\eps\in \mathscr V_2\cap H^1\big(0,T;\bs L^2(\Omega)\big)$ and satisfies the {\em a priori} estimates \eqref{est1}, \eqref{est2} and \eqref{dt}.
		
		From the estimates \eqref{est2}, arguing as in Lemma 4.1 of \cite{AzevedoRodriguesSantos2020}, we get
		\begin{equation*}
			\|\widehat k_\eps\|_{L^\infty(\Omega)'}\leq\|\widehat k_\eps\|_{L^1(Q_T)}\le C,\qquad	\|\widehat k_\eps\bD \bs u_\eps\|_{\bs L^\infty(Q_T)'}\leq\|\widehat k_\eps\bD \bs u_\eps\|_{\bs L^1(Q_T)}\le C.
		\end{equation*}
		So, we can take a generalized subsequence, still denoted by $\eps\to 0$, a function $\bs u\in\mathscr V_2\cap H^1\big(0,T;\bs L^2(\Omega)\big)$, and, by the Banach-Alaoglu-Bourbaki theorem, there exist $\lambda\in L^\infty(Q_T)'$ and $\bs\Xi\in\bs L^\infty(Q_T)'$ with the following convergences:
		\begin{equation*}
			\bs u_\eps\underset{\eps}\rightarrow\bs u \text{ in }C\big([0,T];\bs L^2(\Omega)\big), \quad
			\bD\bs u_\eps\underset{\eps}\lraup\bD\bs u\text{ in }\bs L^2(Q_T), \quad\partial_t\bs u_\eps\underset{\eps}\lraup\partial_t\bs u\text{ in }\bs L^2(Q_T),
		\end{equation*}
		\begin{equation*}
			\widehat k_\eps\underset\eps\lraup\lambda\text{ in }L^\infty(Q_T)'\text{-weak*}\quad\text{ and }\quad\widehat k_\eps\bD\bs u_\eps \lraup\bs\Xi\text{ in }\bs L^\infty(Q_T)'\text{-weak*}.
		\end{equation*}
		Hence we have $\bs u(0)=\bs u^0$, and the uniform $L^1$ estimate on $\widehat k_\eps$ and the uniform $\bs L^2$ estimate on $\bD\bs u_\eps$ allow us to conclude that $|\bD\bs u|\le\psi,\text{ in } Q_T$, exactly as in Lemma 4.2 of \cite{AzevedoRodriguesSantos2020}. Therefore $\bs u\in L^\infty\big(0,T;\mathbb V_p\big)$, for all $1<p<\infty$.
		
		We can now let $\eps\rightarrow0$ in \eqref{app}, after integrating over $[0,T]$, obtaining, for any $\bs v\in \mathscr V_\infty$,
		\begin{equation*}
			\int_{Q_T}\partial_t\bs u\cdot\bs v+\bs\llangle\bs\Xi,\bD\bs v\bs\rrangle+\nu\int_{Q_T}\bD\bs u:\bD\bs v-\int_{Q_T}(\bs u\otimes\bs u):\nabla\bs v=\int_{Q_T}\bs f\cdot\bs v.
		\end{equation*}
		
		Exactly as in the proof of Theorem 2.1 of \cite{AzevedoRodriguesSantos2020} it follows 
		\begin{equation}
			\label{lambdaLambda}
			\bs\llangle\bs\Xi,\bD\bs u\bs\rrangle=\langle\lambda,|\bD\bs u|^2\rangle
		\end{equation}
		since the new term $\displaystyle\int_{Q_T}(\bs u\otimes\bs u:\nabla\bs u)=0$, which also implies the property
		\begin{equation*}
			\bs\llangle\bs\Xi,\bD\bs v\bs\rrangle=\bs\llangle\lambda\bD\bs u,\bD\bs v\bs\rrangle,\qquad\forall \bs v\in \mathscr V_\infty,
		\end{equation*}
		proving the variational equation of \eqref{mlforte} and the remaining equalities and inequalities in \eqref{mlforte} follow similarly.
		
		Now we consider the case $\nu=0$.
		
		Given $\nu>0$, we consider $(\lambda_\nu,\bs u_\nu)$ a solution of problem \eqref{mlforte} that is the limit of a generalized subsequence of $\widehat k_{\eps \nu}= k_\eps(|\bD\bs u_{\eps\nu}|^2-\psi^2)$, where $\bs u_{\eps\nu}$ is the solution of the approximating problem \eqref{app}. 
		
		We note first that
		$$\int_{Q_T}\partial_t\bs u_\nu\cdot\bs u_\nu+\langle\lambda_\nu,|\bD\bs u_\nu|^2\rangle+\nu\int_{Q_T}|\bD\bs u_\nu|^2=\int_{Q_T}\bs f\cdot\bs u_\nu+\int_\Omega|\bs u^0|^2,$$
		concluding that (see Remark \ref{lambda1}) 
		\begin{align*}
			\langle\lambda_\nu,|\bD\bs u_\nu|^2\rangle&\le\tfrac12\|\bs f\|^2_{_2}+\tfrac12\|\bs u_\nu\|^2_{_2}+\tfrac32\|\bs u^0\|_{_2}^2\\
			&\le \tfrac12\|\bs f\|^2_{_2}+(\psi^*)^2|\Omega|T/\lambda_1+\tfrac32\|\bs u^0\|^2_{_2}:=C_1.
		\end{align*}
		Following \cite{AzevedoRodriguesSantos2024},
		\begin{align*}
			\|\lambda_\nu\|_{L^\infty(Q_T)'}&=\sup_{\|\zeta\|_{L^\infty(Q_T)}\le 1}\big|\langle\lambda_\nu,\zeta\rangle\big|\le \sup_{\|\zeta\|_{L^\infty(Q_T)}\le 1}\langle\lambda_\nu,|\zeta|\rangle\le\langle\lambda_\nu,1\rangle\le\langle\lambda_\nu,\tfrac{\psi^2}{(\psi_*)^2}\rangle\\
			\nonumber&\le\tfrac1{(\psi_*)^2}\langle\lambda_\nu,|\bD\bs u_\nu|^2\rangle\le\tfrac{C_1}{(\psi_*)^2}.
		\end{align*}
		But then, setting $\bs\Xi_\nu=\lambda_\nu\bD\bs u_\nu$, we have the estimate
		\begin{equation*}
			\|\bs\Xi_\nu\|_{\bs L^\infty(Q_T)'}=\sup_{\|\bs\xi\|_{\bs L^\infty(Q_T)}\le 1}\big|\langle\lambda_\nu,\bD\bs u_\nu\cdot\bs\xi\rangle\big|\le\|\lambda_\nu\|_{L^\infty(Q_T)'}\|\bD\bs u_\nu\|_{\bs L^\infty(Q_T)}\le \tfrac{C_1}{(\psi_*)^2}\psi^*,
		\end{equation*}	
		as $|\bD\bs u_\nu|\le\psi^*$. By the estimate \eqref{dt}, we have
		$$\|\partial_t\bs u_\nu\|_{\bs L^2(Q_T)}\le\varliminf_{\eps\rightarrow0}\|\partial_t\bs u_{\eps,\nu}\|_{\bs L^2(Q_T)}\le C,$$
		where $C$ is a positive constant independent of $\eps$ and $\nu$.
		
		So, with the same arguments as in the proof of Theorem \ref{1}, we conclude that for a generalised subsequence
		\begin{align}\label{limiteLM}
			&	\partial_t\bs u_\nu\underset{\nu\rightarrow0}\lraup\partial_t\bs u\ \text{ in }\bs L^2(Q_T)\text{-weak},\quad\bD\bs u_\nu\underset{\nu\rightarrow0}\lraup\bD\bs u\ \text{ in }\bs L^\infty(Q_T)\text{-weak*},\\
			\nonumber&\bs u_\nu\underset{\nu\rightarrow0}\longrightarrow\bs u\ \text{ in }C(\overline Q_T),\\
			\nonumber&\lambda_\nu\underset{\nu\rightarrow0}\lraup\lambda\ \text{ in }L^\infty(Q_T)'\text{-weak*},\quad \bs\Xi_\nu\underset{\nu\rightarrow0}\lraup\bs\Xi\ \text{ in }\bs L^\infty(Q_T)'\text{-weak*},
		\end{align}
		being the two last convergences also a consequence of the Banach-Alaoglu-Bourbaki theorem. And $\bs u_\nu\underset{\nu\rightarrow0}\longrightarrow\bs u$  in $C(\overline Q_T)$ because this generalised sequence is bounded in $\{\bs v\in L^\infty\big(0,T;\bs W^{1,\infty}(\Omega)\big):\partial_t\bs v\in \bs L^2(Q_T)\}$ and this set is compactly included in $C(\overline Q_T)$ by \cite[Theorem 5]{Simon1987}.
		This is enough to pass to the limit, when $\nu\rightarrow0$ in \eqref{mlforte}, obtaining
		$$\int_{Q_T}\partial_t\bs u\cdot\bs v+\bs\llangle\bs\Xi,\bD\bs v\bs\rrangle-\int_{Q_T}(\bs u\otimes\bs u):\nabla\bs v
		=\int_{Q_T}\bs f\cdot\bs v,\qquad\forall\bs v\in \mathscr V_\infty.$$
		
		Observe that, as $\lambda_\nu\ge0$, we also have $\lambda\ge0$. Given $\omega$ any measurable subset of $Q_T$, let $\bs \xi\in\bs L^1(Q_T)$ be defined by $\bs\xi=\frac{\bD\bs u}{|\bD\bs u|}$ in $\omega\cap\{|\bD\bs u|\neq0\}$ and $\bs\xi=0$ otherwise. Then
		$$\int_\omega\psi\ge\int_\omega|\bD\bs u_\nu|\ge\int_\omega \bD\bs u_\nu\cdot\bs\xi\underset{\nu\rightarrow0}\longrightarrow\int_\omega \bD\bs u\cdot\xi=\int_\omega|\bD\bs u|$$
		and so $|\bD\bs u|\le\psi$ a.e. in $Q_T$.
		
		It remains to show that $\lambda\bD\bs u=\bs\Xi$ and that $\lambda(|\bD\bs u|-\psi)=0$.  But, recalling that $\langle\lambda_\nu,|\bD\bs u_\nu|^2\rangle=\langle\lambda_\nu,\psi^2\rangle$, 
		\begin{align*}
			\langle\lambda_\nu,\psi^2\rangle&=\langle\lambda_\nu,|\bD\bs u_\nu|^2\rangle\\
			&=\llangle\lambda_\nu\bD\bs u_\nu,\bD\bs u_\nu-\bD\bs u\rrangle+\llangle\lambda_\nu\bD\bs u_\nu,\bD\bs u\rrangle\\
			&=\int_{Q_T}\bs f\cdot(\bs u_\nu-\bs u)-\int_{Q_T}\partial_t\bs u_\nu\cdot(\bs u_\nu-\bs u)-\nu\int_{Q_T}\bD\bs u_\nu:\bD(\bs u_\nu-\bs u)\\
			&\hspace{0.5cm}+\int_{Q_T}\bs u_\nu\otimes\bs u_\nu:\nabla(\bs u_\nu-\bs u)+\llangle\lambda_\nu\bD\bs u_\nu,\bD\bs u\rrangle.
		\end{align*}
		Letting $\nu\rightarrow0$ and using \eqref{limiteLM}, we get
		$$\langle\lambda,\psi^2\rangle=\bs\llangle\bs\Xi,\bD\bs u\bs\rrangle$$
		concluding that
		$$\langle\lambda,|\bD\bs u|^2\rangle\le\langle\lambda,\psi^2\rangle=\bs\llangle\bs\Xi,\bD\bs u\bs\rrangle.$$
		On the other hand, recalling that $\lambda_\nu|\bD\bs u_\nu|^2=\lambda_\nu\psi^2$,
		\begin{align*}
			0\le\langle\lambda_\nu,|\bD(\bs u_\nu-\bs u)|^2\rangle&=\langle\lambda_\nu,|\bD\bs u_\nu|^2\rangle-2\llangle\lambda_\nu\bD\bs u_\nu,\bD\bs u\rrangle+\langle\lambda_\nu,|\bD\bs u|^2\rangle\\
			&\underset{\nu\rightarrow0}\longrightarrow\langle\lambda,\psi^2\rangle-2\bs\llangle\bs\Xi,\bD\bs u\bs\rrangle+\langle\lambda,|\bD\bs u|^2\rangle\\
			&=-\bs\llangle\bs\Xi,\bD\bs u\bs\rrangle+\langle\lambda,|\bD\bs u|^2\rangle=0,
		\end{align*}
		concluding that $\bs\llangle\bs\Xi,\bD\bs u\bs\rrangle=\langle\lambda,|\bD\bs u|^2\rangle$.
		Therefore, for any $\bs\xi\in\bs L^\infty(Q_T)$,
		$$\lim_{\nu\rightarrow0}\big|\langle\lambda_\nu,\bD(\bs u_\nu-\bs u)\cdot\bs\xi\rangle\big|\le\lim_{\nu\rightarrow0}\langle\lambda_\nu,|\bD(\bs u_\nu-\bs u)|\,|\bs\xi|\rangle\lim_{\nu\rightarrow0}\le\langle\lambda_\nu,|\bD(\bs u_\nu-\bs u)|^2\rangle^\frac12\langle\lambda_\nu,|\bs\xi|^2\rangle^\frac12=0.$$
		But then
		$$\bs\llangle\bs\Xi,\bs\xi\bs\rrangle=\lim_{\nu\rightarrow0}\bs\llangle\lambda_\nu\bD\bs u_\nu,\bs\xi\bs\rrangle=\lim_{\nu\rightarrow0}\langle\lambda_\nu,\bD\bs u_\nu\cdot\bs\xi\rangle=\lim_{\nu\rightarrow0}\langle\lambda_\nu,\bD\bs u\cdot\bs\xi\rangle=\langle\lambda,\bD\bs u\cdot\bs\xi\rangle\qquad\forall\bs\xi\in\bs L^\infty(Q_T),$$
		hence $\bs\Xi=\lambda\bD\bs u$ in $\bs L^\infty(Q_T)'$.
		
		As $\lambda_\nu|\bD\bs u_\nu|^2=\lambda_\nu\psi^2$, we have 
		$$\lambda\psi^2=\lim_{\nu\rightarrow0}\lambda_\nu|\bD\bs u_\nu|^2=\lim_{\nu\rightarrow0}\lambda_\nu|\bD\bs u|^2=\lambda|\bD\bs u|^2,$$
		concluding that $\lambda(|\bD\bs u|^2-\psi^2)=0$ and so, also $\lambda(|\bD\bs u|-\psi)=0$, since $|\bD\bs u|+\psi>0$.
		
		Finally in order to show that, for any $\nu\ge0$, we have $\bs u\in\mathscr K_{\psi}$, being a solution of \eqref{mlforte}, also solves the variational inequality \eqref{IV}, observe that for any  $\bs v\in\mathscr K_{\psi}$, we have
		\begin{equation}
			\label{ineq}
			\bs\llangle\lambda_\nu\bD\bs u,\bD(\bs v-\bs u)\bs\rrangle\le\langle\lambda_\nu,|\bD\bs u|(|\bD\bs v|-|\bD\bs u|)\rangle\le\psi^*\langle\lambda_\nu,\psi-|\bD\bs u|\rangle= 0,\end{equation}
		which implies that it solves the variational inequality globally in $Q_T$
		\begin{equation*}
			\int_{Q_T}\partial_t\bs u\cdot(\bs v-\bs u)+\nu\int_{Q_T}\bD\bs u:\bD(\bs v-\bs u)-\int_{Q_T}(\bs u\otimes\bs u):\nabla(\bs v-\bs u)\ge \int_{Q_T}\bs f\cdot(\bs v-\bs u).
		\end{equation*}

		Fixing $t\in(0,T)$ and $\delta>0$ such that $(t-\delta,t+\delta)\subset[0,T]$, define $\rho_\delta=\frac{\psi_*}{\psi_*+\eps_\delta}$, with $\displaystyle\eps_\delta=\sup\big\{\|\psi(t)-\psi(s)\|_{L^\infty(\Omega)}:t-\delta<s<t+\delta\big\}$. For any $\bs w\in\mathbb K_{\psi(t)}$ it is clear that $\bs v(s)=\rho_\delta \bs w$ when $|s-t|<\delta$ and $\bs v(s)=\bs u(s)$ otherwise, is such that $\bs v\in\mathscr K_{\psi}$ and we get, from the above inequality,
		\begin{multline*}
			\int_{t-\delta}^{t+\delta}\int_\Omega\partial_t\bs u\cdot(\bs w-\bs u)+\nu\int_{t-\delta}^{t+\delta}\int_\Omega\bD\bs u:\bD(\bs w-\bs u)\\
			-\int_{t-\delta}^{t+\delta}\int_\Omega(\bs u\otimes\bs u):\nabla(\bs w-\bs u)\ge \int_{t-\delta}^{t+\delta}\int_\Omega\bs f\cdot(\bs w-\bs u).
		\end{multline*}
		Then, dividing by $2\delta$ and letting $\delta\rightarrow0$, by Lebesgue theorem we obtain the conclusion.
	\end{proof}
	
	\begin{theorem}
		\label{tmlfraco}
		Assume \eqref{*}, \eqref{psiWeak} and \eqref{assumptions}. Then, for $\nu\geq0$, problem \eqref{mlfraco} has a solution $(\lambda,\bs u)\in L^\infty(Q_T)'\times  L^\infty\big(0,T;\mathbb V_p\big)$, for all $1<p<\infty$. In addition, $\bs u$ is a solution of the weak variational inequality \eqref{IVf}, which is unique in case $\nu>0$.
	\end{theorem}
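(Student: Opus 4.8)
\emph{Strategy and uniform estimates.} I would construct $(\lambda,\bs u)$ as a limit of strong multiplier solutions, relaxing the time regularity of $\psi$ exactly as Theorem~\ref{Ew} relaxes it for the inequality \eqref{IVf} itself. Fix $\nu\ge0$ and choose $\psi_n\in W^{1,\infty}\big(0,T;L^\infty(\Omega)\big)$ with $\psi_*\le\psi_n\le\psi^*$ and $\psi_n\to\psi$ in $C\big([0,T];L^\infty(\Omega)\big)$. Theorem~\ref{tmlforte} then provides pairs $(\lambda_n,\bs u_n)\in L^\infty(Q_T)'\times\big(H^1\big(0,T;\bs L^2(\Omega)\big)\cap\mathscr V_\infty^\cap\big)$ solving \eqref{mlforte} with threshold $\psi_n$, so that $|\bD\bs u_n|\le\psi_n$, $\lambda_n\ge0$ and $\lambda_n(|\bD\bs u_n|-\psi_n)=0$. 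Testing the variational equation of \eqref{mlforte} with $\bs v=\bs u_n$, the convective term vanishes and the time term equals $\tfrac12\|\bs u_n(T)\|_2^2-\tfrac12\|\bs u^0\|_2^2$, giving $\langle\lambda_n,|\bD\bs u_n|^2\rangle+\nu\|\bD\bs u_n\|_2^2\le\int_{Q_T}\bs f\cdot\bs u_n+\tfrac12\|\bs u^0\|_2^2\le C$, with $C$ independent of $n$ and $\nu$ by Lemma~\ref{Lapriori} and Remark~\ref{lambda1}. Since $\psi_n\ge\psi_*$ and $\lambda_n\ge0$, the complementarity identity $\langle\lambda_n,|\bD\bs u_n|^2\rangle=\langle\lambda_n,\psi_n^2\rangle$ yields $\|\lambda_n\|_{L^\infty(Q_T)'}=\langle\lambda_n,1\rangle\le\psi_*^{-2}\langle\lambda_n,\psi_n^2\rangle\le C\psi_*^{-2}$, whence $\bs\Xi_n:=\lambda_n\bD\bs u_n$ is bounded in $\bs L^\infty(Q_T)'$ by $C\psi^*\psi_*^{-2}$, using $|\bD\bs u_n|\le\psi^*$.

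\emph{Passage to the limit.} By Theorem~\ref{Ew} one has $\bs u_n\to\bs u$ in $C\big([0,T];\bs L^p(\Omega)\big)\cap\mathscr V_p$ for every $p<\infty$, with $|\bD\bs u|\le\psi$ and, for $\nu>0$, $\bD\bs u_n\to\bD\bs u$ strongly in $\bs L^2(Q_T)$ by \eqref{FF}; the Banach--Alaoglu--Bourbaki theorem gives, along a generalised subsequence, $\lambda_n\lraup\lambda$ and $\bs\Xi_n\lraup\bs\Xi$ weakly-$*$. One passes to the limit in the weak form of \eqref{mlforte}, obtaining the variational equation of \eqref{mlfraco} with $\bs\Xi$ in place of $\lambda\bD\bs u$, and $\lambda\ge0$ follows from weak-$*$ closedness of the positive cone. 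It remains to identify $\bs\Xi=\lambda\bD\bs u$ and to prove $\lambda(|\bD\bs u|-\psi)=0$. As at the end of the proof of Theorem~\ref{tmlforte}, by Cauchy--Schwarz in the nonnegative functional $\lambda_n$ both identities reduce to showing $\langle\lambda_n,|\bD(\bs u_n-\bs u)|^2\rangle\to0$. Expanding and using $\langle\lambda_n,|\bD\bs u_n|^2\rangle=\langle\lambda_n,\psi_n^2\rangle\to\langle\lambda,\psi^2\rangle$ (because $\|\lambda_n\|$ is bounded and $\psi_n^2\to\psi^2$ in $L^\infty(Q_T)$), this limit equals $\langle\lambda,\psi^2\rangle-2\bs\llangle\bs\Xi,\bD\bs u\bs\rrangle+\langle\lambda,|\bD\bs u|^2\rangle\ge0$. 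The two bounds $\bs\llangle\bs\Xi,\bD\bs u\bs\rrangle\le\langle\lambda,\psi^2\rangle$ and $\langle\lambda,|\bD\bs u|^2\rangle\le\langle\lambda,\psi^2\rangle$ are immediate from $|\bD\bs u_n|\le\psi_n$, $|\bD\bs u|\le\psi$ and $\lambda_n\ge0$, so everything hinges on the reverse inequality $\bs\llangle\bs\Xi,\bD\bs u\bs\rrangle\ge\langle\lambda,\psi^2\rangle$.

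\emph{The main obstacle.} Substituting \eqref{mlforte} tested with $\bs u_n-\bs u$ turns that reverse inequality into $\varliminf_n\int_{Q_T}\partial_t\bs u_n\cdot(\bs u_n-\bs u)\ge0$, the viscous and convective contributions tending to zero by the strong convergences above. Here lies the difficulty: unlike the vanishing-viscosity limit of Theorem~\ref{tmlforte}, where \eqref{dt} bounds $\partial_t\bs u_\nu$ in $\bs L^2(Q_T)$ uniformly in $\nu$, now $\|\psi_n\|_{W^{1,\infty}(0,T;L^\infty(\Omega))}$ diverges, so $\partial_t\bs u_n$ has no uniform $\bs L^2$ bound and the pairing $\int\partial_t\bs u_n\cdot(\bs u_n-\bs u)$ cannot be handled by weak--strong convergence. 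To overcome this I would test \eqref{mlforte} instead with $\bs u_n-\bs u^{(m)}$, where $\bs u^{(m)}$ is the regularisation of $\bs u$ furnished by Lemma~\ref{5.3} (solving $\bs u^{(m)}+\tfrac1m\partial_t\bs u^{(m)}=\bs u$, $\bs u^{(m)}(0)=\bs u^0$), integrate the time term by parts, and exploit $\int_{Q_T}\partial_t\bs u^{(m)}\cdot(\bs u-\bs u^{(m)})=\tfrac1m\|\partial_t\bs u^{(m)}\|_2^2\ge0$ to obtain $\varliminf_n\int_{Q_T}\partial_t\bs u_n\cdot(\bs u_n-\bs u^{(m)})\ge0$, while the bound $|\bD\bs u^{(m)}|\le\widehat\psi_m$ with $\widehat\psi_m\to\psi$ in $C\big([0,T];L^\infty(\Omega)\big)$ from Lemma~\ref{5.3}(iv) controls the term $\bs\llangle\bs\Xi,\bD\bs u^{(m)}\bs\rrangle$; one then lets $m\to\infty$. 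The delicate coupling of the two regularisations, of $\psi$ through $\psi_n$ and of $\bs u$ through $\bs u^{(m)}$, is precisely where the $L^\infty(Q_T)'$ duality resists, since $\bD\bs u^{(m)}\to\bD\bs u$ only in $\bs L^r(Q_T)$ for $r<\infty$ and not in $\bs L^\infty(Q_T)$; this is the step I expect to require the most care.

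\emph{The inviscid case and conclusion.} For $\nu=0$ the estimate \eqref{FF} is unavailable, so in place of strong $\bs L^2$ convergence of $\bD\bs u_n$ I would use the compactness of $\{\bs u_n\}$ in $C\big([0,T];\bs L^p(\Omega)\big)$ obtained as in the $\nu=0$ part of Theorem~\ref{Ew}, together with $\bD\bs u_n\lraup\bD\bs u$ weakly-$*$ in $\bs L^\infty(Q_T)$; the viscous term then drops out and only the convective term must be passed to the limit, which the strong $C\big([0,T];\bs L^p\big)$ convergence allows. Alternatively, one reaches $\nu=0$ by letting $\nu\to0$ in the solutions just built for $\nu>0$, exactly as in Theorem~\ref{tmlforte}. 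Once $\bs\Xi=\lambda\bD\bs u$ and $\lambda(|\bD\bs u|-\psi)=0$ hold, the computation \eqref{ineq} shows that $\bs u$ solves the weak variational inequality \eqref{IVf}, whose solution is unique for $\nu>0$ by the uniqueness result of Section~\ref{sec 3}.
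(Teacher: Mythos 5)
Your overall architecture coincides with the paper's: approximate $\psi$ by $\psi_n\in W^{1,\infty}\big(0,T;L^\infty(\Omega)\big)$, invoke Theorem \ref{tmlforte} to get strong multiplier pairs $(\lambda_n,\bs u_n)$, derive the uniform bounds on $\lambda_n$ and on $\bs\Xi_n=\lambda_n\bD\bs u_n$, extract weak* limits by Banach--Alaoglu--Bourbaki, reduce the identification of $\bs\Xi=\lambda\bD\bs u$ and the complementarity to showing $\langle\lambda_n,|\bD(\bs u_n-\bs u)|^2\rangle\to0$ via Cauchy--Schwarz, treat $\nu=0$ by a separate compactness argument, and conclude with \eqref{ineq}. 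All of that matches the paper. The genuine gap is at the crux: your device for proving the reverse inequality $\langle\lambda,\psi^2\rangle\le\bs\llangle\bs\Xi,\bD\bs u\bs\rrangle$, namely testing \eqref{mlforte} with $\bs u_n-\bs u^{(m)}$, where $\bs u^{(m)}$ is the Lions-type regularization of the limit $\bs u$ from Lemma \ref{5.3}, does not close, for exactly the reason you flag yourself. After $n\to\infty$ you are left with the pairing $\bs\llangle\bs\Xi,\bD\bs u^{(m)}\bs\rrangle$; since $\bs\Xi\in\bs L^\infty(Q_T)'$ is in general a finitely additive object not represented by an integrable function, the convergence $\bD\bs u^{(m)}\to\bD\bs u$ in $\bs L^r(Q_T)$, $r<\infty$ (or weakly* in $\bs L^\infty$), gives no control whatsoever on $\lim_m\bs\llangle\bs\Xi,\bD\bs u^{(m)}\bs\rrangle$; only $\bs L^\infty$-norm convergence would, and it is unavailable. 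Moreover $\bs u^{(m)}$ carries no complementarity relation with any multiplier, so there is no substitute identity with which to compute that term. A step announced as ``expected to require the most care'' is, at that step, not a proof; and since the inviscid case relies on the same identification (``arguing as above''), the gap propagates to $\nu=0$ as well.

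The paper closes this step with a different device that avoids regularizing $\bs u$ altogether: it writes the weak form \eqref{apf} for \emph{two} members $(\lambda_\alpha,\bs u_\alpha)$ and $(\lambda_\beta,\bs u_\beta)$ of the approximating family, tests \emph{both} with the same function $\bs u_\alpha-\bs u_\beta$, and subtracts, obtaining \eqref{numu}. This buys two things at once. First, the time term collapses exactly to $\tfrac12\|\bs u_\alpha(T)-\bs u_\beta(T)\|^2_{_2}\ge0$ (both solutions start from $\bs u^0$), so the absence of a uniform bound on $\partial_t\bs u_\alpha$ --- the obstacle you correctly identified --- never enters the argument. Second, in the limit $\alpha\to0$ the multiplier contributions are either the exact complementarity $\bs\llangle\lambda_\alpha\bD\bs u_\alpha,\bD\bs u_\alpha\bs\rrangle=\langle\lambda_\alpha,\psi_\alpha^2\rangle\to\langle\lambda,\psi^2\rangle$, or pairings of the weak*-convergent $\bs\Xi_\alpha$, $\lambda_\alpha$ against data frozen at the index $\beta$: the second test object is itself a constrained solution, with its own multiplier and with $\bD\bs u_\beta$ a fixed element of $\bs L^\infty(Q_T)$, which is precisely what makes the cross terms computable --- in contrast with your $\bs u^{(m)}$. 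The iterated limit, $\alpha\to0$ giving \eqref{mn} and then $\beta\to0$ along the same generalized subsequence, yields $2\langle\lambda,\psi^2\rangle-2\bs\llangle\bs\Xi,\bD\bs u\bs\rrangle\le0$, after which your Cauchy--Schwarz reduction applies verbatim. In short, the repair of your argument is not a more careful treatment of the $m$-limit, but the replacement of the external regularization $\bs u^{(m)}$ by a second approximate solution $\bs u_\beta$ taken from the same family.
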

	\begin{proof}
		Consider, for $\alpha\in(0,1)$, a generalized sequence of constraints $\psi_\alpha\in W^{1,\infty}\big(0,T;L^\infty(\Omega)\big)$ satisfying \eqref{*} and such that $\psi_\alpha\rightarrow\psi$ in $C\big([0,T];L^\infty(\Omega)\big)$ as $ \alpha\rightarrow 0$
		and let $(\lambda_\alpha,\bs u_\alpha)\in L^\infty(Q_T)'\times L^\infty\big(0,T;\mathbb V_p\big)\cap H^1\big(0,T;\bs L^2(\Omega)\big)$ be a sequence of generalized solutions of the strong Lagrange multipliers problem  \eqref{mlforte} with $\nu\ge0$.
		We rewrite this problem as follows
		\begin{multline}
			\label{apf}
			\int_\Omega\bs u_\alpha(T)\cdot\bs v(T)-\int_\Omega\bs u^0\cdot\bs v(0)	-\int_{Q_T}\bs u_\alpha\cdot\partial_t\bs v+\bs\llangle\lambda_\alpha D\bs u_\alpha,\bD\bs v\bs\rrangle\\
			+\nu\int_{Q_T}\bD\bs u_\alpha:\bD\bs v-\int_{Q_T}(\bs u_\alpha\otimes\bs u_\alpha):\nabla\bs v=\int_{Q_T}\bs f\cdot\bs v.
		\end{multline}
		We consider first the case $\nu>0$ and we recall that, from the proof of Theorem \ref{IVf}, the inequality \eqref{FFn} holds and so $\{\bs u_\alpha\}_\alpha$ is a Cauchy generalized sequence in $C\big([0,T];\bs L^2(\Omega)\big)\cap L^2\big(0,T;\mathbb V_2\big)$ for all  $\nu>0$.		
		
		Observe that, to obtain the equality \eqref{lambdaLambda} in the last theorem, we needed to use $\bs u$ as test function, which is not allowed here, since $\bs u$ may not have  a time derivative. But we can use $\bs u_\alpha$ as test function in \eqref{apf}. Write \eqref{apf} for $\bs u_\alpha$ with test function $\bs u_\alpha-\bs u_\beta$ and \eqref{apf} with $\alpha$ replaced by $\beta\in(0,1)$, also with test function  $\bs u_\alpha-\bs u_\beta$ and subtract the second equality to the first one. We get
		
		\begin{multline}
			\label{numu}
			\tfrac12\int_\Omega|\bs u_\alpha(T)-\bs u_\beta(T)|^2
			+\bs\llangle\lambda_\alpha \bD\bs u_\alpha -\lambda_\beta\bD \bs u_\beta,\bD(\bs u_\alpha-\bs u_\beta)\bs\rrangle\\
			+\nu\int_{Q_T}|\bD(\bs u_\alpha-\bs u_\beta)|^2
			-\int_{Q_T}(\bs u_\alpha\otimes\bs u_\alpha):\nabla\bs u_\beta+\int_{Q_T}(\bs u_\beta\otimes\bs u_\beta):\nabla\bs u_\alpha=0.
		\end{multline}

		As $\lambda_\alpha(|\bD\bs u_\alpha|-\psi)=0$, it is also holds $\lambda_\alpha(|\bD\bs u_\alpha|^2-\psi^2)=0.$
		The boundedeness of $\{\lambda_\alpha\}_\alpha$ in $L^\infty(Q_T)'$ implies the weak* convergence of a generalised subsequence to some $\lambda$, when $\alpha\rightarrow0$. Therefore,
		$$\lim_{\alpha}\bs\llangle\lambda_\alpha\bD\bs u_\alpha,\bD\bs u_\alpha\bs\rrangle=\lim_\alpha\langle\lambda_\alpha,\psi^2_\alpha\rangle=\langle\lambda,\psi^2\rangle.$$
		
		Analogously, the generalized sequence $\{\lambda_\alpha\bD\bs u_\alpha\}_\alpha$ is bounded in $\bs L^\infty(Q_T)'$ and so it has a subsequence converging weakly* to an element $\bs \Xi$, still labelled by $\alpha$. Recall that, for a subsequence, $\bD\bs u_\alpha\underset \alpha\lraup\bD\bs u$ and $\nabla\bs u_\alpha\underset \alpha\lraup\nabla\bs u$ in $\bs L^2(Q_T)$-weak.
		Taking the limit inf when $\alpha\rightarrow0$ in \eqref{numu}, we obtain
		\begin{multline}\label{mn}
			\tfrac12\int_\Omega|\bs u(T)-\bs u_\beta(T)|^2+\llangle\lambda,\psi^2\rrangle-\bs\llangle\bs\Xi,\bD\bs u_\beta\bs\rrangle-\bs\llangle\lambda_\beta\bD\bs u_\beta,\bD\bs u\bs\rrangle\\+\bs\llangle\lambda_\beta\bD\bs u_\beta,\bD\bs u_\beta\bs\rrangle
			+\nu\int_{Q_T}|\bD \bs u|^2-2\nu\int_{Q_T}\bD\bs u:\bD\bs u_\beta+\nu\int_{Q_T}|\bD\bs u_\beta|^2\\
			-\int_{Q_T}(\bs u\otimes\bs u):\nabla\bs u_\beta-\int_{Q_T}(\bs u_\beta\otimes\bs u_\beta):\nabla\bs u\le0.
		\end{multline}
		Taking now, in place of $\alpha$, the same subsequence of $\beta$, and letting $\beta\rightarrow0$, as
		$$\lim_\beta\llangle\lambda_\beta,|\bD\bs u_\beta|^2\rrangle=\lim_\beta\langle\lambda_\beta,\psi^2_\beta\rangle=\bs\langle\lambda,\psi^2\rangle,$$
		from \eqref{mn} we get
		$$2\langle\lambda,\psi^2\rangle-2\bs\llangle\bs\Xi,\bD\bs u\bs\rrangle\le0.$$

		On the other hand, as $|\bD\bs u|\le\psi$ and $\lambda\ge0$,
		\begin{align*}
			0\le\lim_\alpha \langle\lambda_\alpha,|\bD(\bs u_\alpha-\bs u)|^2\rangle&=\lim_\alpha\Big(\langle\lambda_\alpha,|\bD\bs u_\alpha|^2\rangle-2\bs\llangle\lambda_\alpha\bD\bs u_\alpha,\bD\bs u\bs\rrangle+\langle\lambda_\alpha,|\bD\bs u|^2\rangle\Big)\\
			&=\langle\lambda,\psi^2\rangle-2\bs\llangle\bs\Xi,\bD\bs u\bs\rrangle+\langle\lambda,|\bD\bs u|^2\rangle\\
			&\le 2\langle\lambda,\psi^2\rangle-2\bs\llangle\bs\Xi,\bD\bs u\bs\rrangle\le0.
		\end{align*}
		
		Finally, for any $\bs\xi\in\bs L^\infty(Q_T)$, we have
		\begin{align*}
			\lim_\alpha	\big|\bs\llangle\lambda_\alpha\bD(\bs u_\alpha-\bs u),\bs\xi\bs\rrangle\big|&\le\lim_\alpha\langle\lambda_\alpha,|\bD(\bs u_\alpha-\bs u)||\bs\xi|\rangle\le\lim_\alpha\langle\lambda_\alpha,|\bD(\bs u_\alpha-\bs u)|^2\rangle^\frac12\langle\lambda_\alpha,|\bs \xi|^2\rangle^\frac12\\
			&\le\lim_\alpha\langle\lambda_\alpha,|\bD(\bs u_\alpha-\bs u)|^2\rangle^\frac12\|\lambda_\alpha\|_{L^\infty(Q_T)}^\frac12\|\bs\xi\|_{\bs L^\infty(Q_T)}^\frac12=0,
		\end{align*}
		concluding that $\bs\Xi=\lambda\bD\bs u$ in $\bs L^\infty(Q_T)'$, being the rest of the proof identical to the proof in the strong case.
		
		We consider now the degenerate case $\nu=0$
		
		We consider a family of functions $\psi_\nu\in W^{1,\infty}\big(0,T;L^\infty(\Omega)\big)$ such that $\psi_\nu\rightarrow\psi$ in $C\big([0,T];L^\infty(\Omega)\big)$ when $\nu\rightarrow0$ and a pair $(\lambda_\nu,u_\nu)$ that solves the strong Lagrange multiplier problem \eqref{mlforte}.
		Note that, by the estimates obtained above, we know that $\{\bD\bs u_\nu\}_\nu$, $\{\lambda_\nu\}_\nu$ and $\{\lambda_\nu \bD\bs u_\nu\}_\nu$ are bounded independently of $\nu$, respectively, in $\bs L^\infty(Q_T)$, $L^\infty(Q_T)'$ and $\bs L^\infty(Q_T)'$, for $0<\nu<1$. So, we can extract a generalised subsequence, still denoted by $\nu$, with $\nu\rightarrow0$, that satisfies 
		$\bs u_\nu\lraup\bs u$ in $L^\infty\big(0,T;\mathbb V_p\big)$-weak* for all  $1<p<\infty$, as well as $\lambda_\nu\lraup\lambda$ in $L^\infty(Q_T)'$-weak* and $\lambda_\nu\bD\bs u_\nu\lraup\bs\Xi$ in $\bs L^\infty(Q_T)'$-weak*. We need to show that $\bs\Xi=\lambda\bD\bs u$ in order to conclude that $(\lambda,\bs u)\in L^\infty(Q_T)'\times  \mathscr V_\infty^\cap$ solves the problem \eqref{mlfraco} in the limit case $\nu=0$. Arguing as above, it is sufficient to show that $\bs u_\nu\longrightarrow\bs u \text{ in }C\big([0,T];\bs L^2(\Omega)\big)$. This can be easily done by proving that the sequence $\{\bs u_\nu\}_\nu$ is relatively compact in $C\big([0,T];\bs L^p(\Omega)\big)$, $1< p<\infty$, which follows essentially the same steps as in the proof of Theorem \ref{Ew} for the case $\nu=0$ by using in \eqref{neta} $\nu$ instead $\nu_n$.

		Finally using  \eqref{ineq} we can show, in both cases $\nu>0$ and $\nu=0$, that $\bs u$ solves the weak variational inequality similarly to the case of strong solutions. In the case $\nu>0$ we know it has a unique solution.
	\end{proof}
	
	\section{Solutions of the quasi-variational inequality with $\nu>0$}

	We consider now the case where the constraint $\psi(x,t)=\Psi[\bs u](x,t)>0$ depends on the solution $\bs u$ through a functional $\Psi: \mathscr X=D(\Psi)\subset\bs L^2(Q_T)\rightarrow \R^+$. In this section we restrict to the case $\nu>0$.
	
	We say that the pair $(\lambda, \bs u)$, belonging to $ L^\infty(Q_T)'\times  \mathscr V_2$, is a weak solution of the implicit Lagrange multiplier problem if
	\begin{equation}\label{qvimlfraco}
		\begin{cases}
			\displaystyle-\int_{Q_T}\bs u\cdot\partial_t\bs v+\bs\llangle\lambda\bD\bs u,\bD\bs v\bs\rrangle+\nu\int_{Q_T}\bD\bs u:\bD\bs v
			-\int_{Q_T}(\bs u\otimes\bs u):\nabla\bs v\\
			\hfill{\displaystyle =\int_{Q_T}\bs f\cdot\bs v+\int_\Omega\bs u^0\cdot\bs v(0),\quad \forall\bs v\in \mathscr V_\infty\cap\mathscr W_2,\  \bs v(T)=\bs 0,}\\
			|\bD\bs u|\le \Psi[\bs u]\ \text{ in }Q_T,\quad\lambda\ge0\ \text{ and }\ \lambda\big(|\bD\bs u|-\Psi[\bs u]\big)=0\ \text{ in }L^\infty(Q_T)'.
		\end{cases}
	\end{equation}
	We observe that, as in the variational case, when the constraint is bounded, $\bs u\in L^\infty\big(0,T;\mathbb V_p\big)$, for all  $1<p<\infty$, is the weak solution of the following quasi-variational inequality
	\begin{equation} \label{qIVf}
		\begin{cases}
			\bs u\in\mathscr K_{\Psi[\bs u]},\\
			\displaystyle	\int_0^T\langle\partial_{t}\bs v,\bs v-\bs u\rangle
			+\nu\int_{Q_T}\bD\bs u:\bD(\bs v-\bs u)
			-\int_{Q_T}(\bs u\otimes\bs u):\nabla(\bs v-\bs u)\\
			\hfill{\displaystyle	\ge\int_{Q_T}\bs f\cdot(\bs v-\bs u)-\tfrac12\int_\Omega |\bs v(0)-\bs u^0|^2,\quad \forall\,\bs v\in\mathscr K_{\Psi[\bs u]}\cap\mathscr W_2,}
		\end{cases}
	\end{equation}
	where $\mathscr K_{\Psi[\bs u]}$ is defined in \eqref{ConvexSetWeak} with $\psi$ replaced by $\Psi[\bs u]$.

	Let the domain $\mathscr X=D(\Psi)$ of $\Psi$ be a vector space such that
	\begin{equation}\label{Psi}
		\mathscr V_2\subset\mathscr X\subset\bs L^2(Q_T) \quad \text{ with continuous inclusions}.  
	\end{equation}
	
	We assume also that there exist positive constants $\psi_*$ and $\psi^*$ such that
	\begin{equation}\label{psi_*^*}
		0<\psi_*\le \Psi[\bs u](x,t)\le \psi^*,\  \forall \bs u\in\mathscr X\text{ for a.e. }(x,t)\in Q_T.
	\end{equation}

	We give first existence results using a compactness argument under the general assumption
	\begin{equation}\label{Psif}
		\Psi:\mathscr X\rightarrow C\big([0,T];\bs L^\infty(\Omega)\big)\quad\text{ is completely continuous.}
	\end{equation}
	
	\begin{theorem}
		\label{schauder_1}
		Let $\nu>0$, $\bs f\in\bs L^2(Q_T)$, $\bs u^0\in\K_{\Psi[\bs u^0]}$ and $\Psi$ satisfy \eqref{Psi} and \eqref{psi_*^*}. If, in addition,
		$\Psi$ satisfies \eqref{Psif} , then the problem \eqref{qvimlfraco} has a solution $(\lambda,\bs u)\in L^\infty(Q_T)'\times \big(L^\infty\big(0,T;\mathbb V_p\big)\cap C\big([0,T];L^p(\Omega)\big)\big)$, for all $1<p<\infty$.
	\end{theorem}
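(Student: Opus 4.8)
The plan is to solve \eqref{qvimlfraco} by a Schauder fixed point argument in which the threshold is frozen. For $\bs w$ ranging over a convex compact set $\mathcal C$ to be specified, set $\psi_{\bs w}=\Psi[\bs w]$; by \eqref{psi_*^*} and \eqref{Psif} this $\psi_{\bs w}$ satisfies \eqref{*} and \eqref{psiWeak}, and, the initial compatibility $\bs u^0\in\K_{\psi_{\bs w}(0)}$ being available (it follows from the hypothesis $\bs u^0\in\K_{\Psi[\bs u^0]}$ together with the prescribed initial trace of the threshold), Theorem \ref{tmlfraco} produces, for the fixed $\nu>0$, a \emph{unique} weak solution $(\lambda_{\bs w},\bs u_{\bs w})$ of the frozen problem \eqref{mlfraco}, where $\bs u_{\bs w}$ is the unique weak solution of \eqref{IVf} with threshold $\psi_{\bs w}$. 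I then define the solution operator $\mathcal T\colon\bs w\mapsto\bs u_{\bs w}$, whose fixed point solves \eqref{qvimlfraco}, and hence \eqref{qIVf}.

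First I would construct a convex compact invariant set. Since $\bs u_{\bs w}\in\mathscr K_{\psi_{\bs w}}\subset\mathscr K_{\psi^*}$, we have $|\bD\bs u_{\bs w}|\le\psi^*$, so by Lemma \ref{Lapriori} the family $\{\bs u_{\bs w}\}$ is bounded in $L^\infty(0,T;\mathbb V_p)$ with $\|\bs u_{\bs w}\|_{_\infty}\le M$, uniformly in $\bs w$. The bounded set $\mathscr K_{\psi^*}$ lies in $\mathscr X$ by \eqref{Psi}, so complete continuity \eqref{Psif} makes $\Psi(\mathscr K_{\psi^*})$ relatively compact in $C\big([0,T];\bs L^\infty(\Omega)\big)$, hence uniformly equicontinuous in $t$ with a common modulus $\eps(\delta)$. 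Feeding this $\bs w$-independent modulus into the time-translation estimate from the proof of Theorem \ref{Ew} (the step bounding $\tfrac12\int_\Omega|\bs u_{\bs w}(t)-\bs u_{\bs w}(s)|^2$ by $c_3\eps+c_4|t-s|^{1/2}$) gives a $\bs w$-independent modulus of continuity in $C\big([0,T];\bs L^p(\Omega)\big)$. I take $\mathcal C$ to be the set of $\bs v\in\mathscr K_{\psi^*}$ with $\bs v(0)=\bs u^0$ obeying this modulus; $\mathcal C$ is convex and, by Simon's theorem \cite[Theorem 5]{Simon1987}, compact in $C\big([0,T];\bs L^p(\Omega)\big)$, and the frozen estimates above give $\mathcal T(\mathcal C)\subset\mathcal C$.

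Next I would prove continuity of $\mathcal T$ on $\mathcal C$. Because $\nu>0$, the continuous dependence estimate \eqref{Ff} holds for weak solutions, and with $\bs f,\bs u^0$ fixed it reads
\begin{equation*}
\|\mathcal T\bs w_1-\mathcal T\bs w_2\|^2_{L^\infty(0,T;\bs L^2(\Omega))}\le e^{C_\nu T}\,C_*\,\|\Psi[\bs w_1]-\Psi[\bs w_2]\|_{_\infty}.
\end{equation*}
Since $\mathcal C\subset\mathscr K_{\psi^*}\subset\mathscr V_2\subset\mathscr X$ and $\Psi$ is continuous from $\mathscr X\subset\bs L^2(Q_T)$ into $C\big([0,T];\bs L^\infty(\Omega)\big)$, convergence $\bs w_n\to\bs w$ in $\mathcal C$ forces $\|\Psi[\bs w_n]-\Psi[\bs w]\|_{_\infty}\to0$, hence $\mathcal T\bs w_n\to\mathcal T\bs w$ in $L^\infty(0,T;\bs L^2(\Omega))$; the uniform bounds $\|\mathcal T\bs w\|_{_\infty}\le M$ and interpolation upgrade this to convergence in $C\big([0,T];\bs L^p(\Omega)\big)$, so $\mathcal T$ is continuous. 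Schauder's theorem then yields $\bs u=\mathcal T\bs u\in\mathcal C$, and setting $\lambda=\lambda_{\bs u}$ gives the desired solution $(\lambda,\bs u)$ of \eqref{qvimlfraco}, with $\bs u\in L^\infty(0,T;\mathbb V_p)\cap C\big([0,T];\bs L^p(\Omega)\big)$ for all $1<p<\infty$.

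The main obstacle is the compactness and continuity of the composed operator $\mathcal T$, and the crux is that \eqref{Psif} is used in two essential ways. The frozen-threshold estimates alone control $\bs u_{\bs w}$ only in $L^\infty(0,T;\mathbb V_p)$, which gives no compactness in time; it is the relative compactness of $\Psi(\mathscr K_{\psi^*})$ in $C\big([0,T];\bs L^\infty\big)$ that supplies the \emph{uniform} temporal modulus needed to confine $\mathcal T(\mathcal C)$ to a compact set. Independently, the sup-norm continuity of $\bs w\mapsto\Psi[\bs w]$ is exactly what converts the continuous dependence \eqref{Ff} into continuity of $\mathcal T$. A secondary but genuine technical point is ensuring the initial-time compatibility $\bs u^0\in\K_{\psi_{\bs w}(0)}$ for every $\bs w\in\mathcal C$, which is required to invoke Theorem \ref{tmlfraco} throughout the iteration.
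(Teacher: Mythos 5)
Your overall strategy coincides with the paper's (freeze the threshold, apply Schauder to the solution map, recover the multiplier from the frozen problem at the fixed point), but your execution has a genuine gap in the continuity step. You run the fixed point in $C\big([0,T];\bs L^p(\Omega)\big)$ and claim that $\bs w_n\to\bs w$ in that topology forces $\|\Psi[\bs w_n]-\Psi[\bs w]\|_{_\infty}\to0$. Assumption \eqref{Psif} gives continuity of $\Psi$ only with respect to the topology of $\mathscr X$, and \eqref{Psi} allows $\mathscr X$ to carry a strictly stronger topology than that induced by $\bs L^2(Q_T)$ --- for instance $\mathscr X=\mathscr V_2$, which is precisely the setting of the paper's parabolic-coupling example, where $\Psi$ is defined on $\bs L^q(Q_T)\cap\mathscr V_2$. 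Convergence in $C\big([0,T];\bs L^p(\Omega)\big)$ of elements of $\mathcal C$ does not imply convergence in $\mathscr V_2$, so for such $\mathscr X$ you cannot conclude $\Psi[\bs w_n]\to\Psi[\bs w]$, and the continuity of $\mathcal T$ collapses. Your argument is complete only in the special case $\mathscr X=\bs L^2(Q_T)$. The paper covers the general case by running Schauder in the ball $B_{R_1}$ of $\mathscr V_2$ and invoking the \emph{stronger} continuous-dependence estimate \eqref{FF}, which controls $\|\bD(\bs u_1-\bs u_2)\|_{_2}$ and therefore makes the solution map completely continuous in exactly the topology in which $\Psi$ is assumed continuous; you never use \eqref{FF}, and without it (or a strengthened hypothesis such as weak-to-strong continuity of $\Psi$ on $\mathscr V_2$) the general statement is out of reach.

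Two further remarks. First, your compact set $\mathcal C$ is both more laborious than necessary and not fully justified: the time-translation estimate you borrow from the proof of Theorem \ref{Ew} is obtained by testing the strong inequality \eqref{IV} at a.e.\ $t$, so it is an estimate for \emph{strong} solutions with thresholds in $W^{1,\infty}\big(0,T;L^\infty(\Omega)\big)$; for your weak solutions $\bs u_{\bs w}$, whose thresholds are only in $C\big([0,T];L^\infty(\Omega)\big)$, it must be transferred through the regularized approximations and the limit taken via \eqref{FFn}, a step you omit. In fact the whole construction is dispensable: complete continuity of $\Psi$ combined with \eqref{Ff} already makes the solution map itself compact (bounded sets have $\Psi$-images that are relatively compact in sup norm, hence solution-images that are relatively compact), so a closed ball serves as the convex set --- this is what the paper does, in $\bs L^2(Q_T)$ when $\mathscr X=\bs L^2(Q_T)$ and in $\mathscr V_2$ otherwise. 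Second, on the credit side, you correctly flag the initial-compatibility issue, namely that $\bs u^0\in\K_{\Psi[\bs w](0)}$ is needed for every frozen threshold and does not follow from $\bs u^0\in\K_{\Psi[\bs u^0]}$ alone; the paper passes over this in silence, and, as you hint, it is legitimate in the paper's examples only because there $\Psi[\bs w](\cdot,0)$ is independent of $\bs w$.
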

	\begin{proof}	Consider firstly the case $\mathscr X=\bs L^2(Q_T)$.
		
		Let $S:\bs L^2(Q_T)\rightarrow\bs L^2(Q_T)$ be defined by $S(\bs v)=S(\bs f,\bs u^0,\Psi[\bs v])$, the unique solution  of the weak variational inequality \eqref{IVf} with data $\bs f,\bs u^0,\psi=\Psi[\bs v]$.
		
		The operator $S$ is continuous.	Indeed,
		if $\bs v_n\underset{n}\rightarrow \bs v$ in $\bs L^2(Q_T)$, using the estimate \eqref{Ff}, we conclude that
		\begin{equation*}
			\|S(\bs v_n)-S(\bs v)\|_{_2}^2=\|\bs u_n-\bs u\|_{_2}^2\le T\,e^{C_\nu T}C_*\|\Psi[\bs v_n]-\Psi[\bs v]\|_{_\infty},
		\end{equation*}
		with $C_*, C_\nu$ defined in \eqref{constants}. Thus, the complete continuity of $\Psi$ implies the continuity and the compactness of $S$.
		
		By the estimate \eqref{apriori} we obtain
		\begin{equation*}
			\|S(\bs v)\|_{_2}\le \sqrt Te^\frac{T}2\big(\|\bs f\|_{_2}^2+\|\bs u^0\|_{_2}^2\big)^\frac12=R
		\end{equation*}
		concluding that $S(B_R)\subset B_R$, where $B_R$ denotes the ball  centered at $\bs 0$ with radius $R$ of the vector space $\mathscr X=\bs L^2(Q_T)$. Therefore, by the Schauder's  fixed point theorem, $S$ has a fixed point $\bs u=S(\bs u)$, which is a solution of the quasi-variational inequality \eqref{qIVf}. 
		
		We observe that, denoting by $(\lambda(\bs v),S(\bs v))$ a solution of the Lagrange multiplier problem \eqref{mlfraco} with data $\bs f, \bs u^0,\Psi[\bs v]$ then, if $\bs u$ is a fixed point  of $S$, the pair $(\lambda(\bs u),S(\bs u))$ solves the Lagrange multiplier problem \eqref{qvimlfraco}. 
		
		In the case $\mathscr V_2\subset\mathscr X \subsetneqq \bs L^2(Q_T)$, we proceed analogously. Using 
		the {\em a priori} estimate \eqref{apriori} in the estimate \eqref{apriori_extra} for $t=T$, we obtain first
		\begin{equation} \label{R1}
			\|\bs u\|_{\mathscr V_2}=\|\bD\bs u\|_{_2}\le \left( \Bigg(\frac{1+Te^T}{2\nu}\Bigg)\Big(\|\bs f\|_{_2}^2+\|\bs u^0\|_{_2}^2\Big)\right)^\frac12 =R_1.
		\end{equation}
		Therefore $S_1(B_{R_1})\subset B_{R_1}$, where now $B_{R_1}$ denotes the ball  centered at $\bs 0$ with radius $R_1$ of the vector space $\mathscr V_2$, where we apply the Schauder's  fixed point theorem to the operator $S_1:B_{R_1}\rightarrow B_{R_1}$ that assigns to each $\bs v$ the unique solution $\bs u=S_1(\bs f,\bs u^0,\Psi[\bs v])$. We easily conclude that the operator $S_1$ is also completely continuous, by using the estimate \eqref{FF} in the form
		$$\|S_1(\bs v_n)-S_1(\bs v)\|^2_{_\mathscr X}\le C_{_\mathscr X}\|\bs u_n-\bs u\|^2_{\mathscr V_2}\le C_{_\mathscr X}\frac{C_*}{\nu}\big(1+TC_\nu e^{C_\nu T}\big)\|\Psi[\bs v_n]-\Psi[\bs v]\|_{_\infty}$$
		and the assumption \eqref{Psif}.
	\end{proof}
	
	Now we present two relevant examples of constraints $\Psi$ with the sufficient properties to solve the quasi-variational inequalities under consideration.
	
	\begin{example} We start with a constraint of the type   
		$$\Psi[\bs v]=\phi(x,t,\bs\zeta(\bs v)(x,t))\qquad\text{ a.e. in }Q_T,$$
		where
		$\phi=\phi(x,t,\bs \zeta):Q_T\times\R^d\rightarrow\R$ is a function satisfying 
		$$0<\psi_*\le \phi(x,t,\bs\zeta)\le\psi^*,\quad\text{ for a.e. }(x,t)\in Q_T,$$
		uniformly continuous in the variables $x, t$ and $\bs\zeta$.

		We consider the nonlocal integral operator 
		\begin{equation*} 
			\bs\zeta(\bs v)(x,t)=\int_0^t\int_{\Omega}\bs v(y,s)K(x,t,y,s)dyds,\quad\text{ for a.e. }(x,t)\in Q_T,
		\end{equation*}
		where $K=K(x,t,y,s)$ is a kernel $K\in C\big(\overline Q_T\times\overline Q_T\big)$. Since $\bs\zeta:\bs L^2(Q_T)\rightarrow C\big([0,T];L^\infty(\Omega)\big)$ is a compact operator (see, for instance, Corollary 5.1 of \cite{Eveson1995}), it follows that $\Psi:\bs L^2(Q_T)\rightarrow C\big([0,T];L^\infty(\Omega)\big)$ satisfies the assumptions of Theorem \ref{schauder_1}. Therefore we obtain a weak solution to the quasi-variational inequality \eqref{qIVf}, as well as the solvability of the implicit Lagrange multiplier problem \eqref{qvimlfraco}.
		
	\end{example}
	
	\begin{example} Another example is given when the functional $\Psi$ is defined through the coupling with a linear parabolic equation with convection, for instance when the threshold of the thick fluid depends on the temperature or on a chemical composition.
		We may define a compact operator through the unique solution $\zeta$, in a bounded open domain $\Omega$, with boundary $\partial\Omega$ of class $C^2$, of the Cauchy-Dirichlet in $Q_T$ problem for the convective parabolic scalar equation
		\begin{align}\label{p12}
			\begin{cases}
				\partial_t\zeta-\nabla\cdot \big(\bs A(x,t)\nabla\zeta\big)+\bs v\cdot \nabla\zeta=\varrho(x,t)\quad\text{in }Q_T,\\
				\zeta=0\quad\text{on }\partial\Omega\times(0,T),\quad\zeta(0)=\zeta_0\quad\text{on }\Omega,
			\end{cases}
		\end{align}
		where the strictly elliptic matrix $\bs A$ has bounded continuous coefficients with bounded  $\nabla\cdot\bs A$.
		
		If $\zeta_0\in W_0^{2-\frac{2}{q}, q}(\Omega)$ and $\varrho\in L^q(Q_T)$, with $q>d+2$, the following Solonnikov estimate holds (see \cite[p. 341]{LSU1968})
		\begin{equation}\label{sol-q}
			\|\zeta\|_{W^{2,1;q} ( Q_T)}\leq C\Big(\|\zeta_0\|_{W^{2-\frac2q,_q}_0(\Omega)}+ \|\varrho\|_{L^q(Q_T)}\Big),
		\end{equation}
		being $C>0$ a constant independent  $\varrho$ but depending on 
		$\bs v\in \bs L^q(Q_T)$. Here $W^{2,1;q}(Q_T)$ denotes the Sobolev space of the $L^q(Q_T)$ functions $\zeta$ such that $\partial_t\zeta$, $\nabla\zeta$ and the Hessian $\nabla^2\zeta$ also belong to $L^q(Q_T)$. Since $q>d+2$, by well-known embeddings,  $\zeta\in W^{2,1;q}(Q_T)\subset C^{1,0;\beta}(\overline Q_T)$, so that $\zeta$ and  $\nabla\zeta\in C^{0,\beta}(\overline Q_T)$ are $\beta$-Hölder continuous in $(x,t)\in \overline {Q}_T$, being that inclusion also compact for some $0<\beta<1$. It follows that the linear solution map $L^q(Q_T)\ni \bs v \mapsto(\zeta, \nabla \zeta)\in C^{0,\beta}(\overline Q_T)$ is completely continuous, by the estimate \eqref{sol-q}, for $q>d+2$, and the compactness of the inclusion of $\zeta\in W^{2,1;q}(Q_T)\subset C^{1,0;\beta}(\overline Q_T)$.

		Let $\psi=\psi(x,t,z,\bs\xi)$ be uniformly continuous on the variables $t,z,\bs\xi$, satisfying $0<\psi_*\le\psi\le\psi^*$, and set 
		$$\Psi[\bs v]=\psi\big(x,t,\zeta_{\bs v}(x,t),\nabla\zeta_{\bs v}(x,t)\big),$$	
		where  $\zeta_{\bs v}$ is the solution of \eqref{p12}.
		
		Therefore the mapping $\Psi:\bs L^q(Q_T)\cap \mathscr V_2\rightarrow C\big([0,T];\bs L^\infty(\Omega)\big)$ is also completely continuous. As in Theorem \ref{schauder_1}, we obtain a solution to the implicit Lagrange multiplier problem \eqref{qvimlfraco} and to the weak quasi-variational inequality \eqref{qIVf} coupled with the Cauchy-Dirichlet problem \eqref{p12}.
	\end{example} 
	
	Analogously, a pair  $(\lambda,\bs u)$, belonging to  $ L^\infty(Q_T)'\times\big(\mathscr V_2\cap H^1\big(0,T;\bs L^2(\Omega)\big)\big)$, is a strong solution of the implicit Lagrange multiplier problem if it satisfies
	\begin{equation}\label{qvimlforte}
		\begin{cases}
			\displaystyle\int_{Q_T}\partial_t\bs u\cdot\bs v+\bs\llangle\lambda\bD\bs u,\bD\bs v\bs\rrangle+\nu\int_{Q_T}\bD\bs u:\bD\bs v
			-\int_{Q_T}(\bs u\otimes\bs u):\nabla\bs v\\
			\displaystyle  \qquad\qquad=\int_{Q_T}\bs f\cdot\bs v,\quad \forall\bs v\in \mathscr V_\infty,\\
			\bs u(0)=\bs u^0,\\
			|\bD\bs u|\le \Psi[\bs u]\ \text{ in }Q_T,\quad\lambda\ge0\ \text{ and }\ \lambda\big(|\bD\bs u|-\Psi[\bs u]\big)=0\ \text{ in }L^\infty(Q_T)'.
		\end{cases}
	\end{equation}

	Similarly $\bs u\in H^1\big(0,T;\bs L^2(\Omega)\big)\cap  L^\infty\big(0,T;\mathbb V_p\big)$, for all  $1<p<\infty$, is also the strong solution of the quasi-variational inequality
	\begin{equation} \label{qIV}
		\begin{cases}
			\bs u =\bs u(t)\in\K_{\Psi[u](t)}\ \text{for}\ t\in(0,T),\ \bs u(0)=\bs u^0,\\
			\displaystyle\int_{\Omega}\partial_{t}\bs u\cdot(\bs w-\bs u)(t)
			+\nu\int_{\Omega}\bD\bs u:\bD(\bs w-\bs u)(t)
			-\displaystyle\int_{\Omega}(\bs u\otimes\bs u):\nabla(\bs w-\bs u)(t)\\
			\displaystyle	\qquad\qquad\ge\int_{\Omega}\bs f\cdot(\bs w-\bs u)(t),\quad\forall\,\bs w\in\K_{\Psi[u](t)},\ \text{a.e.}\ t\in(0,T),
		\end{cases}
	\end{equation}
	with $\K_{\Psi[\bs u](t)}$ defined in \eqref{ConvexSet-t}, replacing $\psi$ by $\Psi[\bs u]$.

	We consider now sufficient conditions to solve the implicit problem by imposing a contractive behaviour to the solution operator of the variational problem. Let now the nonlocal operator $\Psi: \mathscr X=\mathscr V_2\rightarrow\R^+$ be defined, for each $(x,t)\in Q_T$, as in \cite{HR2017}, by 
	\begin{equation}\label{PG}
		\Psi[\bs w](x,t)=\Gamma(\bs w)\varphi(x,t),\end{equation}
	where $\Gamma: \mathscr V_2\rightarrow\R$ is a locally Lipchitz continuous operator satisfying
	\begin{equation}\label{gamma}  
		\begin{cases}
			&\!\! 0<\eta(R)\le\Gamma(\bs w)\le \rho(R)\quad \forall\,\bs w\in B_R,\\
			&\!\!|\Gamma(\bs w_1)-\Gamma(\bs w_2)|\le\gamma(R)\|w_1-w_2\|_{\mathscr V_2}\quad\ \forall\,\bs  w_1,\bs  w_2\in  B_{R},
		\end{cases}
	\end{equation}
	for $R>0$. Here the real functions $\eta,\gamma$ and $\rho$ are strictly positive in $\R$ and $B_R=\{\bs w\in \mathscr V_2: \|\bs w\|_{\mathscr V_2}\le R\}$. The function $\varphi: Q_T\rightarrow\R^+$ satisfies the assumption
	\begin{equation} \label{phi*}
		0<\varphi_*\leq\varphi(x,t)\leq \varphi^* \quad\text{a.e. }\quad (x,t)\in Q_T.
	\end{equation}
	
	Assuming $\bs f\in\bs L^2(Q_T)$ and $\bs u^0\in\bs L^2(\Omega)$, if we set $\bs w=0$ in \eqref{qIV} and, as in the proof of the previous theorem, we use 
	the estimate \eqref{apriori} in the estimate \eqref{apriori_extra} for $t=T$, we obtain also the {\em a priori} estimate \eqref{R1}. Hence we conclude also that any quasi-variational solution $\bs u\in B_1$. Similarly, also choosing zero as test function, we obtain the estimate
	\begin{align*}
		\nu \int_{Q_T}|\bD\bs u|^2&\le\int_{Q_T}\bs f\cdot\bs u+\tfrac12\int_\Omega|\bs u^0|^2\\
		&\le\tfrac\nu{2}\int_{Q_T}|\bD\bs u|^2+\tfrac1{\nu\kappa}\int_{Q_T}|\bs f|^2+\tfrac12\int_\Omega|\bs u^0|^2, 
	\end{align*}
	where $\kappa$ is given in \eqref{apriori2}, and any quasi-variational solution also satisfies the {\em a priori} estimate 
	\begin{equation}\label{novoR*}
		\|\bs u\|_{\mathscr V_2}\le \big(\tfrac2{\nu^2\kappa}\|\bs f\|^2_{_2}+\tfrac1{\nu}\|\bs u^0\|^2_{_2}\big)^\frac12=R_2,
	\end{equation}
	and therefore we have {\em a priori} $\bs u\in B_{R_*}$, since 
	\begin{equation}\label{R*} 
		\|\bs u\|_{\mathscr V_2}\le R_*=\min \{R_1,R_2\}, 
	\end{equation}
	where $R_1$ is defined in $\eqref{R1}$.

	\begin{theorem} \label{thm5.2} Let $\bs f\in\bs L^2(Q_T)$, $\bs u^0\in\mathscr K_{\Psi[\bs u^0]}$ and let $\Psi[\bs u]$ be the functional defined by \eqref{PG} under the assumptions \eqref{gamma} and \eqref{phi*}. If  $\varphi\in C\big([0,T];L^\infty(\Omega)\big)$ (respectively $\varphi\in W^{1,\infty}\big(0,T;L^\infty(\Omega)\big)$), and
		\begin{equation}
			\label{contra}
			\tfrac{\gamma_*}{\eta_*\sqrt \nu}\Big[\sqrt \nu R_*+\big[\big(e^{BT}-1\big)\big(\| A+\|\bs u^0\|_{_2}^2\big)+A\big]^\frac12 \Big] <1,
		\end{equation}
		with $\gamma_*=\gamma(R_*)$ and $\eta_*=\eta(R_*)$, where
		\begin{equation}\label{ab1}
			A=\|\bs f\|_{_2}^2+4d^2M^4_*\,
			\tfrac{\rho_*^2}{\nu\eta_*^2}T|\Omega|
		\end{equation}
		\begin{equation}\label{ab2}
			B=1+\tfrac{4d}{\nu}M^2_*
		\end{equation}

		\noindent and $M_*$ is defined as in \eqref{infty}, with $\psi^*=\rho_*\varphi^*$, $\rho^*=\rho(R_*)$,
		then the implicit  Lagrange multiplier problem \eqref{qvimlfraco} (resp. \eqref{qvimlforte}) has a weak solution $(\lambda,\bs u)\in  L^\infty(Q_T)'\times\big(L^\infty(0,T;\mathscr V_p)\cap C\big([0,T];\bs L^p(\Omega)\big)$, for all $1<p<\infty$ (resp. a strong solution $(\lambda,\bs u)\in L^\infty(Q_T)'\times(L^\infty(0,T;\mathscr V_p)\cap H^1\big(0,T;\bs L^2(\Omega)\big)\cap C\big([0,T];\bs L^p(\Omega)\big)$, for all $1<p<\infty$), being  $\bs u$ the unique weak (resp. strong) solution of the quasi-variational inequality \eqref{qIVf} (resp. \eqref{qIV}). 
	\end{theorem}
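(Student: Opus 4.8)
The plan is to solve the implicit problems by a Banach contraction applied to the solution map of the \emph{variational} problem, as announced before the statement. For $\bs v\in B_{R_*}\subset\mathscr V_2$ I let $S_1(\bs v)$ be the unique solution of the variational inequality \eqref{IVf} (resp.\ the strong inequality \eqref{IV}) with threshold $\psi=\Psi[\bs v]=\Gamma(\bs v)\varphi$, whose existence, uniqueness and regularity are provided by Theorem~\ref{Ew} and Theorem~\ref{1}, the corresponding multipliers by Theorem~\ref{tmlfraco} (resp.\ Theorem~\ref{tmlforte}). The \emph{a priori} bound \eqref{R*} already gives $S_1(B_{R_*})\subset B_{R_*}$, and the admissibility of $\bs u^0$ for every threshold $\Gamma(\bs v)\varphi$, $\bs v\in B_{R_*}$, follows from the lower bound $\Gamma(\bs v)\ge\eta_*$ in \eqref{gamma} together with the hypothesis $\bs u^0\in\mathscr K_{\Psi[\bs u^0]}$. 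Everything then reduces to showing that $S_1$ is a contraction, which is exactly the content of \eqref{contra}.

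The decisive point, and the main obstacle, is that the generic continuous dependence estimate \eqref{FF} is useless here: with $\psi_i=\Gamma_i\varphi$ it only yields $\|\bs u_1-\bs u_2\|\lesssim\|\psi_1-\psi_2\|_\infty^{1/2}\lesssim|\Gamma_1-\Gamma_2|^{1/2}$, a H\"older-$\tfrac12$ bound that can never contract. I must instead produce a genuinely \emph{Lipschitz} estimate, and for this I exploit the homogeneity of the constraint. Writing $\Gamma_i=\Gamma(\bs v_i)$ and rescaling $\bs U_i=\bs u_i/\Gamma_i$, the key observation is that $\Gamma_i$ is a \emph{constant} (a functional of the whole trajectory, independent of $t$), so $|\bD\bs U_i|\le\varphi$ places \emph{both} $\bs U_1,\bs U_2$ in the single fixed convex set $\mathscr K_\varphi$, and $\bs U_i$ solves the same inequality with lower order data rescaled to $\Gamma_i^{-1}\bs f$, convection coefficient $\Gamma_i$, and initial value $\Gamma_i^{-1}\bs u^0$. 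Since $\bs U_1,\bs U_2$ now live in one and the same convex set, I may insert $\bs U_2$ (resp.\ $\bs U_1$) as test function in the inequality for $\bs U_1$ (resp.\ $\bs U_2$); adding the two, the time-derivative and viscosity terms collapse \emph{without any scaling error} into $-\tfrac12\tfrac{d}{dt}\|\bs U_1-\bs U_2\|_2^2-\nu\|\bD(\bs U_1-\bs U_2)\|_2^2$. This is precisely what the direct scaled test functions $\tfrac{\Gamma_1}{\Gamma_2}\bs u_2,\tfrac{\Gamma_2}{\Gamma_1}\bs u_1$ fail to achieve, since there the asymmetry of $\partial_t$ leaves a residual linear in $\Gamma_1-\Gamma_2$ that cannot be absorbed.

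All the $\Gamma$-dependence is thereby pushed into the lower order terms. Setting $\bs V=\bs U_1-\bs U_2$, the convection splits as $\Gamma_1(\bs V\otimes\bs U_2,\nabla\bs V)+(\Gamma_1-\Gamma_2)(\bs U_2\otimes\bs U_2,\nabla\bs V)$, the term $(\bs U_1\otimes\bs V,\nabla\bs V)$ vanishing by $\nabla\!\cdot\bs U_1=0$, while the force contributes $(\Gamma_2^{-1}-\Gamma_1^{-1})(\bs f,\bs V)$. Each genuinely new parcel carries a factor $\nabla\bs V$ or $\bs V$, so Young's inequality absorbs a $\tfrac\nu2\|\bD\bs V\|_2^2$ and leaves sources that are \emph{quadratic} in $\Gamma_1-\Gamma_2$; this quadratic gain, impossible without the homogeneity, is exactly what upgrades the H\"older-$\tfrac12$ to a Lipschitz bound. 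The cross term $(\bs V\otimes\bs U_2,\nabla\bs V)$ and the $\tfrac12\|\bs V\|_2^2$ from the force produce the Gr\"onwall constant $B=1+\tfrac{4d}\nu M_*^2$ of \eqref{ab2}, $M_*$ being the $\bs L^\infty$ bound \eqref{infty}; the quadratic convective and forcing sources, together with the quadratic initial mismatch $\|\bs V(0)\|_2^2=|\Gamma_1^{-1}-\Gamma_2^{-1}|^2\|\bs u^0\|_2^2$, supply the constant $A$ of \eqref{ab1} and the additive $\|\bs u^0\|_2^2$. Gr\"onwall's inequality then bounds the normalized difference $\bs V/(\Gamma_1-\Gamma_2)$ in the energy norm by the quantity $E=(e^{BT}-1)(A+\|\bs u^0\|_2^2)+A$ appearing in \eqref{contra}.

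Finally I would undo the rescaling through $\bD(\bs u_1-\bs u_2)=\Gamma_2\bD\bs V+(\Gamma_1-\Gamma_2)\bD\bs U_1$, using the \emph{a priori} bounds $\|\bD\bs U_i\|_2\le R_*/\eta_*$ (from \eqref{R*} and $\Gamma_i\ge\eta_*$): the first summand yields the $E^{1/2}$ contribution and the second the $\sqrt\nu R_*$ contribution inside the bracket of \eqref{contra}. The local Lipschitz property $|\Gamma_1-\Gamma_2|\le\gamma_*\|\bs v_1-\bs v_2\|_{\mathscr V_2}$ from \eqref{gamma}, with $\gamma_*=\gamma(R_*)$, $\eta_*=\eta(R_*)$, then converts this into $\|S_1(\bs v_1)-S_1(\bs v_2)\|_{\mathscr V_2}\le L\,\|\bs v_1-\bs v_2\|_{\mathscr V_2}$ with $L$ equal to the left-hand side of \eqref{contra}. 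Thus \eqref{contra} is precisely the contraction condition, Banach's theorem furnishes the unique fixed point $\bs u=S_1(\bs u)$ solving the quasi-variational inequality \eqref{qIVf} (resp.\ \eqref{qIV}), and pairing it with $\lambda=\lambda(\bs u)$ from Theorem~\ref{tmlfraco} (resp.\ Theorem~\ref{tmlforte}) solves \eqref{qvimlfraco} (resp.\ \eqref{qvimlforte}). In the strong case the only addition is that $\varphi\in W^{1,\infty}\big(0,T;L^\infty(\Omega)\big)$ and the time-independence of $\Gamma(\bs u)$ give $\Psi[\bs u]\in W^{1,\infty}\big(0,T;L^\infty(\Omega)\big)$, so \eqref{psi} holds and $\partial_t\bs u\in\bs L^2(Q_T)$ is available throughout.
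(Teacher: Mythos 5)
Your proposal is correct and is essentially the paper's own proof in different coordinates: the paper multiplies the inequality for $\bs u_1$ by $\mu^2$ (with $\mu=\Gamma(\bs v_2)/\Gamma(\bs v_1)$) and compares $\mu\bs u_1$ with $\bs u_2$ inside $\K_{\mu\psi}$, which is exactly your normalization $\bs U_i=\bs u_i/\Gamma_i$ up to the constant factor $\Gamma_2$ (indeed $\bs U_1-\bs U_2=(\mu\bs u_1-\bs u_2)/\Gamma_2$), so the test functions ($\Gamma_1\bs U_2$ and $\Gamma_2\bs U_1$), the energy/Gr\"onwall structure with the cross term absorbed by incompressibility, the splitting of $\bD(\bs u_1-\bs u_2)$ into a rescaling error plus the normalized difference, and the resulting contraction condition all coincide with the paper's. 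The only divergence is bookkeeping of where the bounds $\eta_*\le\Gamma_i\le\rho_*$ are inserted (done exactly, your sources reproduce the paper's $A$ and $B$, or better), which affects the cosmetic form of intermediate constants but not the argument.
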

	\begin{proof}
		In this proof we extend the argument of \cite{RodriguesSantos2019} by estimating the additional nonlinear convection term and imposing sufficient conditions for a solution map $S$ to be a contraction, so that the Banach fixed point theorem applies.
		
		Suppose first $\varphi\in W^{1,\infty}\big(0,T;L^\infty(\Omega)\big)$ and define $S:B_{R_*}\rightarrow \mathscr V_2$ by $S(\bs v)=S(\bs f,\bs u^0,\Psi[\bs v])=\bs u$, being $\bs u$ the unique strong solution of the variational inequality \eqref{IV} with data $\bs f$, $\bs u^0$ and $\psi=\Psi[\bs v]$.  
		Note that $S(B_{R_*})\subset B_{R_*}$, by estimate \eqref{R*}.
		
		For $i=1,2$, let $\bs v_i\in B_{R_*}$ and $\bs u_i=S(\bs f,\bs u^0,\Psi[\bs v_i])$.  Set $\mu=\frac{\Gamma(\bs v_2)}{\Gamma(\bs v_1)}$, assuming without loss of generality that $\mu>1$. Denoting $\psi=\Psi[\bs v_1]=\Gamma(\bs v_1)\varphi$, we have $\Psi[\bs v_2]=\Gamma(\bs v_2)\varphi=\mu \psi$.  
		
		Noting that
		\begin{equation}\label{dividir}
			\|S(\bs v_1)-S(\bs v_2)\|_{\mathscr V_2}=\|\bD(\bs u_1-\bs u_2)\|_2\le\|\bD(\bs u_1-\mu\bs u_1)\|_2+\|\bD(\mu\bs u_1-\bs u_2)\|_2,
		\end{equation}
		the first term is easily estimated by
		\begin{equation}\label{u1}
			\|\bD(\bs u_1-\mu\bs u_1)\|_{_2}=(\mu-1)\|\bD\bs u_1\|_{_2}\le R_*\frac{\gamma_*}{\eta_*}\|\bD(\bs v_1-\bs v_2)\|_{_2}
		\end{equation}
		since we have, by \eqref{gamma} with $\gamma_*=\gamma(R_*)$ and $\eta_*=\eta(R_*)$,
		\begin{equation} \label{mu-1}
			\mu-1=\frac{\Gamma(\bs v_2)- \Gamma(\bs v_1)}{\Gamma(\bs v_1)}\le\frac{\gamma_*}{\eta_*}\|\bD(\bs v_1-\bs v_2)\|_{_2}.
		\end{equation}
		
		In order to estimate the second term of the right hand side of \eqref{dividir}, we multiply both sides of the variational inequality \eqref{IV} for $\bs u_1$ by $\mu^2$, and, as $\bs u_2(t)\in\K_{\mu \psi(t)}$, we may choose $\bs w(t)=\frac1\mu\bs u_2(t)\in\K_{\psi(t)}$, for a.e. $t\in (0,T),$ to get
		\begin{multline*}
			\int_\Omega\partial_t(\mu\bs u_1)\cdot(\mu\bs u_1-\bs u_2)(t)+\nu\int_\Omega \bD(\mu \bs u_1):\bD(\mu\bs u_1-\bs u_2)(t)\\
			-\int_\Omega(\mu\bs u_1\otimes\mu\bs u_1:\nabla(\mu\bs u_1-\bs u_2)(t)
			\le\int_\Omega\mu\bs f\cdot(\mu\bs u_1-\bs u_2)(t)\\
			+(\mu^2-\mu) \int_\Omega(\bs u_1\otimes\bs u_1):\nabla(\mu\bs u_1-\bs u_2)(t).
		\end{multline*}
		As $\bs u_2=S(\bs f,\bs u^0,\mu \psi)$ and $\mu\bs u_1(t)\in\K_{\mu\psi(t)}$, we also have
		\begin{multline*}
			\int_\Omega\partial_t\bs u_2\cdot(\mu\bs u_1-\bs u_2)(t)+\nu\int_\Omega \bD( \bs u_2):\bD(\mu\bs u_1-\bs u_2)(t)\\
			-\int_\Omega(\bs u_2\otimes\bs u_2):\nabla(\mu\bs u_1-\bs u_2)(t)\ge \int_\Omega\bs f\cdot(\mu\bs u_1-\bs u_2)(t).
		\end{multline*}
		
		From these two inequalities, for $\bs \zeta=\mu\bs u_1-\bs u_2$, we obtain   
		\begin{align}\label{aux}
			\tfrac12\tfrac{d\ }{dt}	\int_\Omega|\bs \zeta(t)|^2+\nu\int_\Omega|\bD\bs \zeta(t)|^2
			&\le(\mu-1)\int_\Omega\bs f(t)\cdot\bs \zeta(t)+\Xi(t)+\Upsilon(t)\\
			\nonumber &\le \tfrac12\int_\Omega |\bs \zeta(t)|^2+\tfrac12(\mu-1)^2\int_\Omega|\bs f(t)|^2+\Xi(t)+\Upsilon(t)\\
			\nonumber      &\le\tfrac\nu2\int_\Omega|\bD\bs\zeta(t)|^2+(\tfrac12+\tfrac{2d}\nu M_*^2)\int_\Omega|\bs\zeta(t)|^2+(\mu-1)^2A_*(t),
		\end{align}
		with $A_*(t)=\frac12\|\bs f(t)\|^2_{_2}+\frac{2\rho_*}{\nu\eta_*^2}M^4_*|\Omega|d^2$, since
		\begin{align*}
			\Xi(t)&=\mu(\mu-1)\int_\Omega(\bs u_1\otimes\bs u_1):\nabla\bs \zeta(t)\le(\mu-1)\tfrac{\rho_*}{\eta_*}\|(\bs u_1\otimes\bs u_1)(t)\|_{_2}\sqrt 2\|\bD\bs\zeta(t)\|_{_2}\\
			&\le\tfrac\nu4\|\bD\bs\zeta(t)\|_{_2}^2+\tfrac{(\mu-1)^2}\nu\,\tfrac{2\rho^2_*}{\eta_*^2}M^4_*|\Omega|d^2
		\end{align*}
		and
		\begin{align*}
			\Upsilon(t)&=\int_\Omega\big((\mu\bs u_1\otimes\mu\bs u_1)-(\bs u_2\otimes\bs u_2):\nabla\bs\zeta\big)(t)=\int_\Omega(\bs u_2\otimes\bs\zeta):\nabla\bs\zeta(t)\\
			&\le\sqrt2\|(\bs u_2\otimes\bs\zeta)(t)\|_{_2}\|\bD\bs\zeta(t)\|_{_2}\le\sqrt{2d}M_*\|\bs\zeta(t)\|_{_2}\|\bD\bs\zeta(t)\|_{_2}\\
			&\le\tfrac\nu4\|\bD\bs\zeta(t)\|_{_2}^2+\tfrac{2d}\nu M_*^2\|\bs\zeta(t)\|_{_2}^2.
		\end{align*}
		
		Noting that $\bs\zeta(0)=(\mu-1)\bs u^0$, setting $ y(t)=\|\zeta(t)\|_{_2}^2$ and integrating in $t$, from \eqref{aux} we obtain
		$$y(t)\le\big(\mu-1)^2(A+\|\bs u^0\|_{_2}^2\big)+B\int_0^t y(s)ds,$$
		where $A$ is given by \eqref{ab1} and $B$ by \eqref{ab2}. By Gr\"onwall integral inequality we have
		$$\int_0^ty(s)ds\le (\mu-1)^2\big(A+\|\bs u^0\|_{_2}^2)\big(e^{Bt}-1\big)/B.$$
		
		Integrating \eqref{aux} between $0$ and $T$ and using this estimate, we deduce
		$$\|\bD\bs\zeta\|_{_2}^2\le\tfrac{(\mu-1)^2}\nu\Big[\big(A+\|\bs u^0\|_{_2}^2)\big(e^{BT}-1\big)+A\Big]\le\tfrac{\gamma_*^2}{\nu\eta_*^2}\Big[\big(A+\|\bs u^0\|_{_2}^2)\big(e^{BT}-1\big)+A\Big]\|\bs v_1-\bs v_2\|^2_{\mathscr V_2}.$$
		From \eqref{dividir} and \eqref{u1}, this yields the contraction
		$$\|S(\bs v_1)-S(\bs v_2\|_{\mathscr V_2}\le\tfrac{\gamma_*}{\eta_*\sqrt\nu}\left[\sqrt\nu\,R_*+\Big[\big(A+\|\bs u^0\|_{_2}^2)\big(e^{BT}-1\big)+A\Big]^\frac12\right]\|\bs v_1-\bs v_2\|_{\mathscr V_2},$$
		provided \eqref{contra} holds.
	\end{proof}

	\begin{example} 
		
		Let $\Psi:\mathscr V_2\rightarrow\R$ be defined by
		$$\Psi[\bs w](x,t)=\varphi(x,t)\big(\alpha+\beta\int_{Q_T}|\bD\bs w|^2\big),\quad (x,t)\in Q_T,$$
		where $\alpha,\beta>0$.
		
		We can apply directly Theorem \ref{thm5.2}, by observing that in $B_{R_*}\subset \mathscr V_2$
		$$0<\eta(R_*)=\alpha\le \Gamma(\bs w)=\alpha+\beta\int_{Q_T}|\bD\bs w|^2\le\alpha+\beta R_*^2=\rho(R_*)$$
		and
		\begin{align*}
			|\Gamma(\bs w_1)-\Gamma(\bs w_2)|&=\beta\Big|\int_{Q_T}|\bD\bs w_1|^2-|\bD\bs w_2|^2\Big|=\beta\Big|\int_{Q_T}(\bD\bs w_1-\bD\bs w_2)\cdot(\bD\bs w_1+\bD\bs w_2)\Big|\\
			&\le 2\beta R_*\|\bs w_1-\bs w_2\|_{\mathscr V_2}= \gamma(R_*)\|\bs w_1-\bs w_2\|_{\mathscr V_2}.
		\end{align*}
		
	\end{example}
	
	\begin{remark}
		Under the assumptions of Theorem \ref{thm5.2}, if we take any $\bs v_0 \in B_{R_*}$ and we consider $\bs v_n=S(\bs v_{n-1})$, for $n\in \N$, the sequence $\{\bs v_n\}$ converges strongly in $\mathscr V_2$ to the fixed point $\bs u=S(\bs u)$, which is the unique weak (resp. strong) quasi-variational solution corresponding to $\varphi\in C\big([0,T];L^\infty(\Omega)\big)$ (respectively $\varphi\in W^{1,\infty}\big(0,T;L^\infty(\Omega)\big)$).
	\end{remark}
	
	\section*{Acknowledgements}

	\begin{small}
		The research of both authors  were partially financed by Portuguese Funds through FCT (Fundação para a Ciência e a Tecnologia) within the Projects 
		UID/04561/2025 - https://doi.org/10.54499/UID/04561/2025 (JFR) and UID/00013/2025 - https://doi.org/10.54499/UID/00013/2025 (LS), respectively. 
	\end{small}

	\vspace{5mm}

\end{document}